\newtheorem{satz}{Theorem}[section]
\newtheorem{thm}[satz]{Theorem}
\newtheorem{lemma}[satz]{Lemma}
\newtheorem{kor}[satz]{Corollary}
\newtheorem{prop}[satz]{Proposition}
\theoremstyle{definition}
\newtheorem{Def}[satz]{Definition}
\theoremstyle{remark}
\newtheorem{bem}[satz]{Remark}
\newtheorem{bsp}[satz]{Example}
\newcommand{\dq}{\textrm{\textit{\dj}} }                                           
\newcommand{\varf}[2]{#1_1, \ldots, #1_{#2}}                              
\newcommand{\vara}[2]{#1_1 + \ldots + #1_{#2}}                            
\newcommand{\<}[1]{\langle #1 \rangle}                                    
\newcommand{\op}{OP}                                             
\newcommand{\e}{\varepsilon}                                              
\DeclareMathOperator{\ad}{ad}                                             
\newcommand{\supp}{\textrm{supp }}                                        
\newcommand{\adc}[2]{ \ad(-ix)^{#1_1}\ad(D_x)^{#2_1}\ldots \ad(-ix)^{#1_l}\ad(D_x)^{#2_l} }  
\newcommand{\Sallg}[5]{S^{#1}_{#2,#3}(\R^{#4}\times\R^{#5})}              
\newcommand{\Sallgn}[6]{S^{#1}_{#2,#3}(\R^{#4}\times\R^{#5}; #6)}         
\newcommand{\Sn}[3]{\Sallg{#1}{#2}{#3}{n}{n}}                             
\newcommand{\Snn}[4]{S^{#1}_{#2,#3}(\R^{n}\times\R^{n}; #4)}              
\newcommand{\pa}[1]{\partial_{\xi}^{#1}}                                  
\newcommand{\p}{\partial}
\newcommand{\pax}[2]{\partial_{\xi}^{#1}\partial_{x}^{#2}}                
\newcommand{\skh}[3]{{\langle #1,#2 \rangle}_{#3}}                        
\newcommand{\con}{C^{\infty}_c(\mathbb{R}^n)}                             
\newcommand{\R}{\mathbb{R}}                                               
\newcommand{\Rn}{\mathbb{R}^n}                                            
\newcommand{\RnRn}{\R^n \times \R^n}                                      
\newcommand{\RnRnx}[2]{\R^n_{#1} \times \R^n_{#2}}                         
\newcommand{\RnRnRn}{\RnRn \times \R^n}                                   
\newcommand{\RnRnRnRn}{\RnRn \times \RnRn}                                
\newcommand{\intr}{\int \limits_{\Rn} }                                   
\newcommand{\osint}{\textrm{Os\hspace*{0,1cm}-}\hspace{-0,15cm}\iint}     
\newcommand{\osiint}{\textrm{Os\hspace*{0,1cm}-}\hspace{-0,15cm}\iiiint}  
\newcommand{\s}{\mathcal{S}(\R^n)}                                        
\newcommand{\sindo}[1]{\mathcal{S}(\R^n_{#1}) }                           
\newcommand{\snn}{\mathcal{S}(\RnRn)}                                     
\newcommand{\sd}{\mathcal{S'}(\R^n)}                                      
\newcommand{\N}{\mathbb{N}}                                               
\newcommand{\Z}{\mathbb{Z}}                                               
\newcommand{\Non}{\mathbb{N}_0^n}                                               
\newcommand{\C}{\mathbb{C}}                                               
\title{Characterization of Non-Smooth Pseudodifferential Operators}
\author{Helmut Abels and Christine Pfeuffer}
\begin{document}

\maketitle

\begin{abstract}
  Smooth pseudodifferential operators on $\Rn$ can be characterized by their mapping properties between $L^p-$Sobolev spaces due to Beals and Ueberberg.
  In applications such a characterization would also be useful in the non-smooth case, for example to show the regularity of solutions of a partial differential equation.  
  Therefore, we will show that every linear operator $P$, which satisfies some specific continuity assumptions, is a non-smooth pseudodifferential operator of the symbol-class $C^{\tau} S^m_{1,0}(\mathbb{R}^n \times \mathbb{R}^n)$. 
  The main new difficulties are the limited mapping properties of pseudodifferential operators with non-smooth symbols.
\end{abstract}

\section{Introduction}

In the smooth case some characterizations of pseudodifferential operators are already proved e.g.\;for the Hörmander class $\Sn{m}{\rho}{\delta}$. Here a smooth function $p$ is an element of the symbol-class $\Sn{m}{\rho}{\delta}$ with $m \in \R$ and $0 \leq \delta \leq \rho \leq 1$ if and only if
\begin{align*}
  |p|^{(m)}_k := \max_{|\alpha|,|\beta|\leq k} \sup_{x, \xi \in \R^n}|\pax{\alpha}{\beta} p(x,\xi)|\<{\xi}^{-(m-\rho|\alpha|+\delta|\beta|)} < \infty
\end{align*}
holds for all $k \in \N_0$. The associated pseudodifferential operator is defined by
\begin{align*}
		\op (p)(x):= p(x,D_x) u(x) := \intr e^{ix \cdot \xi} p(x,\xi) \hat{u}(\xi) \dq \xi \qquad \text{for all } u \in \s , x \in \Rn.
\end{align*}
Here $\s$ denotes the Schwartz space, the space of all rapidly decreasing smooth functions and $\hat{u}$ is the Fourier transformation of $u$. We also denote $\hat{u}$ via $\mathscr{F}[u]$. The set of all pseudodifferential operators with symbols in the symbol-class $\Sn{m}{\rho}{\delta}$ is denoted via $\op \Sn{m}{\rho}{\delta}$. \\

In 1977 Beals \cite{Beals} proved a characterization of smooth pseudodifferential operators, for example of the Hörmander class $\Sn{m}{\rho}{\delta}$ with $0 \leq \delta \leq \rho \leq 1$ and $\delta<1$. 
Later Ueberberg \cite{Ueberberg} generalized this characterization for $L^p-$Sobolev spaces for pseudodifferential operators of the Hörmander class $\Sn{m}{\rho}{\delta}$ with $0 \leq \delta \leq \rho \leq 1$ and $\delta<1$.
In the literature there are some other characterizations in the smooth case, e.g.\;\cite{Kryakvin}, \cite{SchroheLeopold1993} or \cite{Schrohe1990}. But the most important one for this section is that one of Ueberberg, cf.\;\cite{Ueberberg}. It is based on the method for characterizing algebras of pseudodifferential operators developed by Beals \cite{Beals}, \cite{Beals3},  Coifman, Meyer \cite{CoifmanMeyer} and Cordes \cite{Cordes1}, \cite{Cordes2}. Since non-smooth 
pseudodifferential operators are used in the regularity theory for partial differential equations, such a characterization 
is also useful in 
the non-smooth case. We use the main ideas of the characterization of Ueberberg in the smooth case, cf.\;\cite{Ueberberg}, in order to derive a characterization for non-smooth pseudodifferential operators of the symbol-class $p \in C^{\tau}_{\ast} \Snn{m}{\rho}{0}{M}$ with $\rho \in \{ 0,1 \}$. Here the Hölder-Zygmund space $C^{\tau}_{\ast}(\Rn)$, $\tau >0$, is defined by
\begin{align*}
  C^{\tau}_{\ast}(\Rn):=\left\{ f \in \sd: \|f\|_{C^{\tau}_{\ast} }:= \sup_{j \in \N_0} 2^{js} \|\mathscr{F}^{-1}[\varphi_j \hat{f}] \|_{L^{\infty}} < \infty \right\},
\end{align*}
where $\mathscr{F}^{-1}[u]$ is the inverse Fourier Transformation of $u \in \sd$, the dual space of $\s$. Moreover a function $p: \RnRn \rightarrow \C$ is an element of the symbol-class $ C^{\tau}_{\ast} \Snn{m}{\rho}{0}{M}$ with $m \in \R$, $\tau>0$, $0 \leq \rho \leq 1$ and $M\in \N_0 \cup \{ \infty \}$ if and only if
  \begin{itemize}

		\item[i)] $\p_x^{\beta} p(x, .) \in C^M(\Rn)$ for all $x \in \Rn$,

		\item[ii)] $\p_x^{\beta} \pa{\alpha} p \in C^{0}(\R^n_x \times \R^n_{\xi})$,

		\item[iv)] $\| \pa{\alpha} p(.,\xi)  \|_{C^{\tau}_{\ast}(\R^n)} \leq C_{\alpha}\<{\xi}^{\tilde{m}-\rho|\alpha|}$ for all $\xi \in \R^n$

	\end{itemize}
holds for all $\alpha, \beta \in \N_0^n$ with $|\alpha| \leq M$ and $|\beta| \leq m$. The associated pseudodifferential operator $p(x,D_x)$, also denoted by $\op(p)$, to such a symbol $p$ is defined in the same way as in the smooth case. The set of all pseudodifferential operators with symbols in the symbol-class $C^{\tau}_{\ast} \Snn{m}{\rho}{0}{M}$ is denoted via $\op C^{\tau}_{\ast} \Snn{m}{\rho}{0}{M}$.\\

Motivated by the characterization of Ueberberg \cite{Ueberberg} and Beals \cite{Beals} in the smooth case, we define the following set of operators:

\begin{Def}
	Let $m\in\R$, $M \in \N_0 \cup \{ \infty \}$ and $ 0\leq \rho \leq 1 $. Additionally let $\tilde{m} \in \N_0 \cup \{ \infty \}$ and $1 < q < \infty$. Then we define 
	$\mathcal{A}^{m,M}_{\rho,0}(\tilde{m},q)$
	as the set of all linear and bounded operators $P: H^m_q(\Rn) \rightarrow L^q(\Rn)$, such that for all $l\in\N$, $\alpha_1, \ldots, \alpha_l \in \Non$ and $ \beta_1, \ldots, \beta_l \in \N_0^n$ with $|\alpha_1| + |\beta_1|= \ldots = |\alpha_l| + |\beta_l|= 1$, $|\alpha| \leq M$ and $|\beta|\leq \tilde{m}$ the iterated commutator of $P$
	\begin{eqnarray*}
		\adc{\alpha}{\beta} P: H^{m-\rho|\alpha|}_q(\Rn) \rightarrow L^q(\Rn)
	\end{eqnarray*}
	is continuous. Here $\alpha:= \alpha_1 + \ldots + \alpha_l$ and $\beta:=\beta_1 + \ldots + \beta_l$. 
\end{Def}

For the definition of the iterated commutators, see Definition \ref{Def:IteratedCommutators} below. In the case $M = \infty$ we write $\mathcal{A}^{m}_{\rho,0}(\tilde{m},q)$ instead of $\mathcal{A}^{m,\infty}_{\rho,0}(\tilde{m},q)$.\\

Choosing $M=\tilde{m}=\infty$ the proof of the characterization in the smooth case, cf.\;\cite[Chapter 3]{Ueberberg} provides that each $T \in \mathcal{A}^{m,\infty}_{\rho,0}(\infty,q)$ is a smooth pseudodifferential operator of the class $\Sn{m}{\rho}{0}$. But we even get more: Smooth pseudodifferential operators of the symbol-class $ \Sn{m}{\rho}{0}$ are elements of $\mathcal{A}^{m}_{\rho,0}(\infty,q)$ due to Remark \ref{bem:SymbolOfIteratedCommutator} and Theorem \ref{thm:stetigInBesselPotRaum}. Thus we have $\mathcal{A}^{m}_{\rho,0}(\infty,q) = \op\Sn{m}{\rho}{0}$. In the case $\tilde{m} \neq \infty$ we obtain a similar result: Non-smooth pseudodifferential operators of the class $C^{\tau}_{\ast} S^m_{\rho, 0}(\RnRn)$ with $\rho \in \{ 0,1 \}$ are elements of such sets:

\begin{bsp}\label{bsp:ElementDerCharakterisierungsmenge}
  Let $\tau > 0$, $\tau \notin \N$, $m \in \R$ and $\rho \in \{ 0, 1 \}$. Considering a non-smooth symbol $p \in C^{\tau}_{\ast} \Sn{m}{\rho}{0}$ we get for $\tilde{m}:= \max\{ k \in \N_0: \tau-k>n/2 \}$ and $1 < q < \infty$:
  \begin{enumerate}
    \item[i)] $p(x, D_x) \in \mathcal{A}^{m+n/2}_{0,0}(\lfloor \tau \rfloor,2)$ if $\rho = 0$,
    \item[ii)] $p(x, D_x) \in \mathcal{A}^{m}_{0,0}(\tilde{m},2)$ if $\rho = 0$,
    \item[iii)] $p(x, D_x) \in \mathcal{A}^m_{1,0}(\lfloor \tau \rfloor ,q)$ if $\rho = 1$.
  \end{enumerate}
\end{bsp}

As in the smooth case the characterization of non-smooth pseudodifferential operators of the symbol-class $C^{\tau}_{\ast} \Snn{m}{1}{0}{M}$ is reduced to the characterization of those ones of the symbol-class $C^{\tau}_{\ast} \Snn{m}{0}{0}{M}$. To this end the following property of the set $\mathcal{A}^{m,M}_{\rho,0}(\tilde{m},q)$ is needed:

\begin{lemma} \label{lemma:setA}
	Let $m\in\R$, $M \in \N_0 \cup \{ \infty \}$ and $ 0\leq \rho_1 < \rho_2 \leq 1 $. Furthermore, let $\tilde{m} \in \N_0$ and $1 < q < \infty$. Then
	$$\mathcal{A}^{m, M}_{\rho_2,0}(\tilde{m},q) \subseteq \mathcal{A}^{m, M}_{\rho_1,0}(\tilde{m},q).$$
\end{lemma}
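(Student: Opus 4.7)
The plan is to reduce the inclusion to the elementary observation that Bessel potential spaces are monotone in their smoothness index. Concretely, let $P \in \mathcal{A}^{m,M}_{\rho_2,0}(\tilde{m},q)$ be given. The base continuity $P : H^m_q(\Rn) \to L^q(\Rn)$ is identical in the two sets $\mathcal{A}^{m,M}_{\rho_1,0}(\tilde{m},q)$ and $\mathcal{A}^{m,M}_{\rho_2,0}(\tilde{m},q)$, so nothing has to be done for the zeroth-order condition. What has to be checked is only the mapping property of each iterated commutator.

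First I would fix $l \in \N$ and multi-indices $\alpha_1,\beta_1,\ldots,\alpha_l,\beta_l \in \Non$ with $|\alpha_j|+|\beta_j|=1$, $|\alpha|\leq M$ and $|\beta|\leq \tilde{m}$, where $\alpha=\alpha_1+\ldots+\alpha_l$ and $\beta=\beta_1+\ldots+\beta_l$. By hypothesis the iterated commutator
\begin{equation*}
\adc{\alpha}{\beta} P : H^{m-\rho_2|\alpha|}_q(\Rn) \longrightarrow L^q(\Rn)
\end{equation*}
is bounded. Since $\rho_1<\rho_2$ and $|\alpha|\geq 0$, we have $m-\rho_1|\alpha| \geq m-\rho_2|\alpha|$. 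The standard continuous embedding of Bessel potential spaces therefore yields
\begin{equation*}
H^{m-\rho_1|\alpha|}_q(\Rn) \hookrightarrow H^{m-\rho_2|\alpha|}_q(\Rn).
\end{equation*}

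Composing this embedding with the given commutator estimate shows that
\begin{equation*}
\adc{\alpha}{\beta} P : H^{m-\rho_1|\alpha|}_q(\Rn) \longrightarrow L^q(\Rn)
\end{equation*}
is continuous as well, which is exactly what is required for $P\in \mathcal{A}^{m,M}_{\rho_1,0}(\tilde{m},q)$. Since the multi-indices were arbitrary subject to the stated size restrictions, the inclusion follows. There is essentially no serious obstacle here; the only point to be careful about is the direction of the Sobolev embedding, namely that a larger smoothness index gives the smaller space, which is exactly what the inequality $m-\rho_1|\alpha|\geq m-\rho_2|\alpha|$ provides.
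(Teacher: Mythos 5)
Your proof is correct and follows exactly the same route as the paper: the inclusion reduces to the continuous embedding $H^{m-\rho_1|\alpha|}_q(\Rn) \hookrightarrow H^{m-\rho_2|\alpha|}_q(\Rn)$, which is valid since $m-\rho_1|\alpha| \geq m-\rho_2|\alpha|$. You simply spell out the one-line argument in the paper in more detail.
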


  \begin{proof}
    On account of the continuous embedding $H^{m-\rho_1|\alpha|}_q(\Rn) \hookrightarrow H^{m-\rho_2|\alpha|}_q(\Rn)$
    the claim holds.
  \end{proof}

The main goal of this paper is to show that each element of $\mathcal{A}^{m,M}_{1,0}(\tilde{m},q)$ is a non-smooth pseudodifferential operator with coefficients in a Hölder space. This is the topic of Subsection \ref{section:classificationA10}. We will see that $M$ has to be sufficiently large. In analogy to the proof of J.\,Ueberberg in the smooth case one reduces this statement to the following: Each element of the set $\mathcal{A}^{m,M}_{0,0}(\tilde{m},q)$ is a non-smooth pseudodifferential operator with coefficients in a Hölder space. Making use of order reducing pseudodifferential operators we obtain the characterization of non-smooth pseudodifferential operators of arbitrary order $m$ from that. Details for deriving this result are explained in Subsection \ref{section:classificationA00}. 

The first three subsections of Section \ref{Section:Charakterisierung} serve to develop some auxiliary tools needed for the proof of the case $m=0$. In Subsection \ref{section:pointwiseConvergence} we start by showing that a bounded sequence in $C^{\tilde{m}, s} \Snn{0}{0}{0}{M}$ has a subsequence which converges in the symbol-class $C^{\tilde{m}, s} \Snn{0}{0}{0}{M-1}$. Subsection \ref{section:symbolReduction} is devoted to the symbol reduction of non-smooth double symbols to non-smooth single symbols. Details for the third tool are proved in Subsection \ref{section:PropertiesOfTe}. There a family $( T_{\e} )_{\e \in (0,1]}$ fulfilling the following three properties is constructed: $T_{\e}: \sd \rightarrow \s$ is continuous for all $\e \in (0,1]$ and converges pointwise if $\e \rightarrow 0$. Moreover, all iterated commutators of $T_{\e}$ are uniformly bounded with respect to $\e$ as maps from $L^q(\Rn)$ to $L^q(\Rn)$. 

In Section \ref{section:AnwendungCaracterization} we illustrate the usefulness of such a characterization: We show that the composition $PQ$ of two non-smooth 
pseudodifferential operators $P$ and $Q$ is a non-smooth pseudodifferential operator again if $Q$ is smooth enough. This is done by means of the characterization of non-smooth pseudodifferential operators. 

Section \ref{Kapitel: PDO} is devoted to some properties of pseudodifferential operators with single symbols, cf.\,Subsection \ref{Section:PropertiesOfPDO}, and with double symbols, cf.\,Subsection \ref{Section:DoubleSymbols}. 

All notations and basics needed in this paper are introduced in Section \ref{section:Preliminaries}.\\
This paper is based on a part of the PhD-thesis of the second author, cf.\,\cite{Diss} advised by the first author. \\
\textbf{Acknowledgement:} We would like to thank Prof. Dr. Elmar Schrohe for his helpful suggestions to improve this paper.

\section{Preliminaries}\label{section:Preliminaries}

During the whole paper, we consider $n \in \N$ except when stated otherwise. In particular $n \neq 0$. Considering $x \in \R$ we define 
\begin{align*}
  x^+:=\max \{0;x \} \qquad \text{and} \qquad \lfloor x \rfloor:= \max \{k \in \Z : k \leq x \}.
\end{align*}
Moreover $$\<{x}:=(1+|x|^2)^{1/2} \quad \text{for all } x \in \Rn \qquad \text { and } \qquad  \dq \xi:= (2 \pi)^{-n} d \xi.$$
Additionally we scale partial derivatives with respect to a variable $x\in \Rn$ with the factor $-i$ and denote it by 
$$D_x^{\alpha}:= (-i)^{|\alpha|} \p^{\alpha}_x := (-i)^{|\alpha|} \p^{\alpha_1}_{x_1} \ldots \p^{\alpha_n}_{x_n} .$$ 
Here $\alpha =(\alpha_1, \ldots, \alpha_n) \in \Non$ is a \textit{multi-index}. 
For arbitrary $j\in \{1,\ldots, n\}$ we define the $j$-th canonical unit vector $e_j \in \N^n_0$ as $(e_j)_k =1$ if $j=k$ and $(e_j)_k=0$ else. 

In view of two Banach spaces $X,Y$ the set $\mathscr{L}(X,Y)$
consists of all linear and bounded operators $A:X \rightarrow Y$. We also write $\mathscr{L}(X)$ 
instead of $\mathscr{L}(X,X)$. \\

We finally note that the dual space of a topological vector space $V$ is denoted by $V'$.
If $V$ is even a Banach space the duality product $V$ is denoted by $\skh{.}{.}{V; V'}$.

\subsection{Functions on $\Rn$ and Function Spaces}

In this subsection we are going to introduce some function spaces which play a central role during this paper. To begin with, the \textit{Hölder space}  of the order $m \in \N_0$ with Hölder continuity $s \in (0,1]$ is denoted by $C^{m,s}(\Rn)$ and also by $C^{m+s}(\Rn)$ if $s \neq 1$. Note, that $C^{s}(\Rn) = C^s_{\ast}(\Rn)$ if $s \notin \N_0$. For arbitrary $s \in \R$ and $1<p<\infty$ the \textit{Bessel Potential Space} $H^s_p(\Rn)$ is defined by
\begin{align*}
  H^s_p(\Rn):= \{ f \in \sd: \<{D_x}^s f \in L^p(\Rn) < \infty \}
\end{align*}
where $\<{D_x}^s:=\op(\<{\xi}^s) $.\\

Let us mention a characterization of functions in a Bessel potential space needed later on:

\begin{lemma}\label{lemma:CharacterizationOfBesselPotentialSpaces}
  Let $1<p<\infty$, $s < 0$ and $m := -\lfloor s \rfloor$. Then for each $f \in H^s_p(\Rn)$ there are functions $g_{\alpha} \in H^{s-\lfloor s \rfloor}_p(\Rn)$, where $\alpha \in \Non$ with $|\alpha| \leq m$, such that
  \begin{itemize}
    \item $f= \sum \limits_{|\alpha| \leq m} \p^{\alpha}_x g_{\alpha}$, 
    \item $\sum \limits_{|\alpha| \leq m} \| g_{\alpha} \|_{H^{s-\lfloor s \rfloor}_p} \leq C \| f \|_{H^s_p}$,
  \end{itemize}
  where $C$ is independent of $f$ and $g_{\alpha}$.
\end{lemma}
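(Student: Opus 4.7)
The plan is to invert the operator $\<D_x\>^{2m}$ and exploit the fact that $\<\xi\>^{2m}$ is a polynomial in $\xi$ of degree $2m$, since $m=-\lfloor s \rfloor$ is a positive integer. The starting point would be the multinomial expansion
\[
\<\xi\>^{2m} = (1+|\xi|^2)^m = \sum_{|\alpha|\leq m} c_\alpha\, \xi^{2\alpha}, \qquad c_\alpha := \frac{m!}{(m-|\alpha|)!\,\alpha_1!\cdots\alpha_n!} > 0,
\]
which on the operator level reads $\<D_x\>^{2m} = \sum_{|\alpha|\leq m} c_\alpha D_x^{2\alpha}$.

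Next, I would set $h := \<D_x\>^{-2m} f$, which is well-defined since $\<D_x\>^{-2m}$ is an isomorphism $H^s_p(\Rn) \to H^{s+2m}_p(\Rn)$. Splitting each $D_x^{2\alpha}$ as $D_x^\alpha D_x^\alpha$ gives
\[
f = \<D_x\>^{2m} h = \sum_{|\alpha|\leq m} c_\alpha D_x^{\alpha}\bigl( D_x^{\alpha} h \bigr) = \sum_{|\alpha|\leq m} \p_x^{\alpha} g_\alpha,
\]
where $g_\alpha := (-i)^{|\alpha|} c_\alpha D_x^{\alpha} h = (-i)^{|\alpha|} c_\alpha D_x^{\alpha}\<D_x\>^{-2m} f$, using $D_x^{\alpha}=(-i)^{|\alpha|}\p_x^{\alpha}$ to convert to ordinary partial derivatives. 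This produces exactly the decomposition claimed in the lemma.

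The remaining task is to bound $\|g_\alpha\|_{H^{s-\lfloor s\rfloor}_p}$ by $C\|f\|_{H^s_p}$. Unfolding the Bessel potential norm and inserting $\<D_x\>^{-s}\<D_x\>^{s}$, one obtains
\[
\|g_\alpha\|_{H^{s-\lfloor s\rfloor}_p} = c_\alpha\,\bigl\|\<D_x\>^{s-\lfloor s\rfloor} D_x^{\alpha}\<D_x\>^{-2m-s}\<D_x\>^{s} f\bigr\|_{L^p}.
\]
The Fourier multiplier in front of $\<D_x\>^s f$ has symbol of order
\[
(s-\lfloor s\rfloor) + |\alpha| - 2m - s = |\alpha| - m \leq 0 \qquad \text{for } |\alpha|\leq m,
\]
and is therefore bounded on $L^p(\Rn)$ by the Mihlin multiplier theorem (or equivalently as a smooth pseudodifferential operator of order $\leq 0$). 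Summing the resulting estimates $\|g_\alpha\|_{H^{s-\lfloor s\rfloor}_p}\leq C_\alpha\|f\|_{H^s_p}$ over the finitely many multi-indices with $|\alpha|\leq m$ yields the claim. No step presents a substantial obstacle; the only point demanding care is verifying that the total symbol order is indeed non-positive, which hinges precisely on the choice $m=-\lfloor s\rfloor$.
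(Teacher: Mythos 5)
Your proof is correct, and it takes a genuinely different route from the paper. The paper defines $T\varphi = (\partial_x^\alpha\varphi)_{|\alpha|\leq m}$ on $H^{-s}_q$, invokes the bounded inverse theorem to obtain $T^{-1}$ on the range of $T$, and then uses Hahn--Banach to extend the functional $f\circ T^{-1}$ to the full product space, finally unwinding the duality $\bigl(H^{\lfloor s\rfloor-s}_q\bigr)'=H^{s-\lfloor s\rfloor}_p$ to identify the components $g_\alpha$. Your argument instead produces the $g_\alpha$ explicitly from the multinomial identity $\langle\xi\rangle^{2m}=\sum_{|\alpha|\leq m}c_\alpha\,\xi^{2\alpha}$, setting $g_\alpha=(-i)^{|\alpha|}c_\alpha D_x^\alpha\langle D_x\rangle^{-2m}f$, and reduces the norm estimate to $L^p$-boundedness of a Mihlin multiplier of order $|\alpha|-m\leq 0$. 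The computations check out: $(s-\lfloor s\rfloor)+|\alpha|-2m-s=|\alpha|-m$ since $m=-\lfloor s\rfloor$, and the multiplier $\langle\xi\rangle^{s-\lfloor s\rfloor}\xi^\alpha\langle\xi\rangle^{-2m-s}$ is smooth of nonpositive order, hence in $S^0_{1,0}$. Your approach is more elementary and constructive: it avoids Hahn--Banach and the bounded inverse theorem, it defines the $g_\alpha$ as bounded linear maps of $f$ (so the decomposition depends continuously and canonically on $f$, which the existential argument does not provide), and the needed input is just the standard boundedness of zero-order Fourier multipliers on $L^p$. The paper's duality route is the one that generalizes more readily to situations where no explicit parametrix is at hand, but for the Bessel potential scale here your direct construction is cleaner.
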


\begin{proof}
  We define the operator $T: H^{-s}_q(\Rn) \rightarrow \left( H^{\lfloor s \rfloor -s}_q(\Rn) \right)^N$ with $1/p + 1/q =1 $ and $N:= \sharp \{ \alpha \in \Non : |\alpha| \leq m \}$ in the following way: 
  \begin{align*}
    T(\varphi) = ( \p_x^{\alpha}  \varphi )_{|\alpha| \leq m}.
  \end{align*}
  The norm $\| . \|_{X_q}$ of $X_q:=\left( H^{\lfloor s \rfloor -s}_q(\Rn) \right)^N$ is defined by 
  \begin{align*}
    \| f \|_{ X_q } &: = \sum_{i=1}^N \|f_i\|_{H^{\lfloor s \rfloor -s}_q} \qquad  \text{for all }f \in \left( H^{\lfloor s \rfloor -s}_q (\Rn) \right)^N .
  \end{align*}
  By means of an application of the bounded inverse theorem
  we can show the existence of the inverse of $T: H^{-s}_q(\Rn) \rightarrow T(H^{-s}_q(\Rn))=:Y$ and that $T^{-1} \in \mathscr{L}(Y, H^{-s}_q(\Rn))$. For more details see \cite[Proposition 2.49]{Diss}.
  In view of $T^{-1} \in \mathscr{L}(Y,H^{-s}_q(\Rn))$ we get
  \begin{align*}
    |\tilde{f}(g)| 
    = |f \circ T^{-1} (g)| 
    = |\skh{f}{T^{-1} g}{ H^s_p; H^{-s}_q }| 
    \leq \| f \|_{H^s_p} \| T^{-1} g \|_{H^{-s}_q} 
    \leq C \| f \|_{H^s_p} \| g \|_{ X_q}
  \end{align*}
  for all $f \in  H^s_p(\Rn)$ and $g \in Y$.
  An application of the Theorem of Hahn Banach provides the existence of a linear functional $F \in \left( X_q \right)' = \left( H_p^{s-\lfloor s \rfloor} (\Rn)\right)^N$ such that
  \begin{itemize}
    \item[$i)$] $F|_{Y} = \tilde{f}$,
    \item[$ii)$] $|F(g)| \leq C \| f \|_{H^s_p} \| g \|_{ X_q}$ for all $g \in X_q$.
  \end{itemize}
  For arbitrary $\varphi \in \s \subseteq H^{-s}_q(\Rn)$ we can apply property $i)$ on account of $T \varphi \in Y$ and get:
  \begin{align*}
    \langle  \sum_{|\alpha| \leq m} (-1)^{|\alpha|} \p^{\alpha}_x F^{\alpha} , \varphi  \rangle_{H^s_p; H^{-s}_q}
    &= \langle  F, \left( \p^{\alpha}_x \varphi \right)_{|\alpha| \leq m}  \rangle_{ \left( X_q \right)'; X_q} 
    = \langle  F, T \varphi  \rangle_{ \left( X_q \right)'; X_q} \\
    &= \langle  \tilde{f} , T \varphi  \rangle_{ \left( X_q \right)'; X_q}
    = f \circ T^{-1} (T \varphi) 
    = \skh{f}{\varphi}{H^s_p; H^{-s}_q}.
  \end{align*}
  Due to the density of $\s$ in $ H^{-s}_q(\Rn)$
  we obtain 
  \begin{align*}
    \sum_{|\alpha| \leq m} (-1)^{|\alpha|} \p^{\alpha}_x F^{\alpha} = f \qquad \text{in } H^s_p(\Rn).
  \end{align*}
  Additionally we get the claim due to $ii)$:
  \begin{align*}
    \sum_{|\alpha| \leq m} \| (-1)^{|\alpha|}  F^{\alpha} \|_{H^{s-\lfloor s \rfloor}_p}
    = \| F \|_{\left( X_q \right)'}
    = \sup_{ \| g\|_{X_q} \leq 1 } | \skh{F}{g}{ \left( X_q \right)'; X_q} |
    \leq C \| f \|_{H^s_p}.
  \end{align*}
  \vspace*{-1cm}

\end{proof}

In this paper the translation function $\tau_y(g): \Rn \rightarrow \C$, $y \in \Rn$  of $g \in L^1(\Rn)$, is defined for all $ x\in \Rn$ as
    $\tau_y(g)(x):=g(x-y).$

\subsection{Kernel Theorem}

An important ingredient of the characterization is the next kernel theorem:

\begin{thm}\label{thm:SchwartzKernel}
	Every continuous linear operator $T: \sd \rightarrow \s$ has a Schwartz kernel $t(x,y) \in \snn$. Thus for every $u \in \s$ we have
	\begin{align*}
		Tu(x) = \intr t(x,y) u(y) dy \qquad \text{for all } x \in \Rn.
	\end{align*}
\end{thm}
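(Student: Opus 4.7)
The plan is to construct the kernel explicitly as $t(x,y):=(T\delta_y)(x)$, where $\delta_y\in\sd$ denotes the Dirac mass at $y$, and to then verify both the integral representation and $t\in\snn$. Since $\delta_y\in\sd$, the continuity of $T$ gives $T\delta_y\in\s$, so $x\mapsto t(x,y)$ is Schwartz for every fixed $y$. For the integral representation I would write $u=\intr u(y)\,\delta_y\,dy$ as a weak integral in $\sd$ (testing against $\varphi\in\s$ gives $\intr u(y)\varphi(y)\,dy=\sk{u}{\varphi}$); for every fixed $x\in\Rn$ the composition of $T$ with point evaluation at $x$ is a continuous linear functional on $\sd$, and commuting it with the weak integral yields $(Tu)(x)=\intr u(y)\,t(x,y)\,dy$ for every $x\in\Rn$ and $u\in\s$.

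Next I would establish joint smoothness of $t$ on $\RnRn$. The map $y\mapsto\delta_y:\Rn\to\sd$ is $C^\infty$, since $y\mapsto\sk{\delta_y}{\varphi}=\varphi(y)$ is smooth for every $\varphi\in\s$ and this weak smoothness lifts to strong smoothness by reflexivity and sequential completeness of $\sd$, with derivatives characterized by $\sk{\p_y^\beta\delta_y}{\varphi}=(\p^\beta\varphi)(y)$. Composing with the continuous $T$, the map $y\mapsto T\delta_y=t(\cdot,y)$ is $C^\infty$ into $\s$, and since point and derivative evaluations are continuous on $\s$ this gives joint smoothness of $t$ together with the identity $\p_x^\alpha\p_y^\beta t(x,y)=\p_x^\alpha\bigl(T(\p_y^\beta\delta_y)\bigr)(x)$.

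The main obstacle is the simultaneous rapid decay in $x$ and $y$. Fix $N\in\N_0$ and $\beta\in\Non$ and set $g_{y,N,\beta}:=\<{y}^N\p_y^\beta\delta_y\in\sd$. Testing against $\varphi\in\s$ gives $\sk{g_{y,N,\beta}}{\varphi}=\<{y}^N(\p^\beta\varphi)(y)$, which is bounded uniformly in $y\in\Rn$ since $\p^\beta\varphi\in\s$ decays faster than any polynomial; consequently the family $\{g_{y,N,\beta}:y\in\Rn\}$ is weakly, hence by Banach--Steinhaus strongly, bounded in $\sd$. The continuity of $T:\sd\to\s$ maps this bounded set to a bounded subset of $\s$, which gives $\sup_{x,y}\<{x}^M|\p_x^\alpha(Tg_{y,N,\beta})(x)|<\infty$ for every $M\in\N_0$ and $\alpha\in\Non$. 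By linearity $Tg_{y,N,\beta}(x)=\<{y}^N\p_y^\beta t(x,y)$, so this is exactly the estimate $\sup_{x,y}\<{x}^M\<{y}^N|\p_x^\alpha\p_y^\beta t(x,y)|<\infty$, which together with the smoothness step shows $t\in\snn$. The delicate points are the weak-to-strong smoothness and weak-to-strong boundedness in $\sd$; both rely on the reflexivity of $\s$ and the barrelledness of $\sd$, so I would state these carefully before using them.
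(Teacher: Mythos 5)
Your proposal is correct, and it takes a genuinely different route from the paper. The paper's proof is a citation: it invokes the abstract kernel theorem for nuclear spaces (Treves, Theorem 51.6, combined with a result of Amann) together with the nuclearity and conuclearity of $\s$ and $\sd$. You instead give the standard direct argument for this ``very regularizing'' situation: define $t(x,y)=(T\delta_y)(x)$, derive the integral representation, upgrade the weak-$*$ regularity of $y\mapsto\delta_y$ to strong regularity in $\sd$, and read off the Schwartz seminorm bounds from the fact that a continuous linear map sends the strongly bounded family $\{\<{y}^N\p^\beta_y\delta_y : y\in\Rn\}\subseteq\sd$ into a bounded subset of $\s$, yielding $\sup_{x,y}\<{x}^M\<{y}^N|\p_x^\alpha\p_y^\beta t(x,y)|<\infty$. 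The abstract route is shorter but imports heavy machinery; your route is self-contained and concrete at the price of having to get the topological-vector-space lemmas right. Two small bookkeeping remarks on those lemmas. First, the integral representation does not need the Pettis-integral language: by reflexivity of $\s$, the functional $L_x:=\mathrm{ev}_x\circ T\in(\sd)'$ is itself a Schwartz function $\psi_x\in\s$, and then $t(x,y)=L_x(\delta_y)=\psi_x(y)$ and $(Tu)(x)=L_x(u)=\int u(y)\psi_x(y)\,dy$ immediately. Second, the step ``weak-$*$ bounded implies strongly bounded in $\sd$'' relies on barrelledness of the \emph{predual} $\s$ (Banach--Steinhaus on $\s$ gives equicontinuity, which coincides with strong boundedness because $\s$ is Montel), not barrelledness of $\sd$; and the strong $C^\infty$-ness of $y\mapsto\delta_y$ is cleanest from the Montel property of $\sd$ (bounded weak-$*$ convergent nets converge strongly) rather than ``reflexivity and sequential completeness.'' Since you flag these as the delicate points and promise to state them carefully, this is a matter of citing the correct properties rather than a gap, and the construction is sound.
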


\begin{proof}
  This claim is a consequence of \cite[Theorem 51.6]{Treves} and \cite[Theorem 1.48]{Amann2} if one uses that $\s$ and $\sd$ are nuclear and conuclear, see e.g. \cite[p.\,530]{Treves}. For more details we refer to \cite[Theorem 2.62]{Diss}.
\end{proof}

We can apply the previous kernel theorem for the iterated commutators of linear and bounded operators $P: \sd \rightarrow \s$. These operators are defined in the following way:

\begin{Def}\label{Def:IteratedCommutators}
  Let $X,Y \in \{ \s, \sd \}$ and $T: X \rightarrow Y$ be linear. We define the linear operators $\ad(-ix_j) T: X \rightarrow Y$ and $\ad(D_{x_j}) T: X \rightarrow Y$ for all $j\in \{ 1, \ldots, n\}$ and $u \in X$ by
  \begin{eqnarray*}
    \ad(-ix_j) T u:= -ix_j Tu + T \left( ix_j u \right) \quad \textrm{and} \quad \ad(D_{x_j}) T u:= D_{x_j} \left( T u \right) - T \left( D_{x_j} u \right).
  \end{eqnarray*}
  For arbitrary multi-indices $\alpha, \beta \in \N_0^n$ we denote the \textit{iterated commutator} of $T$ as
  \begin{eqnarray*}
    \ad(-ix)^{\alpha}\ad(D_{x})^{\beta} T := [\ad(-ix_1)]^{\alpha_1} \ldots [\ad(-ix_n)]^{\alpha_n} [\ad(D_{x_1})]^{\beta_1} \ldots [\ad(D_{x_n})]^{\beta_n} T.
  \end{eqnarray*}
\end{Def}

On account of the previous definition all iterated commutators of $P: \sd \rightarrow \s$ map $\sd$ to $\s$. Consequently an application of Theorem \ref{thm:SchwartzKernel} provides:

\begin{kor}\label{kor:KernFürAdP}
  Let $\alpha, \beta \in \Non$ and $P: \sd \rightarrow \s$ be a linear operator. Then the operator $\ad(-ix)^{\alpha} \ad(D_x)^{\beta} P: \sd \rightarrow \s$ has a Schwartz kernel $f^{\alpha,\beta} \in \snn$, i.e., for all $u \in \s$ 
    \begin{align}
      \ad(-ix)^{\alpha} \ad(D_x)^{\beta} P u(x) = \intr f^{\alpha, \beta}(x,y) u(y) dy \qquad \text{for all } x \in \Rn.
    \end{align}
\end{kor}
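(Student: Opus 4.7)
The plan is to reduce the statement directly to Theorem \ref{thm:SchwartzKernel}. For this I only need to check that the iterated commutator $\ad(-ix)^{\alpha}\ad(D_x)^{\beta}P$ is again a continuous linear operator from $\sd$ to $\s$; once this is established, the kernel theorem yields $f^{\alpha,\beta} \in \snn$ together with the claimed integral representation on $\s \subseteq \sd$.

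First I would record the building blocks: multiplication by the coordinate $-ix_j$ is a continuous linear map $\s \to \s$, and by duality (via $\langle x_j u, \varphi \rangle := \langle u, x_j \varphi\rangle$ for $u \in \sd$, $\varphi \in \s$) it is also continuous $\sd \to \sd$. Similarly $D_{x_j}$ is continuous on $\s$ and on $\sd$. From this it follows that
\begin{align*}
  \ad(-ix_j)P &= (-ix_j)\circ P - P\circ(-ix_j),\\
  \ad(D_{x_j})P &= D_{x_j}\circ P - P\circ D_{x_j}
\end{align*}
are compositions of the form $\sd \to \sd \to \s$ and $\sd \to \s \to \s$, hence continuous linear maps $\sd \to \s$.

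Next I would run an induction on $|\alpha|+|\beta|$: if $T: \sd \to \s$ is continuous and linear, the previous step shows that $\ad(-ix_j)T$ and $\ad(D_{x_j})T$ are continuous linear maps $\sd \to \s$, and applying this successively to $T = P$ gives that $\ad(-ix)^{\alpha}\ad(D_x)^{\beta}P: \sd \to \s$ is continuous and linear. Finally I would invoke Theorem \ref{thm:SchwartzKernel} on this operator to produce $f^{\alpha,\beta} \in \snn$ satisfying the displayed identity, noting that $\s \subseteq \sd$ so the formula is valid for every $u \in \s$.

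There is no genuine obstacle here; the only thing to be slightly careful about is the continuity of the two basic operations on $\sd$, which is immediate from the definition of multiplication and differentiation of tempered distributions and the corresponding continuity statements on $\s$. The statement is essentially a bookkeeping consequence of the kernel theorem together with the stability of the class of continuous linear operators $\sd \to \s$ under iterated commutators with $-ix_j$ and $D_{x_j}$.
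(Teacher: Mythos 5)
Your proposal is correct and takes essentially the same route as the paper, which simply observes (in the sentence preceding the corollary) that iterated commutators of a continuous linear $P:\sd\to\s$ again map $\sd$ continuously to $\s$ and then applies Theorem \ref{thm:SchwartzKernel}; your version just makes explicit the continuity of multiplication by $-ix_j$ and of $D_{x_j}$ on both $\s$ and $\sd$, together with the induction on $|\alpha|+|\beta|$. Note that both you and the paper tacitly read the hypothesis as $P$ being a \emph{continuous} linear operator, which the corollary's wording omits but is clearly required in order to invoke the kernel theorem.
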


Iterated commutators of pseudodifferential operators are pseudodifferential operators again due to the properties of the Fourier transformation:

\begin{bem}\label{bem:SymbolOfIteratedCommutatorNonSmooth}\label{bem:SymbolOfIteratedCommutator}
  Let $\tilde{m}\in \N_0$, $M \in \N_0 \cup \{ \infty \}$, $0< \tau \leq 1$, $m\in \R$ and $0 \leq \rho \leq 1$. We assume that $ p \in C^{\tilde{m}, \tau} S^m_{\rho,0} (\RnRn; M)$. Moreover, let $l \in \N$, $\alpha_1, \ldots, \alpha_l \in \Non$ and $\beta_1, \ldots, \beta_l \in \Non$ with $|\alpha_j + \beta_j| = 1$ for all $j \in \{ 1, \ldots, n\}$, $|\alpha| \leq M$ and $|\beta| \leq \tilde{m}$. Here $\alpha$ and $\beta$ are defined by $\alpha:= \alpha_1 + \ldots + \alpha_l$ and $\beta := \beta_1 + \ldots + \beta_l$. Then the operator 
  $$\ad(-ix)^{\alpha_1} \ad(D_x)^{\beta_1} \ldots \ad(-ix)^{\alpha_l} \ad(D_x)^{\beta_l} p(x,D_x)$$ 
  is a pseudodifferential operator with the symbol 
  $$\pa{\alpha} D^{\beta}_x  p(x,\xi) \in C^{\tilde{m}- |\beta|, \tau} S^{m-\rho |\alpha|}_{\rho,0} (\RnRnx{x}{\xi}; M-|\alpha|).$$
  If we even have $ p \in S^m_{\rho,0} (\RnRn)$, then $\pa{\alpha} D^{\beta}_x  p(x,\xi) \in S^{m-\rho |\alpha| }_{\rho,0} (\RnRn)$.
\end{bem}

An application of the kernel theorem provides:

\begin{lemma}\label{lemma:AbleitungVonp}
  Let $g \in \s$. For all $y \in \Rn$ we denote $g_y:=\tau_y(g)$. Moreover, let $P:\sd \rightarrow \s$ be linear and continuous. We define $p:\RnRnRn \rightarrow \C$ by
  \begin{align*}
    p (x,\xi,y):=  e^{-ix \cdot \xi}  P \left(e_{\xi} g_y \right)(x) \qquad \text{for all } x,\xi, y \in \Rn.
  \end{align*}
  Then 
  we have for all $\alpha, \beta, \gamma \in \Non$: 
  \begin{align*}
    \p_{\xi}^{\alpha} D_x^{\beta} D_y^{\gamma} p(x,\xi,y) 
    = (-1)^{\gamma} \sum_{\beta_1 + \beta_2 = \beta} \binom{\beta}{\beta_1}  e^{-ix \cdot \xi} \left( \ad(-ix)^{\alpha} \ad (D_x)^{\beta_1} P \right) \left(e_{\xi} D_x^{\beta_2 + \gamma} g_y \right)(x).
  \end{align*}
\end{lemma}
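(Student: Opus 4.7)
The plan is to reduce everything to the three building blocks $D_y^\gamma$, $\p_\xi^\alpha$ and $D_x^\beta$, computed in that order since they commute. Before differentiating I would note that $(\xi,y)\mapsto e_\xi g_y$ is smooth as an $\s$-valued map on $\RnRn$ (since $g\in\s$, and $\xi\mapsto e_\xi$ and $y\mapsto \tau_y g$ are smooth with values in the multiplier algebra of $\s$ and in $\s$ itself, respectively). Combined with the continuity of $P:\sd\to\s$ this ensures that $p$ is smooth on $\R^{3n}$ and that all partial derivatives in $\xi$ and $y$ may be taken inside $P$.

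The $y$-derivatives are immediate: since $g_y(z)=g(z-y)$ one has $D_{y_j} g_y=-D_{x_j} g_y$, hence $D_y^\gamma g_y=(-1)^{|\gamma|} D_x^\gamma g_y$, and because $e^{-ix\cdot\xi}$, $P$ and $e_\xi$ do not depend on $y$,
\[
D_y^\gamma p(x,\xi,y)=(-1)^{|\gamma|}\, e^{-ix\cdot\xi} P(e_\xi D_x^\gamma g_y)(x).
\]
For a single $\p_{\xi_j}$ acting on an expression of the form $e^{-ix\cdot\xi} P(e_\xi f)(x)$ with $f\in\s$, I would use $\p_{\xi_j} e_\xi(z)=iz_j e_\xi(z)$ to get
\[
\p_{\xi_j}\bigl[e^{-ix\cdot\xi}P(e_\xi f)(x)\bigr]=-ix_j e^{-ix\cdot\xi}P(e_\xi f)(x)+e^{-ix\cdot\xi}P(iz_j e_\xi f)(x),
\]
which collapses to $e^{-ix\cdot\xi}\ad(-ix_j)P(e_\xi f)(x)$ by the very definition of $\ad(-ix_j)$. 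Iterating gives $\p_\xi^\alpha[e^{-ix\cdot\xi}P(e_\xi f)(x)]=e^{-ix\cdot\xi}\ad(-ix)^\alpha P(e_\xi f)(x)$.

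Finally, set $Q:=\ad(-ix)^\alpha P$ and apply a single $D_{x_j}$ to $e^{-ix\cdot\xi}Q(e_\xi h)(x)$. The Leibniz rule produces $-\xi_j e^{-ix\cdot\xi}Q(e_\xi h)(x)$ from differentiating the exponential, together with $e^{-ix\cdot\xi}D_{x_j}[Q(e_\xi h)](x)$; the latter splits via the definition of $\ad(D_{x_j})$ into $\ad(D_{x_j})Q(e_\xi h)(x)+Q(D_{x_j}(e_\xi h))(x)$, and $D_{x_j}(e_\xi h)=\xi_j e_\xi h+e_\xi D_x^{e_j} h$. The two $\xi_j$-contributions cancel, leaving the clean recursion
\[
D_{x_j}\bigl[e^{-ix\cdot\xi}Q(e_\xi h)(x)\bigr]=e^{-ix\cdot\xi}\bigl[\ad(D_{x_j})Q(e_\xi h)(x)+Q(e_\xi D_x^{e_j} h)(x)\bigr].
\]
An easy induction on $|\beta|$, combined with the multi-index binomial theorem and the commutativity of the $\ad(D_{x_j})$, then yields
\[
D_x^\beta\bigl[e^{-ix\cdot\xi}Q(e_\xi h)(x)\bigr]=e^{-ix\cdot\xi}\sum_{\beta_1+\beta_2=\beta}\binom{\beta}{\beta_1}\ad(D_x)^{\beta_1}Q(e_\xi D_x^{\beta_2} h)(x),
\]
and substituting $Q=\ad(-ix)^\alpha P$ and $h=D_x^\gamma g_y$ assembles the three partial computations into the claim. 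The main obstacle I expect is precisely the $\xi_j$-cancellation in the $D_{x_j}$-step, which is what makes the recursion close cleanly and replaces what would otherwise be polynomial $\xi$-factors by simple commutators plus extra derivatives on $g_y$; the remainder is bookkeeping together with a routine appeal to the continuity of $P$ to justify interchanging differentiations with $P$.
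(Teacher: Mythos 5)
Your proposal follows essentially the same route as the paper's proof: peel off the $y$-derivatives first, then generate the $\ad(-ix)$ and $\ad(D_x)$ commutators by differentiating the conjugated expression $e^{-ix\cdot\xi}P(e_\xi\,\cdot\,)(x)$. The only real differences are cosmetic: the paper differentiates in the order $D_y^\gamma$, $D_x^\beta$, $\p_\xi^\alpha$ and justifies each interchange by writing $P$ through its Schwartz kernel (Theorem~\ref{thm:SchwartzKernel} and Corollary~\ref{kor:KernFürAdP}), whereas you swap the last two steps and appeal instead to the $\mathcal{S}$-valued smoothness of $(\xi,y)\mapsto e_\xi g_y$ and the continuity of $P$, which is a legitimate alternative. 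One small bookkeeping gap caused by your reordering: after substituting $Q=\ad(-ix)^\alpha P$ your formula reads $\ad(D_x)^{\beta_1}\ad(-ix)^\alpha P$, while the statement (per Definition~\ref{Def:IteratedCommutators}) has $\ad(-ix)^\alpha\ad(D_x)^{\beta_1}P$; these agree because $\bigl[\ad(-ix_j),\ad(D_{x_k})\bigr]=\ad\bigl([-ix_j,D_{x_k}]\bigr)=\ad(\delta_{jk}\,\id)=0$, but you only invoke commutativity among the $\ad(D_{x_j})$, so this mixed commutativity should be stated explicitly (or one can simply take the paper's order of differentiation, which avoids the issue).
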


\begin{proof}
  Theorem \ref{thm:SchwartzKernel} provides the existence of a Schwartz kernel $f \in \snn$ of $P$.
  Due to $g \in \s$ and $f \in \snn$ we 
  get for 
  all $x \in \Rn$:
  \begin{align*}
    &D_y^{\gamma} \left\{ e^{-ix \cdot \xi} P (e_{\xi} g_y)(x) \right\} =  e^{-ix \cdot \xi}  D_y^{\gamma}  \int f(x,z) e^{iz \cdot \xi} g_y(z) dz \\
    &\qquad = e^{-ix \cdot \xi} \int f(x,z) e^{iz \cdot \xi}  D^{\gamma}_y g_y(z) dz 
     = (-1)^{|\gamma|} e^{-ix \cdot \xi} P (e_{\xi} D^{\gamma}_x g_y)(x).
  \end{align*}
  Inductively with respect to $|\beta|$ we can show for all $\beta,\gamma \in \Non$ and each $x \in \Rn$:
  \begin{align}\label{73}
    & D_x^{\beta} D_y^{\gamma} \left\{ e^{-ix \cdot \xi} P (e_{\xi} g_y)(x) \right\} = (-1)^{|\gamma|} D_x^{\beta} \left\{ e^{-ix \cdot \xi} P (e_{\xi} D^{\gamma}_x g_y)(x) \right\} \notag \\
    & \qquad = (-1)^{|\gamma|} \sum_{\beta_1 + \beta_2 = \beta} \binom{\beta}{\beta_1}  e^{-ix \cdot \xi} \ad (D_x)^{\beta_1} P \left(e_{\xi} D_x^{\beta_2 + \gamma} g_y \right)(x).
  \end{align}
  With Corollary \ref{kor:KernFürAdP} at hand, the iterated commutator $\ad (D_x)^{\beta_1} P$ has a Schwartz kernel $f^{\beta_1} \in \snn$.
  Due to $e_{\xi} D_x^{\beta_2 + \gamma} g_y \in \s$ and $(ix)^{\alpha_2} e_{\xi} D_x^{\beta_2 + \gamma} g_y(x) \in \sindo{x}$ an application of the Leibniz rule and 
  interchanginig the derivatives with the integral
  yields for all $x \in \Rn$:
  \begin{align}\label{72}
    &\p_{\xi}^{\alpha} \left\{ e^{-ix \cdot \xi} \ad (D_x)^{\beta_1} P \left(e_{\xi} D_x^{\beta_2 + \gamma} g_y \right)(x) \right\} \notag \\
    & \qquad = \sum_{\alpha_1 + \alpha_2 = \alpha} \binom{\alpha}{\alpha_1} (-ix)^{\alpha_1} e^{-ix \cdot \xi} \int f^{\beta_1} (x,z) (iz)^{\alpha_2} e^{iz \cdot \xi} D_z^{\beta_2 + \gamma} g_y(z) dz \notag \\
    & \qquad = e^{-ix \cdot \xi} (\ad(-ix)^{\alpha} \ad (D_x)^{\beta_1} P) \left( e_{\xi} D_x^{\beta_2 + \gamma} g_y\right) (x).
  \end{align}
  Finally, the combination of (\ref{73}) and (\ref{72}) finishes the proof:
  \begin{align*}
    &\p_{\xi}^{\alpha} D_x^{\beta} D_y^{\gamma} p(x,\xi,y) = \p_{\xi}^{\alpha} D_x^{\beta} D_y^{\gamma} \left\{ e^{-ix \cdot \xi}  P \left(e_{\xi} g_y \right)(x) \right\} \\
    & \qquad = (-1)^{|\gamma|} \sum_{\beta_1 + \beta_2 = \beta} \binom{\beta}{\beta_1} e^{-ix \cdot \xi} (\ad(-ix)^{\alpha} \ad (D_x)^{\beta_1} P) \left( e_{\xi} D_x^{\beta_2 + \gamma} g_y\right) (x)
  \end{align*}
  for all $x, \xi, y \in \Rn$.
\end{proof}

\subsection{Extension of the Space of Amplitudes}\label{section:ExtensionSpaceOfAmplitudes}

An important technique for working with smooth pseudodifferential operators are the oscillatory integrals, defined by
\begin{align*}
  \osint e^{-iy \cdot \eta} a(y,\eta) dy \dq \eta := \lim_{\e \rightarrow 0} \iint \chi(\e y, \e \eta) e^{-iy \cdot \eta} a(y,\eta) dy \dq \eta
\end{align*}
for all elements $a$ of the \textit{space of amplitudes} $\mathscr{A}^m_{\tau}(\RnRn)$ $(m,\tau \in \R)$, the set of all smooth functions $a:\Rn \times \Rn \rightarrow \C$ such that
  \begin{align*}
    |\p^{\alpha}_{\eta} \p^{\beta}_y a(y,\eta)| \leq C_{\alpha, \beta} (1+|\eta|)^m (1+|y|)^{\tau}
  \end{align*}
uniformly in $y,\eta \in \Rn$ for all $\alpha, \beta \in \Non$. In order to use the oscillatory integral in the non-smooth case we extend the space of amplitudes 
in the following way:

\begin{Def}\label{Def:ErweiterungRaumAmplituden}
  Let $m, \tau \in \R$ and $N \in \N_0 \cup \{ \infty \}$. We
  define $\mathscr{A}^{m,N}_{\tau}(\RnRn)$
  as the set of all functions $a: \RnRn
\rightarrow \C$ with the following properties: For all
$\alpha, \beta \in \Non$ with $|\alpha| \leq N$ we have
  \begin{enumerate}
    \item[i)] $\p^{\alpha}_{\eta} \p^{\beta}_{y} a(y,\eta) \in
C^0(\RnRnx{y}{\eta})$,
    \item[ii)] $\left|\p^{\alpha}_{\eta} \p^{\beta}_{y} a(y, \eta) \right|
\leq C_{\alpha, \beta} (1 + |\eta|)^m (1 + |y|)^{\tau}$ for all $y, \eta
\in \Rn$.
  \end{enumerate}
\end{Def}
Note that $\mathscr{A}^{m,\infty}_{\tau}(\RnRn) = \mathscr{A}^{m}_{\tau}(\RnRn)$.\\

\begin{bem}\label{bem:eFunktion}
  For $m=2k$, $k \in \N$ we have $e^{ix\cdot\xi} = \<{\xi}^{-m} \<{D_x}^{m} e^{ix\cdot\xi}$.
  Additionally we have for $m=2k+1$, $k \in \N_0$:
  \begin{align*}
    e^{ix\cdot\xi} 
     =\<{\xi}^{-m-1} \<{D_x}^{m-1}e^{ix\cdot\xi}  + \sum_{j=1}^n \<{\xi}^{-m} \frac{\xi_j}{\<{\xi} } \<{D_x}^{m-1} D_{x_j} e^{ix\cdot\xi}
  \end{align*}
\end{bem}

On account of the previous remark, we define for all $m \in \N$
\begin{align*}
  A^m(D_{x},\xi) &:= \<{\xi}^{-m} \<{D_x}^{m} \quad &\text{ if } m \text{ is even},\\
  A^m(D_{x},\xi) &:= \<{\xi}^{-m-1} \<{D_x}^{m-1} -\sum_{j=1}^n \<{\xi}^{-m} \frac{\xi_j}{\<{\xi} } \<{D_x}^{m-1} D_{x_j} \quad &\text{ else}.
\end{align*}

\begin{bem}\label{bem:Absch}
  Let $m, \tau \in \R$ and $N \in \N_0 \cup \{ \infty \}$. Moreover let $\mathscr{B} \subseteq \mathscr{A}^{m,N}_{\tau}(\RnRn)$ be bounded, i.e., for all $\alpha,\beta \in \Non$ with $|\beta| \leq N$ we have
  \begin{align*}
    |\p^{\alpha}_x \p^{\beta}_{\xi} a| \leq C_{\alpha, \beta} \<{\xi}^m \<{x}^{\tau} \qquad \text{for all } a \in \mathscr{B}.
  \end{align*}
  Then we obtain for all $\alpha,\beta \in \Non$ with $|\beta| \leq N$ and $l \in \N$:
  \begin{align*}
    \left| \p^{\alpha}_x \p^{\beta}_{\xi} \left\{ \<{\xi}^{-l-1} \<{D_x}^{l-1} a(x,\xi)-\sum_{j=1}^n \<{\xi}^{-l} \frac{\xi_j}{\<{\xi} } \<{D_x}^{l-1} D_{x_j} a(x,\xi) \right\} \right| 
    \leq C_{\alpha}\<{\xi}^{-l+m} \<{x}^{\tau}
  \end{align*}
  for all $a \in \mathscr{B}$.
\end{bem}

\begin{proof}
  Since $\xi_j \<{\xi}^{-1} \in S^0_{1,0}(\RnRn)$  
  we obtain the claim together with the assumptions and the Leibnitz rule.
\end{proof}

Definition \ref{Def:ErweiterungRaumAmplituden} enables us to extend the definition of the oscillatory integral for functions in the set
$\mathscr{A}^{m,N}_{\tau}(\RnRn)$. It can be proved similarly to e.g. Theorem 3.9 in \cite{PDO} while using Remark \ref{bem:eFunktion} and Remark \ref{bem:Absch}.

\begin{thm}\label{thm:ExistenceOfOscillatoryIntegral}
  Let $m, \tau \in \R$ and $N \in \N_0 \cup \{ \infty\}$ with $N  > n+ \tau$. Moreover, let $\chi \in
\mathcal{S}(\RnRn)$ with $\chi(0,0)=1$ be arbitrary. Then the
\textbf{oscillatory integral}
  \begin{align*}
    \osint e^{-iy \cdot \eta} a(y,\eta) dy \dq \eta := \lim_{\e \rightarrow 0}
\iint \chi(\e y, \e \eta) e^{-iy \cdot \eta} a(y,\eta) dy \dq \eta
  \end{align*}
  exists for each $a \in \mathscr{A}^{m,N}_{\tau}(\RnRn)$. Additionally for all $l,l' \in
\N_0$ with $l > n+m$ and $N \geq l' > n + \tau$ we have
  \begin{align*}
    \osint e^{-iy \cdot \eta} a(y,\eta) dy \dq \eta = \iint e^{-iy \cdot \eta}
A^{l'}(D_{\eta},y) [ A^{l}(D_{y},\eta) a(y,\eta) ]
dy \dq \eta.
  \end{align*}
  Therefore the definition does not depend on the choice of $\chi$.
\end{thm}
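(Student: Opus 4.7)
The plan is to follow the classical argument for oscillatory integrals (as in \cite[Theorem 3.9]{PDO}), adapting it to the limited regularity in $\eta$ available for amplitudes in $\mathscr{A}^{m,N}_{\tau}(\RnRn)$. The key observation is that in Definition \ref{Def:ErweiterungRaumAmplituden}, the constraint $|\alpha|\leq N$ only affects $\eta$-derivatives, while $y$-derivatives of any order are available. This matches perfectly with the operators: $A^l(D_y,\eta)$ only produces $y$-derivatives of $a$ (needed order $l$, unrestricted), whereas $A^{l'}(D_\eta,y)$ produces $\eta$-derivatives (needed order $l'$, so one must require $l'\leq N$), which together with the absolute-integrability requirement explains the bound $N\geq l'>n+\tau$.

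First I would verify the pointwise identity
\[
  A^l(D_y,\eta) e^{-iy\cdot\eta} = e^{-iy\cdot\eta}, \qquad A^{l'}(D_\eta,y) e^{-iy\cdot\eta} = e^{-iy\cdot\eta},
\]
which follows directly from Remark \ref{bem:eFunktion} (with the appropriate sign adjustments since $e^{-iy\cdot\eta}$ replaces $e^{ix\cdot\xi}$). Fix $\chi\in\mathcal{S}(\RnRn)$ with $\chi(0,0)=1$ and $\e\in(0,1]$. The function $\chi(\e y,\e\eta)a(y,\eta)$ lies in $\mathscr{A}^{m,N}_{\tau}(\RnRn)$ and is compactly supported in $\eta$ (so the approximating double integral is absolutely convergent and Fubini applies). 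I would then integrate by parts: because $\chi(\e y,\e\eta)$ is Schwartz, no boundary terms appear, and transferring the operators $A^l(D_y,\eta)$ and $A^{l'}(D_\eta,y)$ from the exponential onto the cut-off amplitude yields
\[
  \iint \chi(\e y,\e\eta)e^{-iy\cdot\eta}a(y,\eta)\,dy\,\dq\eta
  = \iint e^{-iy\cdot\eta}\,A^{l'}(D_\eta,y)\bigl[A^{l}(D_y,\eta)[\chi(\e y,\e\eta)a(y,\eta)]\bigr]\,dy\,\dq\eta.
\]

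Now I would apply Leibniz to expand each derivative falling on the product $\chi(\e y,\e\eta)a(y,\eta)$; every derivative that lands on $\chi$ produces a factor of $\e$ but keeps the family $\{\chi(\e\cdot,\e\cdot)\}_{\e\in(0,1]}$ bounded in every seminorm of $\mathscr{A}^{0,\infty}_{0}(\RnRn)$. Consequently, by Remark \ref{bem:Absch} applied iteratively (first for $A^l$, then for $A^{l'}$), the integrand is dominated uniformly in $\e$ by $C\langle\eta\rangle^{m-l}\langle y\rangle^{\tau-l'}$, which is integrable precisely under the assumptions $l>n+m$ and $l'>n+\tau$. The dominated convergence theorem then gives
\[
  \lim_{\e\to 0}\iint \chi(\e y,\e\eta)e^{-iy\cdot\eta}a(y,\eta)\,dy\,\dq\eta
  = \iint e^{-iy\cdot\eta}\,A^{l'}(D_\eta,y)\bigl[A^{l}(D_y,\eta)a(y,\eta)\bigr]\,dy\,\dq\eta,
\]
since all $\chi$-derivative terms carry at least one factor of $\e$ and vanish in the limit while $\chi(\e y,\e\eta)\to 1$ pointwise. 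The right-hand side is independent of $\chi$, yielding both the existence statement and the independence from the chosen cut-off.

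The main technical obstacle I expect is bookkeeping the Leibniz expansion of $A^{l'}(D_\eta,y)[A^l(D_y,\eta)[\chi\cdot a]]$: one must check that every term involving a derivative of $\chi(\e y,\e\eta)$ is bounded uniformly in $\e$ (so that the Remark \ref{bem:Absch} estimate survives with the same decay) and tends to $0$ as $\e\to 0$ pointwise. This is where one uses that $\chi\in\mathcal{S}$ so that $\e^k\p_\eta^\alpha\p_y^\beta\chi(\e y,\e\eta)$ is bounded on $\RnRn$ uniformly in $\e$ and is supported where $|\eta|+|y|\lesssim \e^{-1}$. Once this is under control, the rest of the argument is routine and mirrors the smooth case.
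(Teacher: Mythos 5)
Your proposal is correct and follows essentially the same approach the paper takes: the paper explicitly defers to the classical proof (Theorem 3.9 in \cite{PDO}) supplemented by Remark \ref{bem:eFunktion} and Remark \ref{bem:Absch}, which is exactly the $\chi$-regularization, integration-by-parts, Leibniz-expansion and dominated-convergence argument you spell out. Your key observation that the constraint $|\alpha|\leq N$ affects only the $\eta$-derivatives, forcing $l' \leq N$ for $A^{l'}(D_\eta,y)$ while $A^{l}(D_y,\eta)$ draws only unrestricted $y$-derivatives, is precisely the adaptation of the smooth argument that the paper intends.
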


Next, we want to convince ourselves that the properties of the oscillatory integral even hold for all functions of the set $\mathscr{A}^{m,N}_{\tau}(\RnRn)$.

\begin{thm}\label{thm:FubiniForOsziInt}\label{thm:VertauschenVonOsziIntUndAbleitungen}
  Let $m,\tau \in \R$ and $k \in  \N$. We define 
   $\tilde{\tau}:= \tau$ if $\tau \geq -k$, $\tilde{\tau}:= -k-0.5$ if $\tau \in \Z$ and $\tau < -k$ and $\tilde{\tau}:= -k-(|\tau| - \lfloor -\tau \rfloor)/2$ else. 
  Moreover, we define $\hat{\tau}:= \tau_+$ if $\tau \geq -k$ and $\hat{\tau}:= \tau- \tilde{\tau}$ else. 
  Additionally let $N \in \N_0 \cup \{ \infty \}$ and  $M:= \max\{ m \in \N_0: N-m \geq l > k+ \tilde{\tau} \text{ for one } l \in \N_0 \}$. Assuming an $a \in \mathscr{A}^{m,N}_{\tau}(\R^{n+k} \times \R^{n+k})$ we define $b: \RnRn \rightarrow \C$ via
  \begin{align*}
    b(y,\eta) := \osint e^{-iy' \cdot \eta'} a(y,y',\eta,\eta') dy' \dq \eta' \qquad \text{for all } y, \eta \in \Rn.
  \end{align*}
  If there is an $\tilde{l} \in \N_0$ with $M \geq \tilde{l} > n + \hat{\tau}$ we obtain
  \begin{align}\label{claim1}
    &\osiint e^{-iy \cdot \eta - iy' \cdot \eta'} a(y,y', \eta, \eta') dy dy' \dq \eta \dq \eta' \notag\\
    & \qquad \qquad = \osint e^{-iy \cdot \eta} \left[ \osint  e^{- iy' \cdot \eta'} a(y,y', \eta, \eta')  dy' \dq \eta' \right] dy \dq \eta. 
  \end{align}
  If there is an $\tilde{l} \in \N_0$ with $N \geq \tilde{l} > k + \tau$ we have $b \in \mathscr{A}^{m_+, M}_{\hat{\tau}}(\R^{n} \times \R^{n})$ and 
  \begin{align}\label{claim2}
    \p_y^{\alpha} \p_{\eta}^{\beta} b(y,\eta) = \osint  e^{-iy' \cdot \eta'}
\p_y^{\alpha} \p_{\eta}^{\beta} a(y,y',\eta,\eta') dy' \dq \eta' \qquad \text{for all } y, \eta \in \Rn
  \end{align}
  for each  $\alpha, \beta \in \Non$ with $|\beta| \leq M$.
\end{thm}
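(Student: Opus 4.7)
The strategy is to mimic the smooth case (essentially the proof of Theorem~\ref{thm:ExistenceOfOscillatoryIntegral}): use a product cutoff, apply ordinary Fubini at the regularized level, and pass to the limit $\e\to 0$ via the integration-by-parts operators $A^l(D_x,\xi)$ from Remark~\ref{bem:eFunktion}. Concretely, I would fix cutoffs $\chi_1\in\mathcal{S}(\RnRn)$ and $\chi_2\in\mathcal{S}(\R^k\times\R^k)$ with $\chi_i(0,0)=1$, so that the tensor product $\chi(y,y',\eta,\eta'):=\chi_1(y,\eta)\chi_2(y',\eta')$ is an admissible cutoff in $\mathcal{S}(\R^{n+k}\times\R^{n+k})$ for the double Os-integral on the left of \eqref{claim1}. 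For each fixed $\e>0$ the regularized integrand is absolutely integrable, so genuine Fubini applies and the regularized double integral equals its iterated version.

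In order to establish \eqref{claim2} and the claim $b\in\mathscr{A}^{m_+,M}_{\hat\tau}(\RnRn)$ at the same time, I would invoke Theorem~\ref{thm:ExistenceOfOscillatoryIntegral} for the inner integral in the variables $(y',\eta')\in\R^k\times\R^k$, choosing integers $l>k+m$ and $\tilde l$ with $M\geq \tilde l>k+\hat\tau$, and write
\begin{align*}
  \p_y^{\alpha}\p_{\eta}^{\beta} b(y,\eta) = \iint e^{-iy'\cdot\eta'}\,A^{\tilde l}(D_{\eta'},y')\bigl[A^{l}(D_{y'},\eta')\,\p_y^{\alpha}\p_{\eta}^{\beta} a(y,y',\eta,\eta')\bigr]\,dy'\,\dq\eta'.
\end{align*}
Remark~\ref{bem:Absch} then converts the prefactor $\<\eta'\>^{m}\<(y,y')\>^{\tau}$ into $\<\eta'\>^{-l+m}\<y'\>^{-\tilde l}$ times a power of $\<(y,y')\>$. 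The delicate step is splitting $\<(y,y')\>^{\tau}$ between $\<y\>$ and $\<y'\>$: for $\tau\geq -k$ the naive estimate $\<(y,y')\>^{\tau}\leq C\<y\>^{\tau_+}\<y'\>^{\tau_+}$ gives growth $\<y\>^{\hat\tau}$ with $\hat\tau=\tau_+$, while for $\tau<-k$ one uses a sharper Peetre-type inequality $\<(y,y')\>^{\tau}\leq C\<y\>^{\tau-\tilde\tau}\<y'\>^{\tilde\tau}$ in which $\tilde\tau$ is chosen just below $-k$ and non-integer, so that $\tilde l-\tilde\tau>k$ still holds and $\<y'\>^{\tilde\tau-\tilde l}$ is integrable over $\R^k$, leaving only $\<y\>^{\tau-\tilde\tau}=\<y\>^{\hat\tau}$ in $y$. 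The interchange of derivative and integral in \eqref{claim2} then follows by dominated convergence applied to the regularized version of the same representation, uniformly in $\e$.

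Finally, \eqref{claim1} follows by inserting the product cutoff $\chi_1(\e y,\e\eta)\chi_2(\e y',\e\eta')$ into the double Os-integral and passing to $\e\to 0$ in two stages. The inner limit produces $b(y,\eta)$ pointwise (the hypothesis on the inner convergence is exactly $N\geq \tilde l>k+\tau$ as required by Theorem~\ref{thm:ExistenceOfOscillatoryIntegral}), while the estimate $|\p_y^{\alpha}\p_{\eta}^{\beta} b(y,\eta)|\leq C\<\eta\>^{m_+}\<y\>^{\hat\tau}$ from the previous paragraph, together with the hypothesis $M\geq \tilde l>n+\hat\tau$, makes $b$ itself admissible for the outer Os-integral, so the outer limit is again justified by Theorem~\ref{thm:ExistenceOfOscillatoryIntegral}. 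I expect the main obstacle to be the bookkeeping surrounding the parameters $\tilde\tau$, $\hat\tau$, $M$ and $\tilde l$, i.e.\ verifying that the integrability conditions at the inner level and at the outer level can be met simultaneously by the indices defined in the statement; once the correct splitting of $\<(y,y')\>^{\tau}$ is in place, the remainder of the argument is a routine adaptation of the smooth proof.
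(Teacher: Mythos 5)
Your strategy coincides with the paper's: the paper likewise reduces \eqref{claim1} and \eqref{claim2} to the smooth-case proof (Theorem~3.13 in \cite{PDO}), replacing the smooth integration-by-parts estimate by Remark~\ref{bem:Absch}, and records as the only genuinely new ingredient the split
\begin{align*}
  \<{(\eta,\eta')}^m\<{(y,y')}^{\tau}\leq C\,\<{\eta}^{m_+}\<{\eta'}^m\<{y}^{\hat{\tau}}\<{y'}^{\tilde{\tau}},
\end{align*}
obtained from $\<{(\eta,\eta')}\geq\<{\eta}$ and a Peetre-type argument. You correctly identify this split as the crux, and your treatment of the case $\tau<-k$ (choosing $\tilde{\tau}$ slightly below $-k$, non-integer) matches the paper's choice.

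There is, however, one small but real slip in the case $-k\leq\tau<0$. You propose $\<{(y,y')}^{\tau}\leq C\<{y}^{\tau_+}\<{y'}^{\tau_+}=C$, throwing away all decay in $y'$. But the theorem defines $M$ via the constraint $N-m\geq l>k+\tilde{\tau}$ with $\tilde{\tau}=\tau$ when $\tau\geq -k$; if you only retain $\<{y'}^{\tau_+}=1$ then the inner integration-by-parts gain $\<{y'}^{-\tilde{l}}$ must satisfy $\tilde{l}>k=k+\tau_+$, which is strictly stronger than $\tilde{l}>k+\tau$ for $\tau<0$, so you would prove the amplitude estimate for fewer $\eta$-derivatives than the stated $M$. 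The correct split costs nothing: for $\tau<0$ one has $\<{(y,y')}\geq\<{y'}$ and hence $\<{(y,y')}^{\tau}\leq\<{y'}^{\tau}=\<{y}^{\tau_+}\<{y'}^{\tilde{\tau}}$ directly, retaining the full decay $\<{y'}^{\tau}$ that the definition of $\tilde{\tau}$ encodes. With that correction your argument matches the paper's.
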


\begin{proof}
  We can show (\ref{claim1}) in the same manner as Theorem 3.13 in \cite{PDO} while using Remark \ref{bem:Absch}. Now let $\tilde{l} \in \N_0$ with $N \geq \tilde{l} > k + \tau$. On account of $\<{(\eta, \eta')}^2 \geq \<{\eta}^2$ for all $\eta, \eta' \in \Rn$ and of Peetre's inequality, cf. \cite[Lemma 3.7]{PDO}, we get 
  \begin{align*}
    \<{(\eta, \eta')}^m \<{(y,y')}^{\tau} \leq C \<{\eta}^{m_+} \<{\eta'}^m \<{y}^{ \hat{\tau} } \<{y'}^{ \tilde{\tau} }  \qquad \text{for all } \eta, \eta', y,y' \in \Rn.	
  \end{align*}
  Hence we obtain for fixed $y,\eta \in \Rn$ and for all $\alpha, \beta \in \Non$, $\tilde{\alpha}, \tilde{\beta} \in \N^k_0 $ with $|\beta| \leq M$ and $|\tilde{\beta}| \leq N-|\beta|$:
  \begin{align*}
    |\p_y^{\tilde{\alpha}} \p_{\eta}^{\tilde{\beta}} \p^{\alpha}_y \p^{\beta}_{\eta} a(y,y',\eta, \eta')| \leq C_{y,\eta} \<{\eta'}^m \<{y'}^{ \tilde{\tau} } \qquad \text{for all } \eta',y' \in \Rn.
  \end{align*}
  Using the previous inequality (\ref{claim2}) can be verified in the same way as \cite[Theorem 3.13]{PDO} while using Remark \ref{bem:Absch}.
\end{proof}

In the same way as Theorem 6.8 in \;\cite{KumanoGo} and Corollary 3.10 in \cite{PDO} we can verify the following statements while using Remark \ref{bem:eFunktion} and Remark \ref{bem:Absch}:

\begin{thm}\label{thm:OscillatoryIntegralGleichung}
 Let $m,\tau \in \R$ and $N \in  \N_0 \cup \{ \infty \}$ with $N > n + \tau$. 
 Moreover, let $l_0, \tilde{l}_0 \in \N_0$ with $\tilde{l}_0 \leq N$. Then
 \begin{align*}
    \osint e^{-iy \cdot \eta} a(y,\eta) dy \dq \eta = \osint e^{-iy \cdot \eta}
 A^{\tilde{l}_0}(D_{\eta},y) A^{l_0}(D_y, \eta) a(y,\eta) 
dy \dq \eta
 \end{align*}
 for every $a \in \mathscr{A}^{m, N}_{\tau}(\R^{n} \times \R^{n})$.
\end{thm}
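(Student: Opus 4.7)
My approach mirrors \cite[Theorem 6.8]{KumanoGo} and \cite[Corollary 3.10]{PDO}, adapted to the enlarged amplitude class $\mathscr{A}^{m,N}_{\tau}$. I would work with the defining regularization
$$I_\e(a) := \iint \chi(\e y,\e\eta)\, e^{-iy\cdot\eta}\, a(y,\eta)\, dy\, \dq\eta,$$
for $\chi \in \snn$ with $\chi(0,0)=1$, and establish the identity
$$I_\e(a) = I_\e\!\left(A^{\tilde{l}_0}(D_\eta,y)A^{l_0}(D_y,\eta) a\right) + R_\e$$
with a remainder $R_\e \to 0$ as $\e \to 0$. Passing to the limit then yields the theorem, since both $I_\e$-terms converge by definition to the oscillatory integrals on the two sides of the claimed equality.

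To derive the identity, I would first use the fact (an immediate consequence of Remark \ref{bem:eFunktion} applied to $e^{-iy\cdot\eta}$) that $A^{l_0}(D_y,\eta)e^{-iy\cdot\eta}=e^{-iy\cdot\eta}$, insert this into $I_\e(a)$, and integrate by parts $l_0$ times in $y$. Since $\chi(\e y,\e\eta) a(y,\eta)$ is Schwartz in $y$, no boundary terms arise, and $A^{l_0}$ is by design such that it is transferred onto $\chi(\e y,\e\eta) a(y,\eta)$. A Leibniz expansion then splits the result into the main contribution $\chi(\e y,\e\eta) A^{l_0}(D_y,\eta) a(y,\eta)$ plus remainder terms in which at least one derivative strikes the cutoff $\chi(\e y,\e\eta)$, each such term extracting a factor $\e^{|\gamma|}$ with $|\gamma|\geq 1$ via the chain rule. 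The analogous step in the $\eta$-variable with $A^{\tilde{l}_0}(D_\eta,y)$, legitimate because $\tilde{l}_0\leq N$, completes the splitting.

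The main technical step is the vanishing $R_\e\to 0$. Each remainder has the schematic form $\e^{|\gamma|}\iint(\partial^{\gamma'}\chi)(\e y,\e\eta)\, e^{-iy\cdot\eta}\, b_\gamma(y,\eta)\, dy\, \dq\eta$ with $|\gamma|\geq 1$ and $b_\gamma$ still an amplitude in some $\mathscr{A}^{m',N'}_{\tau'}$ by repeated application of Remark \ref{bem:Absch}. Applying Theorem \ref{thm:ExistenceOfOscillatoryIntegral} to each such integral with $l,l'$ sufficiently large (possible because the remaining $\eta$-derivative budget $N'$ still exceeds $n+\tau'$, thanks to $N>n+\tau$) produces an integrand bounded uniformly in $\e\in(0,1]$ by an $L^1$-function. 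The $\e^{|\gamma|}$-factor then drives each remainder to zero by dominated convergence.

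The main obstacle I expect is bookkeeping: one must check that after every Leibniz step the amplitudes $b_\gamma$ remain in spaces $\mathscr{A}^{m',N'}_{\tau'}$ with parameters compatible with a further application of Theorem \ref{thm:ExistenceOfOscillatoryIntegral}, and that the $\eta$-derivative budget $N$ is never exceeded. The hypotheses $N>n+\tau$ and $\tilde{l}_0\leq N$ are precisely what guarantees this. A secondary technicality is tracking the sign distinction between $A^{l_0}(D_y,\eta)$ and its formal transpose for odd $l_0$, which is a finite algebraic book-keeping issue already present in the smooth analog from \cite{KumanoGo}.
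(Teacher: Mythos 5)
Your proposal is correct and follows the same regularization, integration-by-parts, Leibniz, dominated-convergence route that the paper itself defers to (Kumano-Go, Theorem~6.8, and \cite[Corollary 3.10]{PDO}, invoked together with Remark \ref{bem:eFunktion} and Remark \ref{bem:Absch}); the paper gives no independent argument beyond that citation. In particular, your bookkeeping of the $\eta$-derivative budget (reduced by at most $\tilde l_0 \leq N$, still dominating $n+\tau-\tilde l_0$ since $N>n+\tau$) and your note on the transpose sign for odd $l_0$ are exactly the points one must track to push the cited smooth-case argument through the enlarged class $\mathscr{A}^{m,N}_{\tau}$.
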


\begin{kor}\label{kor:VertauschbarkeitVonOsziIntUndLimes}
  Let $m,\tau \in \R$ and $N \in \N_0 \cup \{ \infty \}$ with $N >
n + \tau$. Additionally let $(a_j)_{j \in \N} \subseteq \mathscr{A}^{m, N}_{\tau}(\R^{n} \times \R^{n})$ be bounded, i.e., for all $\alpha, \beta \in \Non$ with $|\alpha| \leq N$: 
  \begin{align*}
    \left| \p^{\beta}_y \p^{\alpha}_{\eta} a_j(y, \eta) \right| \leq C_{\alpha, \beta} \<{\eta}^m \<{y}^{\tau} \qquad \text{for all }y, \eta \in \Rn \text{ and } j \in \N.
  \end{align*}
  Moreover, there is an $a \in \mathscr{A}^{m, N}_{\tau}(\R^{n} \times \R^{n})$ such that
  \begin{align*}
    \lim_{j \rightarrow \infty} \p^{\alpha}_{\eta} \p^{\beta}_y a_j(y,\eta) = \p^{\alpha}_{\eta} \p^{\beta}_y a(y,\eta) \qquad \text{ for all } y, \eta \in \Rn
  \end{align*}
  for each $\alpha, \beta \in \Non$ with $|\alpha| \leq N$. Then
  \begin{align*}
    \lim_{j \rightarrow \infty} \osint  e^{-iy \cdot \eta} a_j(y,\eta) dy \dq \eta = \osint  e^{-iy \cdot \eta} a(y,\eta) dy \dq \eta.
  \end{align*}
\end{kor}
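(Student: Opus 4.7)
The plan is to reduce both oscillatory integrals to absolutely convergent Lebesgue integrals by means of Theorem \ref{thm:OscillatoryIntegralGleichung}, and then to invoke Lebesgue's dominated convergence theorem. The hypothesis $N > n+\tau$ is precisely what is needed to make this regularization legal.

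First, since $N > n + \tau$, I can pick $\tilde{l}_0 \in \N_0$ with $N \geq \tilde{l}_0 > n + \tau$, and separately pick $l_0 \in \N_0$ with $l_0 > n + m$. No compatibility with $N$ is required for $l_0$, since functions in $\mathscr{A}^{m,N}_{\tau}(\RnRn)$ admit arbitrarily many $y$-derivatives by Definition \ref{Def:ErweiterungRaumAmplituden}. Applying Theorem \ref{thm:OscillatoryIntegralGleichung} to each $a_j$ and to $a$, both oscillatory integrals can be rewritten as ordinary Lebesgue integrals
\begin{align*}
  \osint e^{-iy \cdot \eta} a_j(y,\eta) \, dy \,\dq \eta
  = \iint e^{-iy \cdot \eta} A^{\tilde{l}_0}(D_{\eta},y) A^{l_0}(D_y, \eta) a_j(y,\eta)\, dy \,\dq \eta,
\end{align*}
and analogously for $a$.

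Next, I would establish a uniform integrable majorant. The operator $A^{\tilde{l}_0}(D_\eta, y) A^{l_0}(D_y, \eta)$ is, by the formulas preceding Remark \ref{bem:Absch}, a finite linear combination of terms of the form $\<\eta\>^{-l_0}\<y\>^{-\tilde{l}_0}$ times smooth symbol-type factors (involving $\xi_j/\<\xi\>$) times $\p^{\alpha}_\eta \p^{\beta}_y$ with $|\alpha|\leq \tilde{l}_0 \leq N$ and $|\beta|\leq l_0$. Applying Remark \ref{bem:Absch} twice (once in each variable) to the uniform bound on the derivatives of the $a_j$ up to $\eta$-order $N$, one obtains
\begin{align*}
  \left| A^{\tilde{l}_0}(D_\eta, y) A^{l_0}(D_y, \eta) a_j(y,\eta) \right|
  \leq C \<\eta\>^{m-l_0}\<y\>^{\tau - \tilde{l}_0}
\end{align*}
uniformly in $j \in \N$ and $(y,\eta) \in \RnRn$. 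The right-hand side is integrable on $\RnRn$ precisely because $l_0 > n+m$ and $\tilde{l}_0 > n + \tau$.

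Finally, since the crucial $\eta$-derivatives occurring in $A^{\tilde{l}_0}(D_\eta, y) A^{l_0}(D_y, \eta) a_j$ are of order at most $\tilde{l}_0 \leq N$, the pointwise convergence hypothesis on $\p^{\alpha}_\eta \p^{\beta}_y a_j$ for $|\alpha|\leq N$ yields pointwise convergence of the transformed integrands to $A^{\tilde{l}_0}(D_\eta, y) A^{l_0}(D_y, \eta) a(y,\eta)$. Lebesgue's dominated convergence theorem with the majorant above then delivers the claim. The only nontrivial point is the bookkeeping that keeps all required $\eta$-derivatives at orders $\leq N$, which is exactly what the choice $\tilde{l}_0 \leq N$ and the hypothesis $N > n+\tau$ guarantee; everything else is routine symbol calculus and dominated convergence.
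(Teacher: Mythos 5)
Your proposal is essentially correct and takes the same route that the paper implicitly references (Kumano-Go Theorem 6.8, Abels Corollary 3.10): convert the oscillatory integrals to absolutely convergent Lebesgue integrals via the $A^{l}$/$A^{\tilde{l}_0}$ regularization, produce a uniform integrable majorant from the boundedness hypothesis, and conclude by dominated convergence. One small slip: the step that turns $\osint$ into an ordinary $\iint$ is given by the second display in Theorem \ref{thm:ExistenceOfOscillatoryIntegral} (with $l > n+m$ and $N \geq l' > n+\tau$), not by Theorem \ref{thm:OscillatoryIntegralGleichung}, whose right-hand side is still an oscillatory integral; your chosen exponents $l_0$, $\tilde{l}_0$ and your use of Remark \ref{bem:Absch} for the majorant otherwise match the hypotheses of Theorem \ref{thm:ExistenceOfOscillatoryIntegral} exactly, so the argument goes through once that citation is corrected.
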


\begin{thm}\label{thm:VariableTransformationOfOsiInt}
  Let $m,\tau \in \R$ and $N \in  \N_0 \cup \{ \infty \}$ with $N  >
n + \tau$. For $a \in \mathscr{A}^{m, N}_{\tau}(\R^{n} \times \R^{n})$ we have:
  \begin{align*}
    \osint  e^{-i(y + y_0) \cdot (\eta + \eta_0)} a(y + y_0,\eta + \eta_0) dy \dq \eta = \osint  e^{-iy \cdot \eta} a(y,\eta) dy \dq \eta.
  \end{align*}
\end{thm}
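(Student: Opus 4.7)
My plan is to reduce the identity to the independence of the oscillatory integral from the shape of the cutoff function. Fix an arbitrary $\chi \in \mathcal{S}(\RnRn)$ with $\chi(0,0) = 1$. The first step is to interpret the left-hand side as a genuine oscillatory integral in standard form. Expanding $(y+y_0)\cdot(\eta+\eta_0) = y\cdot\eta + y\cdot\eta_0 + y_0\cdot\eta + y_0\cdot\eta_0$ and setting
$$b(y,\eta) := e^{-iy\cdot\eta_0 - iy_0\cdot\eta - iy_0\cdot\eta_0}\, a(y+y_0, \eta+\eta_0),$$
Peetre's inequality $\<y+y_0\>^\tau \leq C\<y_0\>^{|\tau|}\<y\>^\tau$ together with the Leibniz rule gives $b \in \mathscr{A}^{m,N}_\tau(\RnRn)$ (with constants depending on $y_0,\eta_0$), because $\p_\eta^\alpha e^{-iy_0\cdot\eta}$ and $\p_y^\beta e^{-iy\cdot\eta_0}$ only contribute the bounded factors $(-iy_0)^\alpha$ and $(-i\eta_0)^\beta$. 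Hence Theorem \ref{thm:ExistenceOfOscillatoryIntegral} applies and the LHS equals
$$\lim_{\e \to 0} \iint \chi(\e y, \e\eta)\, e^{-i(y+y_0)\cdot(\eta+\eta_0)} a(y+y_0,\eta+\eta_0)\, dy\,\dq\eta.$$

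For each fixed $\e > 0$ the integrand is Schwartz in $(y,\eta)$, so the integral is absolutely convergent and the substitution $y \mapsto y - y_0$, $\eta \mapsto \eta - \eta_0$ (unit Jacobian) is legitimate, yielding
$$\lim_{\e \to 0} \iint \chi_\e(y,\eta)\, e^{-iy\cdot\eta} a(y,\eta)\, dy\,\dq\eta, \qquad \chi_\e(y,\eta) := \chi(\e(y-y_0),\, \e(\eta - \eta_0)).$$
Since $\chi_\e \cdot a$ is again Schwartz-dominated for each fixed $\e > 0$, this ordinary integral coincides with the oscillatory integral $\osint e^{-iy\cdot\eta} \chi_\e(y,\eta) a(y,\eta)\, dy\,\dq\eta$ (the cutoff in the definition of the latter may be inserted by dominated convergence).

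It remains to apply Corollary \ref{kor:VertauschbarkeitVonOsziIntUndLimes} to the family $(\chi_\e \cdot a)_{\e \in (0,1]}$. By the Leibniz rule, $\p_\eta^\alpha \p_y^\beta(\chi_\e a)$ is a sum of products in which any derivative hitting $\chi_\e$ produces a factor $\e$ times a bounded derivative of $\chi$ evaluated at $(\e(y - y_0), \e(\eta - \eta_0))$; the remaining derivatives act on $a$. This yields a uniform estimate $|\p_\eta^\alpha \p_y^\beta (\chi_\e a)| \leq C_{\alpha,\beta} \<\eta\>^m \<y\>^\tau$ for all $|\alpha| \leq N$. Moreover, as $\e \to 0$ each Leibniz term with a derivative on $\chi_\e$ carries a vanishing factor $\e^{\geq 1}$, while the surviving term $\chi_\e(y,\eta)\, \p_\eta^\alpha \p_y^\beta a(y,\eta)$ converges pointwise to $\p_\eta^\alpha \p_y^\beta a(y,\eta)$ since $\chi_\e(y,\eta) \to \chi(0,0) = 1$. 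Corollary \ref{kor:VertauschbarkeitVonOsziIntUndLimes} then delivers the desired identity. The main obstacle is the bookkeeping in the first step—ensuring that the translated amplitude really lies in $\mathscr{A}^{m,N}_\tau(\RnRn)$ so that the LHS is a legitimate oscillatory integral in the sense of Theorem \ref{thm:ExistenceOfOscillatoryIntegral}; once this is in hand, the rest is an $\e$-perturbation of the very definition of the oscillatory integral.
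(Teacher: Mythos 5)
Your proof is correct. The paper itself does not write out an argument for this theorem, instead referring to Kumano-Go's Theorem~6.8 and Corollary~3.10 of the cited PDO textbook together with Remarks~\ref{bem:eFunktion} and~\ref{bem:Absch}; your argument is the standard one those references use, and it is implemented soundly here. In particular, the key points are all handled: you first rewrite the phase so that the left-hand side is a legitimate oscillatory integral of $b(y,\eta)=e^{-iy\cdot\eta_0-iy_0\cdot\eta-iy_0\cdot\eta_0}a(y+y_0,\eta+\eta_0)\in\mathscr{A}^{m,N}_\tau$ (Peetre's inequality gives the amplitude bounds, and the exponential factors only contribute bounded derivatives so the $|\alpha|\leq N$ constraint on $\eta$-derivatives of $a$ is preserved); you then perform the linear change of variables at fixed $\e$, where the integral converges absolutely; and finally you apply Corollary~\ref{kor:VertauschbarkeitVonOsziIntUndLimes} to the uniformly bounded family $\chi_\e a$, whose derivatives converge pointwise to those of $a$ because every Leibniz term in which a derivative falls on $\chi_\e$ carries a factor $\e^{\ge 1}$. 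The only cosmetic point is that Corollary~\ref{kor:VertauschbarkeitVonOsziIntUndLimes} is stated for sequences indexed by $j\in\N$, so formally one should pass to an arbitrary sequence $\e_j\to 0$; this changes nothing.
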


\section{Pseudodifferential Operators} \label{Kapitel: PDO}
\subsection{Properties of Pseudodifferential Operators} \label{Section:PropertiesOfPDO}

For derivatives of non-smooth symbols we are able to verify the next statement:

\begin{lemma}\label{lemma:boundOfSymbolsOnCs}
  Let $m\in \N_0$, $M \in \N_0 \cup \{ \infty \}$ and $0<s\leq 1$. Additionally let $\mathscr{B} \subseteq C^{m,s}S^0_{0,0}(\RnRn; M)$ be a bounded subset. Considering $\alpha, \gamma \in \N_0^n$ with $|\gamma| \leq M-1$ and $|\alpha| \leq m$,  the set $\{ \p^{\alpha}_x \pa{\gamma} a : a \in \mathscr{B} \} \subseteq C^{0,s}(\RnRn)$ is bounded.
\end{lemma}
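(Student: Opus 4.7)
Plan: The approach is to prove Hölder-$s$ continuity of $\p^\alpha_x \p^\gamma_\xi a$ separately in $x$ (uniformly in $\xi$) and in $\xi$ (uniformly in $x$), and then to combine the two one-variable estimates via the triangle inequality. All constants will depend only on the finitely many seminorms bounding $\mathscr{B}$, so the resulting bound will be uniform over $\mathscr{B}$.

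First I would unpack the hypothesis: for every $a\in\mathscr{B}$, every $|\tilde{\alpha}|\leq m$ and every $|\tilde{\gamma}|\leq M$, the derivative $\p^{\tilde{\alpha}}_x\p^{\tilde{\gamma}}_\xi a$ is uniformly bounded on $\RnRn$, and when $|\tilde{\alpha}|=m$ there is a uniform Hölder-$s$ seminorm estimate in $x$, uniform in $\xi$. From this the bound in the $x$-direction for $\p^\alpha_x\p^\gamma_\xi a$ follows at once: if $|\alpha|=m$ it is contained in the hypothesis, and if $|\alpha|<m$ then $\p^{\alpha+e_j}_x\p^\gamma_\xi a$ is bounded, so the fundamental theorem of calculus yields a Lipschitz estimate in $x$ which I pass to Hölder-$s$ by splitting into $|h|\leq 1$ (using $|h|\leq |h|^s$) and $|h|\geq 1$ (using the uniform sup bound together with $1\leq |h|^s$).

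Next I would handle the $\xi$-direction, which is exactly where the slack $|\gamma|\leq M-1$ is used. Since $|\gamma|+1\leq M$ and $|\alpha|\leq m$, the derivative $\p^\alpha_x \p^{\gamma+e_j}_\xi a$ is uniformly bounded over $\mathscr{B}$, so the fundamental theorem of calculus gives
\[
  |\p^\alpha_x \p^\gamma_\xi a(x,\xi+h)-\p^\alpha_x \p^\gamma_\xi a(x,\xi)|\leq C|h|
\]
uniformly in $x$ and over $\mathscr{B}$. The conversion from Lipschitz to Hölder-$s$ proceeds exactly as in the $x$-step: for $|h|\leq 1$ via $|h|\leq |h|^s$, and for $|h|\geq 1$ via the sup bound combined with $1\leq |h|^s$. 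This is where the $S^0_{0,0}$-assumption (no allowed $\xi$-growth of any derivative) actually plays its role.

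Finally the splitting
\begin{align*}
  \p^\alpha_x\p^\gamma_\xi a(x+h_1,\xi+h_2)-\p^\alpha_x\p^\gamma_\xi a(x,\xi) ={}& \p^\alpha_x\p^\gamma_\xi a(x+h_1,\xi+h_2)-\p^\alpha_x\p^\gamma_\xi a(x,\xi+h_2)\\
  &{}+\p^\alpha_x\p^\gamma_\xi a(x,\xi+h_2)-\p^\alpha_x\p^\gamma_\xi a(x,\xi)
\end{align*}
together with the two one-variable bounds yields $C(|h_1|^s+|h_2|^s)\leq C'|(h_1,h_2)|^s$, which is the required joint Hölder-$s$ estimate with a constant depending only on the seminorms defining the boundedness of $\mathscr{B}$. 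I do not anticipate any serious obstacle; the one point worth being careful about is the asymmetric use of the two hypotheses—$x$-regularity is already encoded in the symbol class, whereas $\xi$-regularity has to be obtained by spending one of the available $\xi$-derivatives, which is exactly what forces the statement to require $|\gamma|\leq M-1$ rather than $|\gamma|\leq M$.
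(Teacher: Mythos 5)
Your proof is correct and follows essentially the same path as the paper's: the paper also separates the joint Hölder estimate into an $x$-part coming directly from the $C^{m,s}$-boundedness of $\pa{\gamma}a(\cdot,\xi)$ (its estimate (21)) and a $\xi$-part obtained by spending one extra $\xi$-derivative via the fundamental theorem of calculus combined with the uniform sup bound for $|\xi-\eta|\ge 1$ (its estimate (24)), and then combines them by the triangle inequality. The only presentational difference is that you spell out the Lipschitz-to-Hölder reduction for the $|\alpha|<m$ case in the $x$-direction, which the paper subsumes into its $C^{m,s}$ seminorm inequality.
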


\begin{proof}
   First of all we choose arbitrary $\alpha, \gamma \in \N_0^n$ with $|\gamma| \leq M-1$ and $|\alpha| \leq m$. Since $\mathscr{B} \subseteq C^{m,s}S^0_{0,0}(\RnRn; M)$ is a bounded subset,
  we get that
  \begin{align}
    \hspace{-3mm}&\{ \p^{\alpha}_x \pa{ \gamma} a : a \in \mathscr{B} \} \subseteq C^0_b(\RnRn) \text{ is bounded and}, \label{20}
\\
    \hspace{-3mm}&\sup_{(x,\xi)\neq (y,\eta)} \hspace{-3mm}\frac{| \p^{\alpha}_x \pa{ \gamma} a(x,\eta)- \p^{\alpha}_x \pa{ \gamma} a(y,\eta)|}{|(x,\xi) - (y,\eta)|^{s}} 
    \leq  \sup_{\eta} \| \pa{ \gamma} a(.,\eta) \|_{C^{m,s}(\Rn)} < C_{\gamma} \hspace{1mm} \forall a \in \mathscr{B}. \label{21}
  \end{align}
  On account of the fundamental theorem of calculus in the case $|\xi-\eta|<1$ with $\xi \neq \eta$ and because of (\ref{20}) for $|\xi-\eta|\geq 1$ we can show
  \begin{align}\label{24}
    \sup_{(x,\xi)\neq (y,\eta)} \frac{| \p^{\alpha}_x \pa{ \gamma} a(x,\xi)- \p^{\alpha}_x \pa{\gamma} a(x,\eta)|}{|(x,\xi) - (y,\eta)|^{s}}   
    \leq C_{\gamma} \qquad \text{for all } a \in \mathscr{B}.
  \end{align}
  Collecting the estimates (\ref{20}),(\ref{21}) and (\ref{24}) we finally obtain the claim.
\end{proof}

Now let us mention some boundedness results for pseudodifferential operators needed later on. 

\begin{thm} \label{thm:stetigAufS}
  Let $p \in \Sn{m}{1}{0}$ with $m \in \R$. Then 
  \begin{align*}
    p(x,D_x): \s \rightarrow \s
  \end{align*}
  is a bounded mapping. More precisely, for every $k \in \N_0$ we can show 
  \begin{align*}
    |p(x,D_x)f|_{k,\mathcal{S}} \leq C_k |p|^{(m)}_k |f|_{\tilde{m}, \mathcal{S}} \qquad \textrm{for all } f \in \s,
  \end{align*}
  where 
  $\tilde{m}:= \max \{ 0, m + 2(n+1) + k\}$ if $m \in \Z$ and $\tilde{m}:= \max \{ 0, \lfloor m \rfloor + 2n+3 + k\}$ else.
\end{thm}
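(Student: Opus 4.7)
The plan is to control $\sup_x |x^\alpha \partial_x^\beta [p(x,D_x)f](x)|$ for every $|\alpha|,|\beta|\leq k$ by $C_k |p|^{(m)}_k$ times a Schwartz seminorm of $f$ of controlled order, working directly with the oscillatory integral
\begin{align*}
  p(x,D_x)f(x) = \intr e^{ix\cdot\xi}\, p(x,\xi)\, \hat f(\xi)\, \dq \xi.
\end{align*}
First, I would differentiate under the integral: applying the Leibniz rule to $\partial_x^\beta[e^{ix\cdot\xi}p(x,\xi)]$ and using $\partial_{x_j} e^{ix\cdot\xi}=i\xi_j e^{ix\cdot\xi}$ gives
\begin{align*}
  \partial_x^\beta [p(x,D_x)f(x)] = \sum_{\beta_1+\beta_2=\beta} \binom{\beta}{\beta_1} \intr e^{ix\cdot\xi} (i\xi)^{\beta_2} (\partial_x^{\beta_1} p)(x,\xi)\, \hat f(\xi)\, \dq\xi.
\end{align*}
Then the factor $x^\alpha$ is absorbed via $x^\alpha e^{ix\cdot\xi}=(-i\partial_\xi)^\alpha e^{ix\cdot\xi}$ and $|\alpha|$ integrations by parts in $\xi$ (the boundary contributions vanish because $\hat f\in\s$), after which a further Leibniz expansion distributes $(i\partial_\xi)^\alpha$ across $(i\xi)^{\beta_2}$, $\partial_x^{\beta_1}p(x,\xi)$ and $\hat f(\xi)$.

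The resulting integrand is a finite sum of terms indexed by $\alpha_1+\alpha_2+\alpha_3=\alpha$; using $|\partial_\xi^{\alpha_1}\xi^{\beta_2}|\leq C\langle\xi\rangle^{|\beta_2|-|\alpha_1|}$ and the symbol estimate $|\partial_x^{\beta_1}\partial_\xi^{\alpha_2}p(x,\xi)|\leq |p|^{(m)}_k \langle\xi\rangle^{m-|\alpha_2|}$, these terms are pointwise bounded by
\begin{align*}
  C\, |p|^{(m)}_k\, \langle\xi\rangle^{m+|\beta_2|-|\alpha_1|-|\alpha_2|}\, \bigl|\partial_\xi^{\alpha_3}\hat f(\xi)\bigr|.
\end{align*}
For the last factor I use $\partial_\xi^{\alpha_3}\hat f = \mathscr{F}[(-ix)^{\alpha_3}f]$ together with $\langle\xi\rangle^{2j}\hat v(\xi)=\mathscr{F}[(1-\Delta)^j v](\xi)$ for $j\in\N_0$ and the elementary bounds $\|\hat g\|_{L^\infty}\leq\|g\|_{L^1}\leq C \sup_y \langle y\rangle^{n+1}|g(y)|$. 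A further Leibniz expansion of $(1-\Delta)^j(x^{\alpha_3}f)$ yields
\begin{align*}
  \langle\xi\rangle^{2j}\bigl|\partial_\xi^{\alpha_3}\hat f(\xi)\bigr| \leq C_j\, |f|_{\max(n+1+|\alpha_3|,\, 2j),\,\s}.
\end{align*}
Choosing $2j$ as the smallest even integer exceeding $n+m+k$ ensures that the remaining $\xi$-integral $\int\langle\xi\rangle^{m+|\beta_2|-|\alpha_1|-|\alpha_2|-2j}\,d\xi$ converges for every admissible tuple, and summing the finitely many contributions gives the stated estimate.

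The main obstacle is purely bookkeeping: one has to verify that in the worst case the Schwartz seminorm order of $f$ produced by the argument is bounded by the announced $\tilde m$. When $m\in\Z$, parity rounds $2j$ up to at most $n+m+k+2$, and together with the $|\alpha_3|\leq k$ extra derivatives and the $n+1$ weight from the $L^1$-bound this accounts for $m+2(n+1)+k$ in the statement. When $m\notin\Z$, rounding $n+m+k$ up to the nearest integer absorbs one additional unit (since $\lceil m\rceil = \lfloor m\rfloor+1$), producing the claimed $\lfloor m\rfloor+2n+3+k$. Apart from this arithmetic, the argument is an entirely standard combination of Leibniz, integration by parts and the trivial Fourier bound $\|\hat v\|_{L^\infty}\leq\|v\|_{L^1}$; no analytic subtlety arises.
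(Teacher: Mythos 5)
Your proof is the standard direct argument — differentiate under the integral, absorb $x^\alpha$ via $\xi$-integration by parts, Leibniz, the symbol estimate, and the $L^1$ Fourier bound applied to $\langle\xi\rangle^{2j}\partial_\xi^{\alpha_3}\hat f=\mathscr{F}[(1-\Delta)^j((-ix)^{\alpha_3}f)]$ — which is precisely the route in the cited reference \cite[Theorem 3.6]{PDO}; the paper gives no independent proof of this statement, so your approach matches the source. The one caveat concerns the concluding bookkeeping: your argument actually requires seminorm order $\max(n+1+k,\,2j)$ with $2j$ the smallest even integer exceeding $n+m+k$, which is dominated by $m+2(n+1)+k$ when $m\geq -(n+1)$ (so the estimate follows) but is larger than $\max\{0,\,m+2(n+1)+k\}$ when $m$ is very negative, so the last paragraph's claim that the arithmetic ``accounts for'' the announced $\tilde m$ is not literally verified — this should be reconciled with the precise Schwartz-seminorm convention rather than waved through.
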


We refer to e.\,g.\,\cite[Theorem 3.6]{PDO} for the proof. Non-smooth pseudodifferential operators
with coefficients in a Banach space $X$ with $C^{\infty}_c(\Rn) \subseteq X \subseteq C^0(\Rn)$, see e.\,g.\,\cite{Taylor2} for the definition,
have similar properties if the next estimate holds for some $N \in \N$ and $C_{\tilde{m},\tau}>0$:
\begin{align}\label{eqVonBemerkung}
    \|e_{\xi} \cdot a(.,\xi) \|_{X} \leq C_{\tilde{m},\tau} \<{\xi}^{ N } \| a(.,\xi) \|_{X} \qquad \text{for all } \xi \in \Rn,
\end{align}
where $a$ denotes the symbol of the pseudodifferential operator. In the next remark we mention some spaces, where the previous estimate is fulfilled:

\begin{bem}\label{bem:AbschNormMitEFunktion}
  Let $X \in \{ C^{\tilde{m},\tau}, C^{\tilde{m} + \tau}_{\ast}, H^{\tilde{m}}_q, W^{\tilde{m}, q}_{uloc} \}$ with $\tilde{m} \in \N_0$, $0< \tau \leq 1$ and $1<q < \infty$. Additionally we assume $\delta=0$ in the case $X \notin \{  C^{\tilde{m},\tau}, C^{\tilde{m} + \tau}_{\ast} \}$ and $\tilde{m}>n/q$ if $X \in \{ H^{\tilde{m}}_q, W^{\tilde{m}, q}_{uloc} \}$. For $0 \leq \rho, \delta \leq 1$ and $M \in \N_0 \cup \{ \infty \}$ we choose an arbitrary $a \in X \Snn{m}{\rho}{\delta}{M}$. Then inequality (\ref{eqVonBemerkung}) holds for  $N = \tilde{m} +2$ in the case $X=C^{\tilde{m} + \tau}_{\ast}$, for $N= \tilde{m}+1$ in the case $X=C^{\tilde{m}, \tau}$ and for $N= \tilde{m}$ else. 
  We refer to \cite[Definition 4.11]{Diss} for the definition of the uniformly local Sobolev Spaces $W^{\tilde{m}, q}_{uloc} $.
\end{bem}

\begin{proof}
  For $X \in \{ H^{\tilde{m}}_q, W^{\tilde{m}, q}_{uloc} \}$ the claim can be verified by using the definition of these spaces and the Leibniz rule.
  With the multiplication property
  \begin{align*}
    \|fg \|_{C^s_{\ast}} \leq C \|f\|_{C^s_{\ast}} \|g\|_{C^s_{\ast}} \qquad \text{for all } f,g \in C^s_{\ast}(\Rn),
  \end{align*}
  and the embedding $C_b^{\tilde{m} +\lfloor \tau \rfloor +1 }(\Rn) \hookrightarrow C_{\ast}^{\tilde{m} +\tau}(\Rn)$ at hand, we are in the position to prove the remark for $X=C^{\tilde{m},\tau}_{\ast}$:
  \begin{align*}
    \|e_{\xi} \cdot a(.,\xi) \|_{C^{\tilde{m},\tau}_{\ast}}  
    \leq C_{\tilde{m},\tau} \| e_{\xi}  \|_{ C_b^{\tilde{m} +\lfloor \tau \rfloor +1 } } \| a(.,\xi) \|_{C^{\tilde{m},\tau}_{\ast}} 
    \leq C_{\tilde{m},\tau} \<{\xi}^{ \tilde{m}+2 } \| a(.,\xi) \|_{C^{\tilde{m},\tau}_{\ast}}
  \end{align*}
  for all $\xi \in \Rn$. It remains to prove the case $X=C^{\tilde{m},\tau}$. Using the mean value theorem in the case $|x_1-x_2| \leq 1$, $x_1 \neq x_2$ we obtain
  \begin{align*}
    \max_{x_1 \neq x_2} \frac{|e^{ix_1 \cdot \xi} - e^{ix_2 \cdot \xi}|}{|x_1-x_2|^\tau} \leq 2 \<{\xi} \qquad \text{for all } \xi \in \Rn.
  \end{align*} 
    On account of the previous inequality we are able to verify the next estimate:  
  \begin{align}\label{eq59}
    \max_{|\alpha| \leq \tilde{m}} \sup_{x_1 \neq x_2} \frac{|e^{ix_1 \cdot \xi} \p_x^{\alpha} a(x_1, \xi) -   e^{ix_2 \cdot \xi}  \p_x^{\alpha} a(x_2, \xi) |}{|x_1-x_2|^\tau} 
    \leq  C_{\tilde{m},\tau} \<{\xi} \|a(.,\xi)\|_{C^{\tilde{m},\tau}_{\ast}}
  \end{align}
  for all $ \xi \in \Rn$.  
  Moreover we are able to show
  \begin{align}\label{eq60}
    \|e_{\xi} \cdot a(.,\xi) \|_{C^{\tilde{m}}_{b}} \leq C_{\tilde{m}} \<{\xi}^{ \tilde{m} } \| a(.,\xi) \|_{C^{\tilde{m},\tau} } \qquad  \text{for all } \xi \in \Rn.
  \end{align}
  A combination of the inequalities (\ref{eq59}) and (\ref{eq60}) yields 
  \begin{align*}
    \|e_{\xi} \cdot a(.,\xi) \|_{C^{\tilde{m},\tau}} \leq C_{\tilde{m},\tau} \<{\xi}^{ \tilde{m}+1 } \| a(.,\xi) \|_{C^{\tilde{m},\tau}} \qquad \text{for all } \xi \in \Rn.
  \end{align*}
  \vspace*{-1cm}

\end{proof}

The previous remark enables us to prove the next boundedness result:

\begin{lemma}\label{lemma:stetigkeitInS}
  We consider a Banach space $X$ with $C^{\infty}_c(\Rn) \subseteq X \subseteq C^0(\Rn)$ such that inequality (\ref{eqVonBemerkung}) holds. Let $m\in \R$, $M \in \N_0 \cup \{ \infty \}$ and $\delta=0$ in the case $X \notin \{  C^{\tilde{m},\tau}, C^{\tilde{m} + \tau}_{\ast} \}$ and $0 \leq \rho,\delta \leq 1$ else. Assuming $p \in X\Sallgn{m}{\rho}{\delta}{n}{n}{M}$,  we obtain the continuity of $p(x, D_x): \s \rightarrow X$. 
\end{lemma}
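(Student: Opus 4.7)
The plan is to represent $p(x,D_x)u$ as a Bochner integral in $X$ and to bound the $X$-norm of the integrand pointwise in $\xi$, combining (\ref{eqVonBemerkung}) with the symbol estimate at $\alpha=\beta=0$ and the rapid decay of $\hat u$. Concretely, for $u \in \s$ I would start from
\begin{align*}
p(x,D_x)u(x) = \intr e^{ix\cdot\xi} p(x,\xi) \hat u(\xi) \dq\xi
\end{align*}
and rewrite it as $p(x,D_x)u = \intr (e_\xi \cdot p(\cdot,\xi))\, \hat u(\xi)\, \dq\xi$, viewing the integrand as an $X$-valued function of $\xi$.

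The key estimate comes by combining the hypothesis (\ref{eqVonBemerkung}),
\begin{align*}
\|e_\xi \cdot p(\cdot,\xi)\|_X \leq C_{\tilde m,\tau} \<{\xi}^{N} \|p(\cdot,\xi)\|_X,
\end{align*}
with the symbol bound $\|p(\cdot,\xi)\|_X \leq C\<{\xi}^{m}$ that is built into the definition of $X S^m_{\rho,\delta}(\RnRn;M)$. This yields
\begin{align*}
\|(e_\xi \cdot p(\cdot,\xi))\, \hat u(\xi)\|_X \leq C \<{\xi}^{N+m} |\hat u(\xi)|.
\end{align*}
Since $u\in \s$ implies $\hat u \in \s$, one has $|\hat u(\xi)| \leq C_k \<{\xi}^{-k}$ for any $k$, where $C_k$ is controlled by a Schwartz seminorm of $u$. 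Choosing $k \geq N+m+n+1$, the norm integral converges absolutely and I obtain
\begin{align*}
\|p(x,D_x)u\|_X \leq C \intr \<{\xi}^{N+m}|\hat u(\xi)|\, \dq\xi \leq \tilde C \, |u|_{k',\mathcal S},
\end{align*}
which is exactly the continuity $p(x,D_x): \s \to X$ that is claimed.

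The hardest point is justifying the Bochner interpretation, i.e., the strong measurability (actually continuity) of $\xi \mapsto e_\xi \cdot p(\cdot,\xi) \in X$. For the spaces listed in Remark \ref{bem:AbschNormMitEFunktion}, continuity of $\xi \mapsto e_\xi$ in $X$ is straightforward, and continuity of $\xi \mapsto p(\cdot,\xi)$ in $X$ follows from joint continuity of $p$ and $\pa{\gamma}p$ together with the uniform symbol bounds via a mean value argument. To keep the argument robust I would, if needed, first establish the norm inequality for $u \in \s$ with $\hat u \in C^\infty_c(\Rn)$ (where the integral is a classical $X$-valued Riemann integral of a continuous function), and then extend to arbitrary $u \in \s$ by density, using exactly the uniform bound derived above to pass to the limit.
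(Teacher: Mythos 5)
Your proposal is correct and follows essentially the same route as the paper's proof: rewrite $p(x,D_x)u$ as an $X$-valued integral over $\xi$, pull the norm inside, apply inequality (\ref{eqVonBemerkung}) together with the zeroth-order symbol bound $\|p(\cdot,\xi)\|_X \leq C\<{\xi}^m$, and absorb the polynomial growth using the rapid decay of $\hat u$ expressed via Schwartz seminorms. The paper states exactly this chain of estimates, $\|p(x,D_x)u\|_X \leq \int \|e_\xi p(\cdot,\xi)\|_X |\hat u(\xi)|\,\dq\xi \leq C\,|u|_{\hat m + 2(n+1),\mathcal S}$, and simply does not pause to justify the vector-valued (Bochner/Riemann) interpretation of the integral that you carefully flag; your additional density argument for that step is a correct, more cautious treatment of the same idea, not a different proof.
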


\begin{proof}
  Let $u \in \s$ be arbitrary. An application of $p \in X\Sallgn{m}{\rho}{\delta}{n}{n}{M}$, $u \in \s$ and Remark \ref{bem:AbschNormMitEFunktion} 
  yields
  \begin{align*}
    \| p(x, D_x) u(x) \|_X 
    &\leq \int \|e_{\xi} p(.,\xi) \|_X |\hat{u}(\xi)| \dq \xi 
    \leq C \int \<{\xi}^{-(n+1)} \dq \xi |\hat{u}|_{\hat{m} +(n+1), \mathcal{S} } \\
    &\leq C |u|_{\hat{m} +2(n+1), \mathcal{S} } \qquad \text{for all } x \in \Rn \text{ and some }\hat{m} \in \N.
  \end{align*}
  \vspace*{-1cm}

\end{proof}

In the case $X=C^{\tilde{m},\tau}$ this statement was already proven in \cite[Theorem 3.6]{Koeppl}. For a bounded subset of $\mathscr{B} \subseteq X\Sallgn{m}{\rho}{\delta}{n}{n}{M}$, where $X, m, \rho,\delta$ and $M$ are defined as in the previous lemma, we  are even able to improve the statement of Lemma \ref{lemma:stetigkeitInS}: Verifying the proof of Lemma \ref{lemma:stetigkeitInS} yields the boundedness of
\begin{align}\label{eq23}
  \left\{ p(x, D_x) : p \in \mathscr{B} \right\} \subseteq \mathscr{L}(\s; X).
\end{align}
In the literature such problems are mostly not investigated. Usually just boundedness results are shown in different cases. Verifying these proofs in order to get similar results as (\ref{eq23}) is often very complex. With the next lemma at hand, such problems are much easier to prove.  

\begin{lemma}\label{lemma:UnabhangigkeitVomSymbol}
  Let $N \in \N_0 \cup \{ \infty \}$ and $X$ be a Banach space with $C^{\infty}_c(\Rn) \subseteq X \subseteq C^0(\Rn)$. Additionally let $m$, $\rho$ and $\delta$ be as in the last lemma.
  We consider that $\mathscr{B}$ is the topological vector space $S^m_{\rho, \delta} (\RnRn)$ or $X S^m_{\rho, \delta} (\RnRn;N)$. In the case $\mathscr{B}=S^m_{\rho, \delta} (\RnRn)$ we set $N:=\infty$. Moreover, let $X_1,X_2$ be two Banach spaces with the following properties:
  \begin{enumerate}
    \item[i)] $\s \subseteq X_1, X_2 \subseteq \sd$,
    \item[ii)] $\s$ is dense in $X_1$ and in $ X'_2$,
    \item[iii)] $a(x,D_x) \in \mathscr{L}(X_1, X_2)$ for all $a \in \mathscr{B}$. 
  \end{enumerate}
  Then there is a $k \in \N$ with $k \leq N$ such that
  \begin{align*}
    \| a(x, D_x)f \|_{\mathscr{L}(X_1;X_2)} \leq C |a|^{(m)}_{k} \qquad \text{for all } a \in \mathscr{B}.
  \end{align*}
\end{lemma}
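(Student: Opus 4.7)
The plan is to apply the closed graph theorem to the linear map
\[ \Phi: \mathscr{B} \to \mathscr{L}(X_1, X_2), \qquad \Phi(a) := a(x, D_x), \]
which is well-defined by hypothesis (iii). With its natural family of seminorms $|\cdot|^{(m)}_k$, $k \in \N_0$ (truncated to $k \leq N$ in the case $\mathscr{B} = X S^m_{\rho, \delta}(\RnRn; N)$), $\mathscr{B}$ is a Fréchet space and $\mathscr{L}(X_1, X_2)$ is a Banach space, so once the graph of $\Phi$ is shown to be closed the closed graph theorem yields continuity of $\Phi$, which is precisely the asserted estimate $\| \Phi(a) \|_{\mathscr{L}(X_1; X_2)} \leq C |a|^{(m)}_k$ for some $k \leq N$.

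To verify that the graph is closed I would take $a_j \to a$ in $\mathscr{B}$ together with $\Phi(a_j) \to T$ in $\mathscr{L}(X_1, X_2)$, and identify $T$ with $a(x, D_x)$. The key intermediate statement is that for each fixed $u \in \s$ one has $a_j(x, D_x) u \to a(x, D_x) u$ in $\sd$. In the smooth case $\mathscr{B} = S^m_{\rho, \delta}(\RnRn)$ this follows from Theorem \ref{thm:stetigAufS} applied to $a_j - a$: the bound $|(a_j - a)(x, D_x) u|_{0, \mathcal{S}} \leq C |a_j - a|^{(m)}_{k_0} |u|_{\tilde{m}, \mathcal{S}}$ depends linearly on the symbol and even gives convergence in $\s$. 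In the non-smooth case the same linear dependence appears in the proof of Lemma \ref{lemma:stetigkeitInS}, producing $\|(a_j - a)(x, D_x) u\|_X \to 0$, and hence convergence in $\sd$ via the continuous inclusions $X \hookrightarrow C^0(\Rn) \hookrightarrow \sd$.

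Pairing with an arbitrary $\phi \in \s$, viewed as an element of $X_2'$ through the inclusion $\s \hookrightarrow X_2'$ that is induced by $X_2 \subseteq \sd$, yields
\begin{equation*}
  \skh{Tu}{\phi}{X_2; X_2'} = \lim_{j \to \infty} \skh{a_j(x, D_x) u}{\phi}{X_2; X_2'} = \skh{a(x, D_x) u}{\phi}{X_2; X_2'}
\end{equation*}
for every $u, \phi \in \s$. The density of $\s$ in $X_2'$ then forces $T u = a(x, D_x) u$ in $X_2$ for each $u \in \s$, and the density of $\s$ in $X_1$ combined with the continuity of both $T$ and $a(x, D_x)$ as maps $X_1 \to X_2$ extends this identity to all of $X_1$. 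Hence the graph is closed.

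The main technical point I anticipate is confirming that the bounds in Theorem \ref{thm:stetigAufS} and Lemma \ref{lemma:stetigkeitInS} depend on the symbol only through a single seminorm $|a|^{(m)}_{k_0}$ and in a linear fashion, so that convergence of $(a_j)$ in $\mathscr{B}$ propagates to convergence of $a_j(x, D_x) u$ at fixed $u \in \s$; this is a direct inspection of the cited proofs. Once this is in place, the remainder is routine functional analysis, and the closed graph theorem delivers the concrete index $k \leq N$ at the end.
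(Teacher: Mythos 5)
Your proposal is correct, but it takes a genuinely different functional-analytic route from the paper. The paper's proof is a Banach--Steinhaus argument: one fixes $f \in \s$, $g \in \s$ with $\|f\|_{X_1} \le 1$, $\|g\|_{X_2'}\le 1$, defines the scalar functionals $\op_{f,g}(a) := \skh{a(x,D_x)f}{g}{X_2;X_2'}$ on $\mathscr{B}$, observes from (iii) that each orbit $\{\op_{f,g}(a)\}$ is bounded, invokes uniform boundedness on the Fréchet space $\mathscr{B}$ to get equicontinuity, and extracts the single semi-norm $|\cdot|^{(m)}_k$ that way; density of $\s$ in $X_1$ and $X_2'$ then closes the argument. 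You instead pass through the closed graph theorem applied to $\Phi: a \mapsto a(x,D_x)$ as a map into the Banach space $\mathscr{L}(X_1,X_2)$. The two are dual in spirit and require essentially the same underlying input --- namely, that convergence of symbols in $\mathscr{B}$ propagates to convergence of $a(x,D_x)u$ for each fixed $u \in \s$, which you justify via the linear dependence of the estimate in Theorem~\ref{thm:stetigAufS} resp.\ Lemma~\ref{lemma:stetigkeitInS} on the symbol semi-norm. In fact, the paper's application of Banach--Steinhaus silently requires the continuity of each $\op_{f,g}$ on $\mathscr{B}$, which is exactly the lemma-level fact you make explicit when verifying closedness of the graph; so your proof is a little longer on the page but also a bit more honest about where the symbol-continuity is used. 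One small imprecision: you write $C^0(\Rn)\hookrightarrow \sd$, which is false in general (unbounded continuous functions need not be tempered); what you actually have, for the admissible $X$ with $X\subseteq C^0(\Rn)$ and $X\hookrightarrow L^\infty(\Rn)$ as in Remark~\ref{bem:AbschNormMitEFunktion}, is $X \hookrightarrow L^\infty(\Rn) \hookrightarrow \sd$, and that chain is what justifies the passage to $\sd$-convergence.
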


\begin{proof}
  First of all we define for $f,g \in \s$ with $\|f\|_{X_1} \leq 1$ and $\|g\|_{X'_2} \leq 1$ the operator $\op_{f,g} : \mathscr{B} \rightarrow \C$ by $\op_{f,g}(a):= \skh{a(x, D_x)f}{g}{X_2, X'_2}$. Using $iii)$ we get the existence of a constant $C$, independent of $f,g \in \s$ with $\|f\|_{X_1} \leq 1$ and $\|g\|_{X'_2} \leq 1$, such that 
  \begin{align*}
    |\skh{a(x, D_x)f}{g}{X_2, X'_2}| 
    \leq C \left\| a(x, D_x)\right\|_{\mathscr{L}(X_1; X_2) } \left\| f \right\|_{X_1} \left\| g \right\|_{X_2'} 
    \leq C \left\| a(x, D_x)\right\|_{\mathscr{L}(X_1; X_2) }.
  \end{align*}
  Consequently the set
  \begin{align*}
    \left\{ \op(a)_{f,g} : f,g \in \s \text{ with } \|f\|_{X_1} \leq 1 \text{ and } \|g\|_{X'_2} \leq 1  \right\} \subseteq \C
  \end{align*}
  is bounded for each $a \in \mathscr{B}$.
  An application of the theorem of Banach-Steinhaus, cf.\;e.g.\;\cite[Theorem 2.5]{Rudin} provides that
  \begin{align*}
    \left\{ \op_{f,g} : f,g \in \s \text{ with } \|f\|_{X_1} \leq 1 \text{ and } \|g\|_{X'_2} \leq 1  \right\}
  \end{align*}
  is equicontinuous. With the equicontinuity of the previous set at hand, we get the existence of a $k \in \N$ with $k \leq N$ and a constant $C>0$ such that
  \begin{align*}
     |\op_{f,g}(a)| \leq C | a |^{(m)}_{k} \quad \text{for all }  a \in \mathscr{B}, f,g \in \s \text{ with } \|f\|_{X_1} \leq 1, \|g\|_{X'_2} \leq 1.
  \end{align*}
  Since $\s$ is dense in $X_1$ and in $ X'_2$, the previous inequality even holds for all $f \in X_1$ and $g \in X_2'$ with $\|f\|_{X_1} \leq 1$ and $\|g\|_{X'_2} \leq 1$. This implies the claim:
  \begin{align*}
    \| a(x, D_x) \|_{\mathscr{L}(X_1;X_2)} 
    &= \sup_{\|f\|_{X_1} \leq 1} \| a(x, D_x)f \|_{X_2}
    = \sup_{\|f\|_{X_1} \leq 1} \sup_{\|g\|_{X_2'} \leq 1} |\op_{f,g}(a)| 
    \leq C |a|^{(m)}_{k}
  \end{align*}
  for all $a \in \mathscr{B}$.
\end{proof}

Next we summarize boundedness results for pseudodifferential operators as maps between two Bessel potential spaces. In the smooth case we refer 
e.g.\;to \cite[Theorem 5.20]{PDO}.

\begin{thm}\label{thm:stetigInBesselPotRaum}
  Let $m\in \R$, $p \in \Sn{m}{1}{0}$ and $1<q<\infty$. Then $p(x,D_x)$ extends to a bounded linear operator
  \begin{eqnarray*}
    p(x,D_x): H_q^{s+m}(\Rn) \rightarrow H^s_q(\Rn) \qquad \textrm{for all } s \in \R.
  \end{eqnarray*} 
\end{thm}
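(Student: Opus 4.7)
The plan is to reduce the general case to the base case $s=0$, $m=0$ by means of order-reducing operators, and then prove $L^q$-boundedness of zero-order pseudodifferential operators with symbols in $S^0_{1,0}(\RnRn)$ through Calderón--Zygmund singular integral theory.

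First I would exploit the fact that $\<{D_x}^t = \op(\<{\xi}^t) \in \op S^t_{1,0}(\RnRn)$ induces an isometric isomorphism $\<{D_x}^t: H^{s+t}_q(\Rn) \to H^s_q(\Rn)$ by the very definition of the Bessel potential spaces. Writing
\begin{align*}
  p(x,D_x) = \<{D_x}^{-s} \circ \bigl(\<{D_x}^{s} \circ p(x,D_x) \circ \<{D_x}^{-(s+m)}\bigr) \circ \<{D_x}^{s+m},
\end{align*}
the outer factors supply the required shifts in Sobolev regularity. By the composition calculus for smooth pseudodifferential operators (Remark \ref{bem:SymbolOfIteratedCommutator} and standard symbol calculus), the middle operator is a pseudodifferential operator with symbol in $S^0_{1,0}(\RnRn)$. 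Thus it suffices to prove the statement for $m=s=0$, that is: every $q(x,D_x)$ with $q \in S^0_{1,0}(\RnRn)$ is bounded on $L^q(\Rn)$ for $1<q<\infty$.

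For the base case I would first establish $L^2$-boundedness, which in this range $\rho=1,\delta=0$ is straightforward: by writing $q = q_1 \# q_2 + r$ with lower-order terms or by a direct dyadic decomposition (alternatively via the Calderón--Vaillancourt theorem), one reduces to showing that a finite number of the symbol seminorms $|q|^{(0)}_k$ control $\|q(x,D_x)\|_{\mathscr{L}(L^2)}$. Next I would analyse the Schwartz kernel
\begin{align*}
  K(x,y) = \intr e^{i(x-y)\cdot\xi} q(x,\xi) \,\dq\xi,
\end{align*}
understood as an oscillatory integral for $x\neq y$. Using the standard integration-by-parts trick $e^{i(x-y)\cdot\xi} = |x-y|^{-2N}(-\Delta_\xi)^N e^{i(x-y)\cdot\xi}$ together with the estimates $|\pax{\alpha}{\beta} q(x,\xi)| \leq C_{\alpha,\beta}\<{\xi}^{-|\alpha|}$ one obtains decay $|K(x,y)| \leq C_N |x-y|^{-n-\delta}$ and analogous decay estimates for $\nabla_x K$ and $\nabla_y K$ of order $|x-y|^{-n-1}$ off the diagonal. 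These are precisely the pointwise bounds needed to verify the Hörmander integral condition
\begin{align*}
  \int_{|x-y|\geq 2|x-x'|} |K(x,y)-K(x',y)|\,dy \leq C
\end{align*}
(and its transpose). Combined with the $L^2$-boundedness, the Calderón--Zygmund theorem for singular integrals of non-convolution type then yields $L^q$-boundedness for all $1<q<\infty$, and weak-$(1,1)$ continuity as well.

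The main obstacle is really the $L^q$-boundedness for $q\neq 2$, where one must carefully verify the Calderón--Zygmund kernel conditions; the $L^2$ step and the reduction via order-reducing operators are comparatively routine. A secondary technical point is ensuring that the order-reducing calculus step actually produces a symbol in $S^0_{1,0}(\RnRn)$ with appropriate seminorm bounds, but this is standard asymptotic expansion combined with Lemma \ref{lemma:UnabhangigkeitVomSymbol} applied to the remainder. Finally, the extension of $p(x,D_x)$ from $\s$ (where Theorem \ref{thm:stetigAufS} guarantees its action) to all of $H^{s+m}_q(\Rn)$ follows by density, since $\s$ is dense in every Bessel potential space for $1<q<\infty$.
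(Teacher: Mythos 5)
The paper does not give its own proof of this theorem; it is stated by reference only, citing \cite[Theorem 5.20]{PDO}. Your sketch is therefore not competing with an argument in the paper, but it is the standard textbook proof (conjugation by order-reducing operators $\<{D_x}^t$ to reduce to $\op S^0_{1,0}(\RnRn) \subseteq \mathscr{L}(L^q)$, then $L^2$-boundedness plus Calderón--Zygmund kernel estimates), which is in all likelihood exactly what the cited reference does. The reduction step and the appeal to density of $\s$ in $H^{s+m}_q(\Rn)$ are correct. One small imprecision worth flagging: for a general symbol in $S^0_{1,0}(\RnRn)$ the kernel only satisfies $|K(x,y)|\leq C|x-y|^{-n}$ near the diagonal, not $|x-y|^{-n-\delta}$ for a positive $\delta$; what you actually need and what you correctly invoke is the gradient bound $|\nabla_y K(x,y)|\leq C|x-y|^{-n-1}$, which yields the H\"ormander integral condition, so the argument stands.
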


\begin{thm} \label{thm:stetigInHoelderRaum}
  Let $m \in \R$, $0 \leq \delta \leq \rho \leq 1$ with $\rho > 0$ and $1 < p < \infty $. Additionally let $\tau > \frac{1- \rho}{1-\delta} \cdot \frac{n}{2}$ if $\rho <1$ and $\tau>0$ if $\rho =1$ respectively. Moreover, let $N \in \N \cup \{ \infty \}$ with $N> n/2$ for $2 \leq p < \infty$ and $N>n/p$ else. Denoting $k_p := (1- \rho)n \left| 1/2 - 1/p \right|$, let $\mathscr{B} \subseteq C^{\tau}_{\ast} \Snn{m- k_p}{\rho}{\delta}{N}$ be a bounded subset. Then for each real number $s$ with the property
  $$(1-\rho)\frac{n}{p}-(1-\delta)\tau < s < \tau$$
  there is a constant $C_s>0$, independent of $a \in \mathscr{B}$, such that
  \begin{align*}
    \| a(x, D_x)f \|_{ H_p^{s} } \leq C_s \| f \|_{ H_p^{s+m} } \qquad \text{for all } f \in H_p^{s+m}(\Rn) \text{ and } a \in \mathscr{B}.
  \end{align*}
\end{thm}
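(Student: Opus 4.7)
My plan is to reduce the uniform bound on $\mathscr{B}$ to a single-symbol boundedness statement, and then apply Lemma \ref{lemma:UnabhangigkeitVomSymbol} to upgrade it. Once I know that for every individual $a \in C^{\tau}_{\ast} \Snn{m-k_p}{\rho}{\delta}{N}$ the operator $a(x,D_x)$ extends to a bounded map $H^{s+m}_p(\Rn) \rightarrow H^s_p(\Rn)$, the lemma applies with $X = C^{\tau}_{\ast}(\Rn)$, $X_1 = H^{s+m}_p(\Rn)$, $X_2 = H^s_p(\Rn)$: the inclusions $\s \subseteq X_1, X_2 \subseteq \sd$ hold, and $\s$ is dense both in $X_1$ and in $X_2' = H^{-s}_{p'}(\Rn)$ because $1 < p < \infty$. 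The lemma then furnishes a $k \leq N$ and a constant $C$ with $\| a(x,D_x) \|_{\mathscr{L}(H^{s+m}_p;H^s_p)} \leq C |a|^{(m-k_p)}_{k}$ for every $a \in \mathscr{B}$, and since $\mathscr{B}$ is bounded in the symbol class, the seminorms $|a|^{(m-k_p)}_k$ are uniformly controlled, yielding the claim.

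For the individual boundedness I would use symbol smoothing in the spirit of Bony and Marschall. Fixing a dyadic partition of unity $(\varphi_j)_{j\in\N_0}$, I regularise $a$ in the $x$-variable at the $\xi$-dependent scale $\<{\xi}^{-\delta_1}$ with some $\delta_1 \in (\delta,1)$ (in the case $\rho=1$ take $\delta_1$ slightly below $1$), obtaining a decomposition $a = a^\sharp + a^\flat$. A standard computation shows $a^\sharp \in \Snn{m-k_p}{\rho}{\delta_1}{\infty}$, and since the range of $s$ and the loss $k_p$ are precisely those of Fefferman's $L^p$-theorem for Hörmander symbols with $\rho>0$, $\delta_1<1$, one obtains $a^\sharp(x,D_x) \in \mathscr{L}(H^{s+m}_p, H^s_p)$. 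The remainder $a^\flat$ lies in $C^{\tau}_{\ast}\Snn{m-k_p-(\delta_1-\delta)\tau}{\rho}{\delta_1}{N}$, i.e., has improved $\xi$-order by $(\delta_1-\delta)\tau$ at the cost of limited $x$-regularity. Decomposing $a^\flat(x,D_x)u = \sum_j a^\flat(x,D_x)\mathscr{F}^{-1}[\varphi_j \hat u]$, estimating each dyadic piece by means of the Hölder norm of $a^\flat(\cdot,\xi)$ (which by Remark \ref{bem:AbschNormMitEFunktion} and Lemma \ref{lemma:stetigkeitInS} controls the $H^s_p$-norm of the dyadic output), and summing the resulting geometric series converges precisely when $(1-\rho)\frac{n}{p}-(1-\delta)\tau < s < \tau$, which is the admissible range.

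The main obstacle is the remainder estimate for $a^\flat(x,D_x)$ across the whole admissible range of $s$: the upper bound $s<\tau$ reflects the finite Hölder regularity of the symbol, while the lower bound requires duality together with the $L^p$-loss $k_p$, and the two cases $p\leq 2$ and $p\geq 2$ have to be handled separately (as is already visible in the hypothesis $N>n/2$ respectively $N>n/p$). The rest of the argument — the splitting, the smooth-part estimate, and the reduction to uniform boundedness via Lemma \ref{lemma:UnabhangigkeitVomSymbol} — is essentially bookkeeping; once the single-symbol bound is in hand, the uniform version on $\mathscr{B}$ is automatic.
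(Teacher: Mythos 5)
Your reduction to the single-symbol statement via Lemma \ref{lemma:UnabhangigkeitVomSymbol} is exactly the paper's argument: choose $X_1 = H^{s+m}_p(\Rn)$, $X_2 = H^s_p(\Rn)$, use density of $\s$ in $H^{s+m}_p$ and $H^{-s}_{p'}$, and read off a seminorm bound $\|a(x,D_x)\|_{\mathscr{L}(X_1,X_2)} \leq C|a|^{(m-k_p)}_k$ uniform over $\mathscr{B}$.

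Where you diverge is in the treatment of the single-symbol boundedness. The paper does not prove it at all; it simply cites Marschall, namely Theorem 2.7 of \cite{Marschall} for $2 \leq p < \infty$ and Theorem 4.2 of \cite{Marschall} for $1 < p < 2$, noting that these give the map $a(x,D_x): H^{s+m}_p \to H^s_p$ for a single $a$. Your plan to re-derive this via symbol smoothing $a = a^\sharp + a^\flat$ with $a^\sharp \in S^{m-k_p}_{\rho,\delta_1}$ handled by Fefferman-type $L^p$-theory and $a^\flat$ handled dyadically is indeed the mechanism underlying Marschall's proofs, and the exponent bookkeeping you record (the loss $k_p$, the range $(1-\rho)n/p - (1-\delta)\tau < s < \tau$, the split $p\lessgtr 2$ visible in the hypothesis on $N$) is consistent with those references. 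But as you yourself note, your sketch of the $a^\flat$ remainder estimate is not a complete argument, and it amounts to reproving a result that is available in the literature; the paper's route is strictly shorter because it invokes the citation and spends its entire effort on the (genuinely new) uniformity claim, which is precisely what Lemma \ref{lemma:UnabhangigkeitVomSymbol} is designed for. If you keep your symbol-smoothing route you would still need to work out the dyadic remainder estimate in full and verify the two ranges of $p$ separately; if you instead cite Marschall as the paper does, all of that disappears and only the Banach–Steinhaus step remains.
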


\begin{proof}
  In the case $ 2 \leq p <\infty$ the theorem was shown in \cite[Theorem 2.7]{Marschall} for $\sharp \mathscr{B} = 1$. The case $1<p<2$ has been proved in \cite[Theorem 4.2]{Marschall} for $\sharp \mathscr{B} = 1$. Thus it remains to verify whether the constant $C_s$ is independent of $a \in \mathscr{B}$. We define $p'$ by $1/p + 1/p' = 1$. Since $\s$ is dense in $H^{s+m}_p(\Rn)$ and $H^{-s}_{p'}(\Rn)$,
  the theorem holds because of Lemma \ref{lemma:UnabhangigkeitVomSymbol}.
\end{proof}

In the case $\sharp \mathscr{B} = 1$, the previous theorem also holds for $p = 1$ or $p=\infty$, cf.\; \cite[Theorem 2.7 and Theorem 4.2 ]{Marschall}. \\

On account of Theorem 2.1 in \cite{Marschall} and  Lemma 2.9 in \cite{Marschall} the next boundedness results hold:

\begin{thm} \label{thm:1stetigInHoelderRaum00}
  Let $m \in \R$ and $\tau > \frac{n}{2}$. Moreover, let $N \in \N \cup \{ \infty \}$ with $N> n/2$. Additionally let $a \in C^{\tau}_{\ast} \Snn{m}{0}{0}{N}$. Then for each real number $s \in \left( \frac{n}{2}-\tau , \tau \right)$
  there is a constant $C_s>0$ such that
  \begin{align*}
    \| a(x, D_x)f \|_{ H_2^{s} } \leq C_s \| f \|_{ H_2^{s+m} } \qquad \text{for all } f \in H_2^{s+m}(\Rn).
  \end{align*}
\end{thm}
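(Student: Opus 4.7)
The plan is to obtain the statement as a direct consequence of Theorem 2.1 and Lemma 2.9 in \cite{Marschall}, essentially as stated in the sentence preceding the theorem. Note that Theorem \ref{thm:stetigInHoelderRaum} above does not cover our situation since it requires $\rho > 0$; the borderline case $\rho = \delta = 0$ is accessible only for $p = 2$ because the $L^2$ theory admits stronger cancellation via Plancherel. So the first step is to identify the present theorem as the specialization $p = 2$, $\rho = \delta = 0$ of the general non-smooth symbol calculus in \cite{Marschall}.

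First I would verify the numerical compatibility of the hypotheses. With $\rho = \delta = 0$ and $p = 2$ one has $k_p = (1-\rho) n | 1/2 - 1/p | = 0$, and the lower bound $(1-\rho) n/p - (1-\delta)\tau$ appearing in Theorem \ref{thm:stetigInHoelderRaum} becomes exactly $n/2 - \tau$, while the upper bound stays at $\tau$. Thus the admissible range $s \in (n/2 - \tau, \tau)$ in our theorem matches the range covered by Marschall's results, and the condition $\tau > n/2$ is exactly what is needed to make this range nonempty. The assumption $N > n/2$ on the order of $\xi$-smoothness of $a$ reflects Marschall's use of an $L^2$-type Sobolev embedding in the dyadic decomposition in $\xi$, where $H^{N}_2(\Rn) \hookrightarrow L^\infty(\Rn)$ becomes available.

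Next I would, following \cite[Thm.\ 2.1]{Marschall}, set up a Littlewood--Paley decomposition $a = \sum_j a_j$ in the $\xi$-variable, where each $a_j(x,\xi)$ is localized to $\<{\xi} \sim 2^j$. Using the Hölder--Zygmund regularity $\| \partial_\xi^\alpha a(\cdot,\xi) \|_{C^\tau_\ast} \leq C_\alpha \<{\xi}^{m}$, one further decomposes $a_j(\cdot,\xi)$ dyadically in $x$ and obtains bounds for the operator norms of the pieces on $H^{s+m}_2 \to H^s_2$; the range $(n/2 - \tau, \tau)$ guarantees summability of the resulting geometric series in $j$. Lemma 2.9 in \cite{Marschall} supplies the technical estimate on the paradifferential decomposition that makes this work at $\rho = 0$.

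The main obstacle is essentially bookkeeping: ensuring that Marschall's notational conventions (definition of the symbol class, choice of dyadic partition, identification of the admissible range of $s$) match ours and that his $p = 2$ argument indeed allows $\rho = \delta = 0$. Once this is checked, no new idea is required beyond citing \cite[Thm.\ 2.1, Lem.\ 2.9]{Marschall}.
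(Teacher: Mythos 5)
Your proposal is correct and takes essentially the same approach as the paper: there too, the theorem is stated as an immediate consequence of Theorem 2.1 and Lemma 2.9 in \cite{Marschall}, with no further proof given. Your parameter check that $\rho=\delta=0$, $p=2$ reduce the bounds of Theorem \ref{thm:stetigInHoelderRaum} to the range $s\in(n/2-\tau,\tau)$ and the threshold $\tau>n/2$ is a correct and useful elaboration that the paper leaves implicit.
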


\begin{thm} \label{thm:stetigInHoelderRaum00}
  Let $m \in \R$, $N> n/2$ and $\tau > 0$. Moreover let $P$ be an element of $ \op C^{\tau}_{\ast} \Snn{m-n/2}{0}{0}{N}$. Then the operator
  \begin{align*}
    P: H_2^{s+m}(\Rn) \rightarrow H_2^s(\Rn) \qquad \textrm{is continuous for all } -\tau < s < \tau.
  \end{align*}
\end{thm}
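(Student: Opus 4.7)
The plan is to derive this theorem from Theorem \ref{thm:1stetigInHoelderRaum00} by exploiting the $n/2$ drop in the symbol order, which compensates for the weakened assumption $\tau>0$ (instead of $\tau>n/2$ as in \ref{thm:1stetigInHoelderRaum00}).

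I would begin with a dyadic decomposition of the symbol in $\xi$: choose a Littlewood--Paley partition of unity $\{\psi_j\}_{j\geq 0}$ on $\Rn$ with $\psi_j$ supported in the shell $|\xi|\sim 2^j$ for $j\geq 1$ (and $|\xi|\leq 1$ for $j=0$), and write $p=\sum_j p_j$ where $p_j(x,\xi):=p(x,\xi)\psi_j(\xi)$. From $p\in C^{\tau}_{\ast}\Snn{m-n/2}{0}{0}{N}$ one reads off $\|p_j(\cdot,\xi)\|_{C^{\tau}_{\ast}}\leq C\,2^{j(m-n/2)}$ uniformly in $\xi$, and each $p_j$ has $\xi$-support in a shell of scale $2^j$.

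Next, for each $j$ I would estimate $\op(p_j)$ on $L^2$ by exploiting the shell localization: since $\op(p_j)u$ is essentially frequency-localized near $2^j$, the Bernstein inequality converts the $2^{-jn/2}$ decay in the symbol bound into a uniform operator estimate
\begin{align*}
  \|\op(p_j)u\|_{L^2} \leq C\,2^{jm}\,\|\tilde{\Delta}_j u\|_{L^2},
\end{align*}
where $\tilde{\Delta}_j$ is a suitably fattened Littlewood--Paley projector. The constant $C$ depends only on finitely many $C^{\tau}_{\ast}\Snn{m-n/2}{0}{0}{N}$-seminorms of $p$; the restriction $N>n/2$ enters through a Schur-type test combined with Bernstein, while the $C^{\tau}_{\ast}$-regularity in $x$ allows the off-diagonal interactions between dyadic blocks of $p$ and of $u$ to be absorbed, forcing the constraint $-\tau<s<\tau$.

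Finally, I would combine the dyadic pieces via the near-orthogonality of the frequency supports of $\op(p_j)u$: only pairs $(j,k)$ with $|j-k|\leq C$ interact, so Plancherel in $H^s_2$ yields
\begin{align*}
  \|\op(p)u\|_{H^s_2}^2 \leq C \sum_j 2^{2j(s+m)}\,\|\tilde{\Delta}_j u\|_{L^2}^2 \leq C\,\|u\|_{H^{s+m}_2}^2
\end{align*}
for $-\tau<s<\tau$ and $u\in\s$. The density of $\s$ in $H^{s+m}_2$ together with Lemma \ref{lemma:UnabhangigkeitVomSymbol} then delivers the unique bounded extension asserted in the statement.

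The principal technical obstacle is the uniform-in-$j$ $L^2$-estimate in the second step for a non-smooth symbol: the off-diagonal terms in the dyadic analysis are precisely what forces the restriction $|s|<\tau$, and their bookkeeping (as in Marschall's arguments) is the most delicate part. From this perspective, Theorem \ref{thm:1stetigInHoelderRaum00} corresponds to the regime $\tau>n/2$ in which these off-diagonal errors become harmless and one recovers the wider range $s\in(n/2-\tau,\tau)$.
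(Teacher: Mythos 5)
The paper itself gives no argument for this statement: Theorems \ref{thm:1stetigInHoelderRaum00} and \ref{thm:stetigInHoelderRaum00} are both obtained by citing Theorem 2.1 and Lemma 2.9 of \cite{Marschall}. You are therefore reconstructing a proof along Littlewood--Paley lines, which is broadly the right kind of machinery, and your per-block estimate is in fact correct: for $p_j(x,\xi)=p(x,\xi)\psi_j(\xi)$ one has $\|\op(p_j)u\|_{L^2}\le\sup_\xi\|p_j(\cdot,\xi)\|_{L^\infty}\,\|\hat u\|_{L^1(\{|\xi|\sim 2^j\})}$, and Cauchy--Schwarz on a shell of measure $\sim 2^{jn}$ gives $\|\op(p_j)u\|_{L^2}\lesssim 2^{jn/2}\cdot 2^{j(m-n/2)}\|\tilde\Delta_j u\|_{L^2}=2^{jm}\|\tilde\Delta_j u\|_{L^2}$, exactly trading the $n/2$ drop in the symbol order against the size of the frequency shell.

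The gap is in the summation step. You assert that ``$\op(p_j)u$ is essentially frequency-localized near $2^j$'' and that ``only pairs $(j,k)$ with $|j-k|\le C$ interact,'' and you build the final square-function estimate on this near-orthogonality. For a symbol that is merely $C^{\tau}_{\ast}$ in $x$ this is false: $p_j(\cdot,\xi)$ is not band-limited in $x$, so $\widehat{\op(p_j)u}(\eta)=\int \hat p_j(\eta-\xi,\xi)\hat u(\xi)\dq\xi$ is spread over all $\eta$, and $\Delta_k\op(p_j)\tilde\Delta_j$ is nonzero for every pair $(j,k)$. What makes the argument work is an additional dyadic decomposition of the symbol in its $x$-frequency, $p_j=\sum_l\Delta_l^x p_j$ with $\|\Delta_l^x p_j(\cdot,\xi)\|_{L^\infty}\lesssim 2^{-l\tau}\|p_j(\cdot,\xi)\|_{C^{\tau}_{\ast}}$. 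This separates a paradifferential piece ($l\ll j$, which \emph{is} output-frequency localized and to which near-orthogonality applies) from a remainder ($l\gtrsim j$), and it is the geometric decay $2^{-l\tau}$ of the remainder, weighed against the factors $2^{\pm s(j-k)}$ coming from the $H^s$-norms, that both produces and is responsible for the constraint $-\tau<s<\tau$. You gesture at this in the closing paragraph (``off-diagonal interactions between dyadic blocks of $p$ and of $u$''), but the proof you actually lay out takes near-orthogonality of $\{\op(p_j)u\}_j$ as a direct fact rather than as the conclusion of a symbol-smoothing reduction, and in that form the key step does not hold.
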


\begin{lemma}\label{lemma:AbschVonBeschrPODMenge}
  Let $s \in \R^+$ with $s \notin \N$, $m \in \R$ and $0 \leq \rho, \delta \leq 1$. Additionally let $M \in \N_0 \cup \{ \infty \}$. Moreover, $\mathscr{B} \subseteq C^s \Snn{m}{\rho}{\delta}{M}$ should be a bounded subset and $u \in \s$. For every $N \in \N_0$ with $2N \leq M$ we have
  \begin{align*}
    |a(x,D_x)u(x)| \leq C_{N,n} \<{x}^{-2N} \qquad \text{for all } x \in \Rn \text{ and } a \in \mathscr{B}.
  \end{align*}
  Note  that $C_{N,n}$ is dependent on $u \in \s$.
\end{lemma}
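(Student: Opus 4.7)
The plan is to apply the standard oscillatory-integration-by-parts trick in the $\xi$ variable in order to produce the factor $\<{x}^{-2N}$. Writing
$$ a(x,D_x)u(x) = \intr e^{ix\cdot\xi} a(x,\xi) \hat{u}(\xi) \dq\xi, $$
the starting observation is the identity
$$ \<{x}^{2N} e^{ix\cdot\xi} = (1-\Delta_{\xi})^N e^{ix\cdot\xi}, $$
which lets us rewrite, after $2N$ integrations by parts in $\xi$,
$$ \<{x}^{2N} a(x,D_x)u(x) = \intr e^{ix\cdot\xi} (1-\Delta_{\xi})^N\bigl[ a(x,\xi)\hat{u}(\xi) \bigr] \dq\xi. $$
No boundary terms appear because $\hat{u}$ and its $\xi$-derivatives are Schwartz. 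The integration by parts is legitimate because the assumption $2N \leq M$ means that every $\xi$-derivative occurring in $(1-\Delta_{\xi})^N$ is covered by the definition of $C^s \Snn{m}{\rho}{\delta}{M}$; in particular $\pa{\gamma} a(x,\xi)$ is continuous and satisfies the symbol bounds for all $|\gamma|\leq 2N$.

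Next I would expand via the Leibniz rule and estimate each resulting term. Since $\mathscr{B}$ is bounded in $C^s \Snn{m}{\rho}{\delta}{M}$, there exist constants $C_{\gamma}>0$ independent of $a\in\mathscr{B}$ with
$$ |\pa{\gamma} a(x,\xi)| \leq C_{\gamma} \<{\xi}^{m-\rho|\gamma|} \leq C_{\gamma} \<{\xi}^{m_+} \quad \text{for all }x,\xi\in\Rn,\ |\gamma|\leq 2N. $$
Combined with the Schwartz bounds $|\p_{\xi}^{\beta}\hat{u}(\xi)|\leq C_{\beta,k}\<{\xi}^{-k}$, which hold for every $k\in\N_0$, the Leibniz expansion produces finitely many terms, each bounded by $C\<{\xi}^{m_+ -k}$ uniformly in $a\in\mathscr{B}$. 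Choosing $k$ large enough that $k - m_+ > n$ makes the integral absolutely convergent, giving
$$ \left| \intr e^{ix\cdot\xi}(1-\Delta_{\xi})^N\bigl[ a(x,\xi)\hat{u}(\xi)\bigr] \dq\xi \right| \leq C_{N,n}, $$
with $C_{N,n}$ depending on $u$ (through seminorms of $\hat{u}$) and on the symbol bounds of $\mathscr{B}$, but independent of $x$ and $a$. Dividing by $\<{x}^{2N}$ yields the claim.

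There is no real obstacle here: the statement is a uniform-in-$a$ version of the classical pointwise decay estimate for pseudodifferential operators applied to Schwartz functions. The only point to verify is that the uniform boundedness of $\mathscr{B}$ in $C^s \Snn{m}{\rho}{\delta}{M}$ implies uniform bounds on $|\pa{\gamma} a(x,\xi)|$ for $|\gamma|\leq 2N$, which is immediate from the definition of the symbol class (the $C^s$-norm dominates the $L^{\infty}$-norm in $x$). Hence the constant $C_{N,n}$ produced above is indeed independent of $a\in\mathscr{B}$.
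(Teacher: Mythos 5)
Your proof is correct and follows essentially the same route as the paper: integrate by parts $2N$ times in $\xi$ via $\<{x}^{2N}e^{ix\cdot\xi}=(1-\Delta_\xi)^N e^{ix\cdot\xi}=\<{D_\xi}^{2N}e^{ix\cdot\xi}$, use the Leibniz rule together with the uniform symbol bounds for $\mathscr{B}$ and the Schwartz decay of $\hat{u}$ to obtain an integrable majorant independent of $x$ and $a$, and divide by $\<{x}^{2N}$. The only cosmetic difference is that the paper fixes $M_{m,n}>n+|m|$ once and for all while you phrase it as choosing $k$ with $k-m_+>n$; the argument is the same.
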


\begin{proof}
  Let $N \in \N_0$ with $2N \leq M$. 
  Choosing $M_{m,n} \in \N$ with $-M_{m,n} < -n -|m|$, we get for all $a \in \mathscr{B}$ and all $x \in \Rn$
  by means of $u \in \s$
  and the boundedness of $\mathscr{B} \subseteq C^s \Snn{m}{\rho}{\delta}{M}$:
  \begin{align}\label{p2}
    \left| \<{ D_{\xi} }^{2N} \left[ a(x,\xi) \hat{u}(\xi) \right] \right| 
    \leq C_{N,n} \<{\xi}^{m -M_{m,n}} 
    \in L^1(\Rn_{\xi}).
  \end{align}
  Here $C_{N,n}$ is independent of $x,\xi \in \Rn$ and $a \in \mathscr{B}$.  
    On account of 
    (\ref{p2}) and integration by parts with respect to $\xi$
  we conclude the claim:
  \begin{align*}
    \left| \<{x}^{2N} a(x,D_x)u(x) \right| 
     = \left| \int e^{ix \cdot \xi} \<{ D_{\xi} }^{2N} \left[ a(x,\xi) \hat{u}(\xi) \right]  \dq \xi \right| 
    \leq C_{N,n}
  \end{align*}
  for all $a \in \mathscr{B}$ and $x \in \Rn$.
\end{proof}

\subsection{Double Symbols}\label{Section:DoubleSymbols}

\begin{Def}
	Let $0<s \leq 1$, $m \in \N_0$ and $\tilde{m},m'\in \R$. Furthermore, let $N \in \N_0 \cup \{ \infty \}$ and $0 \leq \rho \leq 1$. Then the space of \textit{non-smooth double (pseudodifferential) symbols} $C^{m,s} S^{\tilde{m},m'}_{\rho, 0}(\RnRn \times \RnRn;N)$ 
	is the set of all functions $p:\R^n_x \times\R^n_{\xi} \times \R^n_{x'} \times\R^n_{\xi'} \rightarrow \C$ such that
	\begin{itemize}
		\item[i)] $\pa{\alpha} \p^{\beta'}_{x'} \p_{\xi'}^{\alpha'} p \in C^s(\R^n_x)$ and $\p_x^{\beta} \pa{\alpha} \p^{\beta'}_{x'} \p_{\xi'}^{\alpha'} p \in C^{0}(\R^n_x \times \R^n_{\xi} \times \R^n_{x'} \times\R^n_{\xi'})$,
		\item[ii)] $\| \pa{\alpha}  \p^{\beta'}_{x'} \p_{\xi'}^{\alpha'} p(.,\xi, x', \xi')  \|_{C^s(\R^n)} \leq C_{\alpha, \beta', \alpha'} \<{\xi}^{\tilde{m}-\rho|\alpha|} \<{\xi'}^{m'-\rho|\alpha'|}$
	\end{itemize}
  for all $\xi, x', \xi' \in \Rn$ and arbitrary $\beta, \alpha, \beta', \alpha' \in \N_0^n$ with $|\beta| \leq m$ and $|\alpha| \leq N$. 
  Here the constant $C_{\alpha, \beta', \alpha'}$ is independent of $\xi, x', \xi' \in \Rn$. 
  In the case $N= \infty$ we write $C^{m,s} S^{\tilde{m},m'}_{\rho, 0}(\RnRn \times \RnRn)$
  instead of $C^{m,s} S^{\tilde{m},m'}_{\rho, 0}(\RnRn \times \RnRn;\infty)$. Furthermore, we define the set of semi-norms $\{|.|^{\tilde{m},m'}_k : k \in \N_0 \}$ by
  \begin{align*}
    |p|^{\tilde{m},m'}_k \hspace{-1.2mm}:= \hspace{-0.2mm} \max_{\substack{ |\alpha| + |\beta'| + |\alpha'| \leq k \\ |\alpha| \leq N} } \sup_{\xi, x', \xi' \in \Rn} \hspace{-2mm} \| \pa{\alpha}  \p^{\beta'}_{x'} \p_{\xi'}^{\alpha'} p(.,\xi, x', \xi')  \|_{C^{m,s}(\R^n)} \<{\xi}^{-(\tilde{m}-\rho|\alpha|)} \<{\xi'}^{-(m'-\rho|\alpha'|)}.
  \end{align*}
\end{Def}

Due to the previous definition, $p \in C^{m,s}\Sallg{\tilde{m}}{\rho}{\delta}{n}{N}$ is often called a \textit{non-smooth single symbol}. \\

The associated operator of a non-smooth double symbol is defined in the following way:

\begin{Def}
  Let $0<s \leq 1$, $m \in \N_0$, $0 \leq \rho \leq 1$ and $\tilde{m},m'\in \R$. Additionally let $N \in \N_0 \cup \{ \infty \}$. Assuming $ p \in C^{m,s} S^{\tilde{m},m'}_{\rho, 0}(\RnRn \times \RnRn; N) $,  we define the pseudodifferential operator $P = p(x,D_x, x', D_{x'})$ 
  such that for all $u \in \s$ and $x \in \Rn$
  \begin{align*}
    P u(x) := \osiint e^{-i(y \cdot \xi + y' \xi')} p(x,\xi,x+y,\xi') u(x+y+y')dy dy' \dq \xi \dq \xi' .
  \end{align*}
  Note, that we can verify the existence of the previous oscillatory integral by using the properties of such integrals. For more details, see \cite[Lemma 4.64]{Diss}.
\end{Def}

The set of all non-smooth pseudodifferential operators whose double symbols are in the symbol-class $ C^{m,s} S^{\tilde{m},m'}_{\rho, 0}(\RnRn \times \RnRn; N) $ is denoted by 
  $$\op C^{m,s} S^{\tilde{m},m'}_{\rho, 0}(\RnRn \times \RnRn; N). $$

For later purposes we will need a special subset of the non-smooth double symbols $C^{m,s} S^{\tilde{m},0}_{\rho, 0}(\RnRn \times \RnRn;N)$: For $0<s \leq 1$, $m \in \N_0$, $N \in \N_0 \cup \{ \infty \}$ and $\tilde{m} \in \R$ we denote the space $C^{m,s} S^{\tilde{m}}_{\rho, 0}(\RnRn \times \Rn;N)$ 
as the set of all non-smooth symbols $p \in C^{m,s} S^{\tilde{m},0}_{\rho, 0}(\RnRn \times \RnRn;N)$ which are independent of $\xi'$.
Then we define the pseudodifferential operator $p(x,D_x,x')$
by $$p(x,D_x,x') := p(x,D_x, x', D_{x'}).$$
 The set of all non-smooth pseudodifferential operators whose double symbols are in $C^{m,s} S^{\tilde{m}}_{\rho, 0}(\RnRn \times \Rn;N)$ is denoted by $\op C^{m,s} S^{\tilde{m}}_{\rho, 0}(\RnRn \times \Rn;N)$.\\

Pseudodifferential operators of the symbol-class $C^{m,s} S^{\tilde{m}}_{\rho, 0}(\RnRn \times \Rn;N)$ applied on a Schwartz function can be presented in the following way:

\begin{lemma}\label{lemma:PseudosMitDoppelsymbol}
  Let $0<s<1$, $\tilde{m} \in \N_0$, $0 \leq \rho \leq 1$, $m\in \R$ and $N \in \N_0 \cup \{ \infty \}$. Considering $a \in C^{\tilde{m},s} S^{m}_{\rho, 0}(\RnRn \times \Rn;N)$, we obtain for all $u \in \s$: 
  \begin{align*}
    a(x,D_x,x') u(x) = \osint e^{i(x-y)\cdot \xi} a(x,\xi, y) u(y) dy \dq \xi  \qquad \text{for all } x\in \Rn.
  \end{align*}
\end{lemma}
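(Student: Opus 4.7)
The plan is to start from the double-symbol defining formula, exploit that $a$ does not depend on $\xi'$ to evaluate the inner oscillatory integral in $(y',\xi')$ by Fourier inversion, and then linearly substitute in the remaining $(y,\xi)$-integral. Writing out the definition and using that $a$ is $\xi'$-independent yields
$$a(x,D_x,x') u(x) = \osiint e^{-i(y\cdot\xi + y'\cdot\xi')} a(x,\xi,x+y) u(x+y+y') \, dy\, dy'\, \dq\xi\, \dq\xi'.$$

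The amplitude $A(y,y',\xi,\xi') := a(x,\xi,x+y) u(x+y+y')$ is bounded, uniformly in $(y,y')$, by $C_{\alpha,\beta,\beta'} \<{\xi}^{m-\rho|\alpha|}$ for all $|\alpha|\leq N$ and all $\beta,\beta'$ (using $u\in\s$, the symbol estimates of $a$, and the fact that $a$ is $C^\infty$ in its $x'$-slot), so $A\in\mathscr{A}^{m,N}_{0}(\R^{2n}\times\R^{2n})$. Provided $N$ is large enough---which is already required for the double-symbol operator to be well-defined, cf.\,\cite[Lemma 4.64]{Diss}---Theorem \ref{thm:FubiniForOsziInt} reduces the double oscillatory integral to the iterated one, and after pulling the $(y',\xi')$-independent factor $a(x,\xi,x+y)$ out of the inner bracket,
$$a(x,D_x,x') u(x) = \osint e^{-iy\cdot\xi} a(x,\xi,x+y) \left[\osint e^{-iy'\cdot\xi'} u(x+y+y')\, dy'\, \dq\xi'\right] dy\, \dq\xi.$$
The bracket equals $u(x+y)$ by the standard identity $\osint e^{-iz\cdot\eta} \phi(z)\,dz\,\dq\eta = \phi(0)$ for $\phi\in\s$, applied to $\phi(y') := u(x+y+y')$; this is a direct consequence of Fourier inversion and Theorem \ref{thm:ExistenceOfOscillatoryIntegral}.

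Substituting this in and applying Theorem \ref{thm:VariableTransformationOfOsiInt} with shift $(y_0,\eta_0)=(-x,0)$ to the amplitude $\tilde a(y,\xi):=a(x,\xi,x+y) u(x+y)$, which lies in $\mathscr{A}^{m,N}_{\tau}(\RnRn)$ for any $\tau<0$ thanks to the Schwartz decay of $u$, produces
$$\osint e^{-iy\cdot\xi} a(x,\xi,x+y) u(x+y)\, dy\, \dq\xi = \osint e^{i(x-z)\cdot\xi} a(x,\xi,z) u(z)\, dz\, \dq\xi,$$
which is the claimed formula. The main obstacle is the Fubini step: one has to verify that the joint amplitude $A$ and the post-reduction amplitude $\tilde a$ fall into amplitude classes whose parameters satisfy the integer constraints in Theorem \ref{thm:FubiniForOsziInt} (in particular the existence of the auxiliary index $\tilde l$). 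Once this verification is carried out, the Fourier-inversion evaluation of the inner integral and the linear change of variables are routine applications of the oscillatory-integral machinery developed in Section \ref{section:ExtensionSpaceOfAmplitudes}.
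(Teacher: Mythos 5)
Your outline follows the same skeleton as the paper's proof (write the defining double oscillatory integral, apply Fubini, evaluate the inner $(y',\xi')$-integral by Fourier inversion, finish with the linear change of variables), but the Fubini step as you set it up does not go through, and this is exactly where the paper inserts an extra idea that your proposal is missing.

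You apply Theorem \ref{thm:FubiniForOsziInt} directly to the amplitude
$A(y,y',\xi,\xi') := a(x,\xi,x+y)\,u(x+y+y')$,
which lies in $\mathscr{A}^{m_+,N}_{0}(\R^{2n}\times\R^{2n})$ and in no better class: the Schwartz decay of $u$ is in the combined variable $y+y'$, while $a$ is only bounded in $x'$, so you get no decay in $(y,y')$ and the spatial exponent is $\tau = 0$. With $\tau=0$ and $k=n$, the quantities in Theorem \ref{thm:FubiniForOsziInt} become $\tilde\tau=\hat\tau=0$ and $M=N-n-1$, and the hypothesis for (\ref{claim1}) is that some $\tilde l\in\N_0$ satisfies $M\geq\tilde l>n$, i.e.\ $N\geq 2n+2$. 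This is strictly stronger than anything the lemma assumes, and it is also strictly stronger than what the lemma is actually used with later (in Theorem \ref{thm:classA00} the symbol lies in $C^{\tau}S^{0}_{0,0}(\RnRn\times\Rn;M)$ with $M$ as small as $n+2$, which is $< 2n+2$ once $n\geq 1$). So the caveat you hide behind ``$N$ large enough, as already required for well-definedness'' does not hold: well-definedness needs roughly $N>n$, your Fubini step needs $N>2n+1$, and the gap between these is genuine.

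The paper fixes precisely this point before invoking Fubini. Since the integrand is $\xi'$-independent, one may insert $\<{y'}^{-2l}\<{D_{\xi'}}^{2l}$ (with $2l>n$) under the oscillatory integral sign -- the $\<{D_{\xi'}}^{2l}$ acts trivially, and this is justified by integration by parts in the regularized integrals as in Theorem \ref{thm:OscillatoryIntegralGleichung}. This produces an explicit factor $\<{y'}^{-2l}$ that, combined with the Schwartz decay of $u$ in $z+y'$, makes the new amplitude decay in the joint variable $(z,y')$, i.e.\ places it in $\mathscr{A}^{m_+,N}_{\tau'}$ for some $\tau'<-n$. For such negative $\tau'$ the auxiliary index in Theorem \ref{thm:FubiniForOsziInt} can be taken as $\tilde l=0$, so the Fubini step works for the small values of $N$ the lemma is actually applied with. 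The remaining steps of your outline (linearity to pull out $a(x,\xi,x+y)$, the identity $\osint e^{-iz\cdot\eta}\phi(z)\,dz\,\dq\eta=\phi(0)$, and Theorem \ref{thm:VariableTransformationOfOsiInt}) are fine; the missing piece is this decay-improving insertion before Fubini.
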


\begin{proof}
  Let $u \in \s$ and $x \in \Rn$ be arbitrary. Then
  \begin{align*}
    \<{y'}^{-2l} \<{D_{\xi'}}^{2l} a(x,\xi, z) u(z+y') \in \mathscr{A}_{-2n-2}^{m_+, N}(\R^{2n}_{(z,y')} \times \R^{2n}_{(\xi,\xi')}).
  \end{align*} 
  With Theorem \ref{thm:ExistenceOfOscillatoryIntegral}, Corollary \ref{kor:VertauschbarkeitVonOsziIntUndLimes},
  Theorem \ref{thm:VariableTransformationOfOsiInt} and Theorem \ref{thm:FubiniForOsziInt} at hand, we get
  \begin{align*}
    &a(x,D_x,x')u(x) \\
    &\quad = \osiint e^{-i(y \cdot \xi + y' \cdot \xi')} \<{y'}^{-2l} \<{D_{\xi'}}^{2l} [ a(x,\xi,x+y) u(x+y+y')]dy dy' \dq \xi \dq \xi' \\
    &\quad = \osiint e^{-i(z-x) \cdot \xi } e^{ -iy' \cdot \xi'} a(x,\xi,z) \<{y'}^{-2l} \<{D_{\xi'}}^{2l} u(z+y')dz dy' \dq \xi \dq \xi'\\
    &\quad = \osint e^{i(x-z) \cdot \xi } a(x,\xi,z) \left[ \osint e^{ -iy' \cdot \xi'} \<{y'}^{-2l} \<{D_{\xi'}}^{2l} u(z+y')dy' \dq \xi' \right]  \hspace{-0.1cm} dz \dq \xi
  \end{align*} 
  By means of
  \begin{align*}
    \<{y'}^{-2l} \<{D_{\xi'}}^{2l} u(z+y') \in \sindo{y'} \subseteq \mathscr{A}^0_{-k}(\RnRnx{y'}{\xi'})
  \end{align*}
  we are able to apply Theorem \ref{thm:OscillatoryIntegralGleichung} and Theorem \ref{thm:VariableTransformationOfOsiInt} and get
  \begin{align*}
    \osint e^{ -iy' \cdot \xi'} \<{y'}^{-2l} \<{D_{\xi'}}^{2l} u(z+y')dy' \dq \xi' 
    = \osint e^{ -i(\tilde{z}-z) \cdot \xi'}  u(\tilde{z}) d\tilde{z} \dq \xi'
    = u(z).
  \end{align*}
  For the proof of the last equality we refer to \cite[Example 3.11]{PDO}.
  Combining all these results we conclude the proof. 
\end{proof}

\begin{bem}\label{bem:AbleitungVonPDOWiederPDO}
  Let $0<s<1$, $m \in \R$, $\tilde{m} \in \N_0$ and $N \in \N_0 \cup \{ \infty \}$. The boundedness of the subset $\mathscr{B} \subseteq C^{\tilde{m},s} S^{m}_{\rho, 0}(\RnRn \times \Rn; N)$, $0\leq \rho \leq 1$, implies the boundedness of 
  $$\left\{ \p^{\delta}_x \pa{\gamma} a : a \in \mathscr{B} \right\} \subseteq C^{\tilde{m}- |\delta|,s} S^{m-\rho|\gamma|}_{\rho, 0}(\RnRn \times \Rn; N-|\gamma|)$$ 
  for each $\gamma, \delta \in \Non$ with $|\delta| \leq \tilde{m}$ and $|\gamma| \leq N$.
\end{bem}

  \begin{proof}
    The claim is a direct consequence of the definition of the double symbols.
  \end{proof}

\section{Characterization of Non-Smooth Pseudodifferential Operators} \label{Section:Charakterisierung}

Throughout the whole section $( \varphi_j)_{ j \in \N_0}$ is an arbitrary but fixed dyadic partition of unity on $\R^n$, that is a partition of unity with 
\begin{align*}
  \supp \varphi_0 \subseteq \overline{B_2(0)} \qquad \text{and} \qquad \supp \varphi_j \subseteq \{ \xi \in \Rn: 2^{j-1} \leq |\xi| \leq 2^{j+1}\} 
\end{align*}
for all $ j \in \N$. Moreover we define for every $m \in \R$ the order reducing pseudodifferential operator $\Lambda^m:=\lambda^m(D_x)$, where $\lambda^m(\xi):=\<{\xi}^m \in S^{m}_{1,0}(\RnRn)$.

\subsection{Pointwise Convergence in $C^{m,s} S^0_{0,0}$}\label{section:pointwiseConvergence}

For a bounded sequence $(p_{\e})_{\e >0} \subseteq C^{m,s}S^0_{0,0}(\RnRn;M)$, we show the existence of a subsequence of $(p_{\e})_{\e >0}$ which converges pointwise in $C^{m,s}S^0_{0,0}(\RnRn;M-1)$. 
To reach this goal we need the next lemma:

\begin{lemma}\label{lemma:convergence}
   Let $m \in \N_0$, $0<s\leq 1$ 
   and $(p_{\e} )_{\e>0} \subseteq C^{m,s}(\Rn)$ be a bounded sequence. Then there is a subsequence $(p_{\e_k} )_{k \in \N} \subseteq (p_{\e} )_{\e>0}$ with $\e_k \rightarrow 0$ for $k \rightarrow \infty$ and a $p \in C^{m,s}(\Rn)$ such that for all $\beta \in \N_0^n$ with $|\beta| \leq m$
  \begin{align*}
    \p^{\beta}_x  p_{\e_k} \xrightarrow[]{k \rightarrow \infty} \p^{\beta}_x p
  \end{align*}
  converges uniformly on each compact set $K \subseteq \Rn$.  
\end{lemma}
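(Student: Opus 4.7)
The statement is essentially an Arzelà–Ascoli result for $C^{m,s}$-bounded families on $\R^n$, so my plan is to reduce to the classical Arzelà–Ascoli theorem and to combine it with a diagonal argument.

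First, I would observe that the boundedness of $(p_\e)_{\e>0}$ in $C^{m,s}(\Rn)$ means that for every $\beta \in \Non$ with $|\beta|\leq m$, the family $\{ \p^{\beta}_x p_{\e} : \e>0 \}$ is uniformly bounded in $L^{\infty}(\Rn)$ and uniformly Hölder continuous with exponent $s$ (and constant independent of $\e$). The Hölder bound immediately yields equicontinuity on every compact set $K \subseteq \Rn$.

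Next, I would exhaust $\Rn$ by the compact sets $K_j := \overline{B_j(0)}$, $j \in \N$. Fix a sequence $\e_n \to 0$. On $K_1$, the Arzelà–Ascoli theorem, applied simultaneously to all $\p^\beta_x p_{\e_n}$ with $|\beta|\le m$ (finitely many indices), yields a subsequence along which all $\p^\beta_x p_{\e_n}$ converge uniformly on $K_1$. Passing to further subsequences on $K_2,K_3,\ldots$ and taking the diagonal subsequence $(p_{\e_k})_{k\in\N}$, I obtain a single subsequence along which $\p^\beta_x p_{\e_k}$ converges uniformly on every $K_j$, hence on every compact set, for every $|\beta|\le m$. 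Denote the limit of $p_{\e_k}$ by $p$ and the limit of $\p^\beta_x p_{\e_k}$ by $q_\beta$.

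To identify $q_\beta$ with $\p^\beta_x p$, I would use the standard fact that if a sequence of $C^1$-functions converges uniformly on compacts together with its first partial derivatives, then the limit is $C^1$ and the derivative is the limit of the derivatives. Iterating coordinate-by-coordinate through $|\beta|\le m$, I conclude $p \in C^{m}(\Rn)$ and $\p^\beta_x p = q_\beta$ for all $|\beta|\le m$. Finally, the $C^{m,s}$-membership of the limit follows from lower semicontinuity of the Hölder seminorm under pointwise convergence: for $x\neq y$,
\begin{align*}
\frac{|\p^\beta_x p(x)-\p^\beta_x p(y)|}{|x-y|^s}
= \lim_{k\to\infty} \frac{|\p^\beta_x p_{\e_k}(x)-\p^\beta_x p_{\e_k}(y)|}{|x-y|^s}
\leq \sup_{k} \|\p^\beta_x p_{\e_k}\|_{C^{0,s}(\Rn)},
\end{align*}
which is bounded uniformly in $x,y$ by assumption, and analogously for the $L^\infty$-norm.

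There is really no hard step here; the only point requiring a little care is the diagonal argument together with the justification that the limits of the derivatives are themselves the derivatives of the limit, but this is routine once uniform convergence on compacta of all $\p^\beta_x p_{\e_k}$ with $|\beta|\le m$ has been established.
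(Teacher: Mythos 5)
Your proposal is correct and follows essentially the same route as the paper: the paper invokes the compactness of the embedding $C^{m,s}(\overline{B_j(0)}) \hookrightarrow C^{m}(\overline{B_j(0)})$ (which is precisely the Arzelà--Ascoli argument you spell out for each derivative) together with the same diagonal extraction over nested balls, and then obtains $p \in C^{m,s}(\Rn)$ from boundedness plus pointwise convergence exactly as you do. The only cosmetic difference is that the paper works with the $C^m$-norm convergence on each ball directly, so the identification of limits of derivatives with derivatives of the limit is absorbed into the compact embedding rather than argued separately.
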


\begin{proof}
  It is sufficient to prove the claim for each $\overline{B_j(0)}$, $j \in \N$.
  Due to the boundedness of $( p_{\e}|_{\overline{B_j(0)} })_{\e > 0} \subseteq C^{m,s}(\overline{B_j(0)})$ and the compactness of the embedding 
  $C^{m,s}(\overline{B_j(0)}) \subseteq C^{m}(\overline{B_j(0)})$ we get by a diagonal sequence argument the existence of a subsequence $(p_{\e_k} )_{k \in \N} \subseteq (p_{\e} )_{\e>0}$ with $\e_k \rightarrow 0$ for $k \rightarrow \infty$ and of unique functions $p_{B_j(0)} \in C^m(\overline{B_j(0)})$ such that
  \begin{align*}
    p_{\e_k} \xrightarrow[]{k \rightarrow \infty} p_{B_j(0)} \qquad \text{in } C^m(\overline{B_j(0)}) \text{ for all } j \in \N.
  \end{align*}
  We define $p:\Rn \rightarrow \C$ via
    $p(x) := p_{B_j(0)}(x)$ for all $x \in \overline{B_j(0)}$ and each $j \in \N$.
  This implies the uniform convergence of
  \begin{align*}
    \p^{\beta}_x  p_{\e_k} \xrightarrow[]{k \rightarrow \infty} \p^{\beta}_x p \qquad \text{on } \overline{B_j(0)}
  \end{align*}
  for all $j \in \N$ and $\beta \in \Non$ with $|\beta| \leq m$.  
  The definition of $p$ provides $p \in C^m(\overline{B_j(0)})$. The boundedness of $(p_{\e} )_{\e>0} \subseteq C^{m,s}(\Rn)$ and the pointwise convergence of $\p_x^{\alpha} p_{\e} \rightarrow \p_x^{\alpha}p$ if $\e \rightarrow 0$ for all $\alpha \in \N_0^n$ yields $p \in C^{m,s}(\Rn)$.
\end{proof}

The previous result enables us to show the next claim:

\begin{lemma}\label{lemma:convergence2}
   Let $m \in \N_0$ and $0<s\leq 1$. 
   Then there is a subsequence $(p_{\e_k} )_{k \in \N} \subseteq (p_{\e} )_{\e>0}$ with $\e_k \rightarrow 0$ for $k \rightarrow \infty$ and a $p \in C^{0,s}(\RnRn)$ such that for all $\beta \in \N_0^n$ with $|\beta| \leq m$ we have
  \begin{enumerate}
    \item[i)] $\p_x^{\beta} p \in C^{0,s}(\RnRnx{x}{\xi})$, 
    \item[ii)] $\p^{\beta}_x  p_{\e_k} \xrightarrow[]{k \rightarrow \infty} \p^{\beta}_x p$ converges uniformly on each compact set $K \subseteq \RnRn$. 
  \end{enumerate}
\end{lemma}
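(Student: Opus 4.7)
I would carry out essentially the same Arzelà--Ascoli plus diagonal argument as in Lemma \ref{lemma:convergence}, but now in the joint variable $(x,\xi) \in \RnRn$ instead of only in $x$. The key input is Lemma \ref{lemma:boundOfSymbolsOnCs}: under the (implicit) standing hypothesis that $(p_{\e})_{\e>0} \subseteq C^{m,s}S^0_{0,0}(\RnRn;M)$ is a bounded sequence (with $M\geq 1$, so that the lemma applies with $\gamma = 0$), it follows that for every $\beta \in \Non$ with $|\beta|\leq m$ the family $\{\p_x^{\beta} p_{\e} : \e>0\} \subseteq C^{0,s}(\RnRn)$ is bounded. In particular, each $\p_x^{\beta} p_{\e}$ is uniformly $s$-Hölder on all of $\RnRn$, hence the restrictions to any compact set are uniformly bounded and equicontinuous.

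First I would exhaust $\RnRn$ by the closed balls $\overline{B_j(0)} \subseteq \RnRn$, $j \in \N$. Fixing $\beta$ with $|\beta|\leq m$, the restrictions $\p_x^{\beta}p_{\e}|_{\overline{B_j(0)}}$ form a bounded, equicontinuous family in $C^0(\overline{B_j(0)})$, so by Arzelà--Ascoli there is a uniformly convergent subsequence. Running a diagonal extraction simultaneously over all $j \in \N$ and over the finitely many multi-indices $\beta$ with $|\beta|\leq m$ yields one subsequence $(p_{\e_k})_{k\in\N}$, with $\e_k\to 0$, and continuous functions $p^{(\beta)} \in C^0(\RnRn)$ such that
\begin{align*}
  \p_x^{\beta} p_{\e_k} \longrightarrow p^{(\beta)} \quad \text{uniformly on every } \overline{B_j(0)}, \qquad |\beta|\leq m.
\end{align*}

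Next I would set $p := p^{(0)}$ and verify that $p^{(\beta)} = \p_x^{\beta} p$ for every $|\beta|\leq m$. This is the standard fact that pointwise convergence of a sequence together with uniform convergence of its partial derivatives on a compact set identifies the limit of the derivatives with the derivative of the limit; iterating in the components of $\beta$ extends it to all multi-indices of order $\leq m$. The joint $(x,\xi)$-Hölder regularity is then inherited by passing to the pointwise limit in the Hölder difference quotient and using the uniform $C^{0,s}(\RnRn)$-bound on $\{\p_x^{\beta} p_{\e_k}\}$; this gives $\p_x^{\beta} p \in C^{0,s}(\RnRnx{x}{\xi})$ for every $|\beta|\leq m$, which is (i). The case $\beta = 0$ yields $p \in C^{0,s}(\RnRn)$, and (ii) is exactly the uniform convergence on compacts obtained above.

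The only step requiring some care is the diagonalisation: one needs the \emph{same} subsequence to work for every ball $\overline{B_j(0)}$ and for every multi-index $\beta$ with $|\beta|\leq m$ simultaneously. This is handled by listing the countable index set $\{(j,\beta) : j \in \N,\, |\beta|\leq m\}$, extracting nested subsequences successively, and taking the diagonal. No step is particularly deep; the content of the lemma is really to record that the $\xi$-variable can be incorporated into the compact-convergence extraction of Lemma \ref{lemma:convergence}.
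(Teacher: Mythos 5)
Your proposal is correct and follows essentially the same plan as the paper's proof: Arzel\`a--Ascoli plus a diagonal extraction in the joint variable $(x,\xi)$, followed by identification of the limits of the $x$-derivatives with the $x$-derivatives of the limit, and by passing to the limit in the H\"older quotient to get the $C^{0,s}(\RnRn)$ regularity. The only cosmetic difference is that the paper carries out the identification step by fixing $\xi$, observing that $(p_{\e_k}(\cdot,\xi))_k$ is Cauchy in $C^m(\overline{B_j(0)})$ and using completeness plus uniqueness of limits, whereas you appeal directly to the standard calculus fact about uniform convergence of derivatives; the two arguments are equivalent.
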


\begin{proof}
  It is sufficient to show the claim for all sets $\overline{B_j(0) \times B_i(0)  }$, $i,j \in \N$. Since the subset $( \p_x^{\beta } p_{\e} )_{\e>0} $ is bounded in $ C^{0,s}(\RnRnx{x}{\xi})$, we iteratively conclude from Lemma \ref{lemma:convergence} the existence of a subsequence $(p_{\e_k} )_{k \in \N}$ of $(p_{\e} )_{e>0}$ and of functions $q_{ \beta } \in C^{0,s}(\RnRnx{x}{\xi})$ such that
  \begin{align}\label{eq3}
    \p^{ \beta }_x  p_{\e_k} \xrightarrow[]{k \rightarrow \infty} q_{ \beta } \qquad \text{uniformly in } \overline{  B_j(0) \times B_i(0) } 
  \end{align}
  for all $i,j \in \N$  and $\beta \in \Non$ with $|\beta| \leq m$. Choosing an arbitrary but fixed $\xi \in \Rn$, (\ref{eq3}) implies the uniformly convergence of
  \begin{align}\label{eq4}
    \p^{ \beta }_x  p_{\e_k}(., \xi) \xrightarrow[]{k \rightarrow \infty} q_{ \beta }(.,\xi)
  \end{align}
   in $\overline{B_j(0)}$ for all $\beta \in \Non$ with $|\beta| \leq m$ and all $j \in \N$. Hence $( p_{\e_k}(., \xi) )_{k \in \N}$ is a Cauchy sequence in $C^m(\overline{B_j(0)})$. Due to the completeness of $C^m (\overline{B_j(0)})$ we have the convergence of $( p_{\e_k}(., \xi) )_{k \in \N}$ to $\tilde{p}$ in $C^m (\overline{B_j(0)})$. Consequently we obtain for all $\beta \in \Non$ with $|\beta| \leq m$ and each $j \in \N$:
  \begin{align}
    \p^{ \beta }_x  p_{\e_k}(., \xi) \xrightarrow[]{k \rightarrow \infty} \p^{ \beta }_x \tilde{p}
  \end{align}
  in $C^0(\overline{B_j(0)})$. Because of the uniqueness of the strong limit we get together with (\ref{eq4}) that $\p^{ \beta }_x \tilde{p} = q_{\beta}(.,\xi)$ for each $\beta \in \Non$ with $|\beta| \leq m$. Thus with $p(x,\xi):= q_0(x,\xi)$ for all $x,\xi \in \Rn$ the claim holds.
\end{proof}

Finally we are able to show the main theorem of this subsection: 

\begin{thm}\label{kor:konvergenz}\label{thm:pointwiseConvergence}
  Let $m \in \N_0$, $M \in \N \cup \{ \infty \}$ and $0<s\leq 1$. 
  Furthermore, let $(p_{\e} )_{\e>0} \subseteq C^{m,s}S^0_{0,0}(\RnRn ;M)$ be a bounded sequence. 
  Then there is a subsequence $(p_{\e_l} )_{l \in \N} \subseteq (p_{\e} )_{\e>0}$ with $\e_l \rightarrow 0$ for $l \rightarrow \infty$ and a function $p: \Rn_x \times  \Rn_{\xi} \rightarrow \C$ such that for all $\alpha, \beta \in \N_0^n$ with $|\beta| \leq m$ and $|\alpha| \leq M-1$ we get
  \begin{itemize}
    \item[i)] $\p_x^{\beta}\pa{\alpha} p$ exists and $\p_x^{\beta} \pa{\alpha} p \in C^{0,s}(\RnRn)$,
    \item[ii)] $\p^{\beta}_x \pa{\alpha} p_{\e_l} \xrightarrow[]{l \rightarrow \infty} \p^{\beta}_x \pa{\alpha} p$ is uniformly convergent on each compact set $K \subseteq \RnRn$.
  \end{itemize}
  In particular $p \in C^{m,s}S^0_{0,0}( \RnRn; M-1)$.
\end{thm}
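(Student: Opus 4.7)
The plan is to combine Lemma \ref{lemma:boundOfSymbolsOnCs} with Lemma \ref{lemma:convergence2} via a diagonal extraction over all multi-indices $\alpha$ with $|\alpha| \leq M-1$. The key observation is that, although the $C^{m,s}$-regularity of each $p_\e$ is a priori only with respect to the $x$-variable, Lemma \ref{lemma:boundOfSymbolsOnCs} upgrades this, for each fixed $\alpha$ with $|\alpha|\leq M-1$, to a joint $C^{0,s}(\RnRn)$-bound on the family $\{\pa{\alpha} p_\e : \e>0\}$ (and similarly for $\p_x^\beta \pa{\alpha} p_\e$ with $|\beta|\leq m$). This is precisely the input format required by Lemma \ref{lemma:convergence2}.

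First, I would enumerate the at-most-countable set of multi-indices $\alpha_1, \alpha_2, \ldots$ satisfying $|\alpha_j|\leq M-1$. Applying Lemma \ref{lemma:convergence2} to $(\pa{\alpha_1} p_\e)_\e$, I obtain a subsequence along which $\p_x^\beta \pa{\alpha_1} p_\e$ converges uniformly on each compact set $K\subseteq \RnRn$ (for all $|\beta|\leq m$) to some $q_{\alpha_1,\beta}\in C^{0,s}(\RnRn)$. Iterating this procedure on the extracted subsequence for $\alpha_2, \alpha_3, \ldots$ and taking the Cantor diagonal, I obtain a single subsequence $(p_{\e_l})_l$ such that for every $\alpha$ with $|\alpha|\leq M-1$ and every $\beta$ with $|\beta|\leq m$,
\begin{align*}
  \p_x^\beta \pa{\alpha} p_{\e_l} \xrightarrow[]{l\to\infty} q_{\alpha,\beta} \qquad \text{uniformly on each compact } K\subseteq \RnRn,
\end{align*}
with $q_{\alpha,\beta}\in C^{0,s}(\RnRn)$.

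Setting $p:=q_{0,0}$, the next step is to identify $q_{\alpha,\beta}$ with $\p_x^\beta \pa{\alpha} p$. Since $p_{\e_l}\to p$ uniformly on compacta together with the uniform convergence on compacta of all partial derivatives $\p_x^\beta \pa{\alpha} p_{\e_l}$ up to the relevant orders, a standard result on interchanging limits and differentiation gives $\p_x^\beta \pa{\alpha} p = q_{\alpha,\beta}$ for all $|\alpha|\leq M-1$, $|\beta|\leq m$. In particular $\pa{\alpha} p \in C^{0,s}(\RnRn)$, which establishes (i) and (ii).

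Finally, to check $p\in C^{m,s}S^0_{0,0}(\RnRn;M-1)$, I would pass to the pointwise limit in the uniform bound $\|\pa{\alpha} p_{\e_l}(\cdot,\xi)\|_{C^{m,s}(\Rn)} \leq C_\alpha$, which survives by the lower semicontinuity of the $C^{m,s}$-seminorm under pointwise convergence. The main delicacies are organising the diagonal extraction cleanly in the case $M=\infty$ and verifying the equicontinuity in $\xi$ hidden in the application of Lemma \ref{lemma:boundOfSymbolsOnCs}: it is precisely the existence of one further $\xi$-derivative (bounded when $|\alpha|+1\leq M$) that produces the Hölder regularity in the $\xi$-direction needed as input for Lemma \ref{lemma:convergence2}; this is the structural reason for the loss from $M$ to $M-1$.
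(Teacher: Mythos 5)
Your proposal follows essentially the same route as the paper's proof: apply Lemma \ref{lemma:boundOfSymbolsOnCs} to obtain the joint $C^{0,s}(\RnRn)$-bounds, then apply Lemma \ref{lemma:convergence2} to the families $(\pa{\alpha}p_\e)_\e$ over all $|\alpha|\le M-1$ with a diagonal (the paper says ``inductively'') extraction, identify the limits $q_\alpha$ with $\pa{\alpha}q_0$, and pass to the limit in the $C^{m,s}S^0_{0,0}$-bounds. The one point where you deviate is the identification step: you invoke the classical theorem on interchanging uniform limits with differentiation, whereas the paper instead freezes $x$, notes that $p_{\e_l}(x,\cdot)$ is bounded in $C^{k,s}(\Rn_\xi)$, applies Lemma \ref{lemma:convergence} in the $\xi$-variable to produce a $\tilde p\in C^{M-1}(\Rn)$ with $\pa{\gamma}p_{\e_{l_r}}(x,\cdot)\to\pa{\gamma}\tilde p$, and concludes by uniqueness of limits that $q_\gamma(x,\cdot)=\pa{\gamma}\tilde p=\pa{\gamma}q_0(x,\cdot)$. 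Both routes are correct and at the same level of sophistication; yours is a touch more concise but relies on a standard theorem from outside the paper, while the paper keeps the argument self-contained using the already-established Lemma \ref{lemma:convergence}. Your remark on the structural reason for the drop from $M$ to $M-1$ (one extra $\xi$-derivative feeding Lemma \ref{lemma:boundOfSymbolsOnCs}) is accurate and matches the paper's mechanism.
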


\begin{proof}
  It is sufficient to prove the claim for $\overline{B_j(0) \times B_{j}(0) }$, $j \in \N$. Applying Lemma \ref{lemma:boundOfSymbolsOnCs} we get for all $\beta, \gamma \in \Non$ with $|\beta| \leq m$ and $|\gamma| \leq M-1$ the boundedness of the sequence $( \p_x^{\beta} \pa{\gamma} p_{\e} )_{\e>0} \subseteq C^{0,s}(\RnRn)$. Thus by Lemma \ref{lemma:convergence2} we inductively obtain the existence of a subsequence $(p_{\e_l} )_{l \in \N} \subseteq (p_{\e} )_{\e>0}$ and functions $q_{\alpha} \in C^{0,s}(\RnRn)$ with the following properties: For all $j,j \in \N$ and $\alpha,\beta \in \N_0^n$ with $|\beta| \leq m$ and $|\alpha| \leq M-1$ we have $\p_x^{\beta} q_{\alpha} \in C^{0,s}(\RnRnx{x}{\xi})$ and 
  \begin{align}\label{27}
    \p^{\beta}_x \pa{\alpha }  p_{\e_l} \xrightarrow[]{l \rightarrow \infty} \p^{\beta}_x q_{\alpha }
  \end{align}
  converges uniformly on $\overline{B_j(0) \times B_{j}(0)}$. 
  Now we choose an arbitrary but fixed $k \in \N_0$ with $k \leq M-1$ and $x \in \Rn$. The boundedness of $( \pa{\gamma}  p_{\e_l})_{l \in \N} \subseteq C^{0,s}(\RnRn)$ for all $\gamma \in \Non$ with $|\gamma| \leq k$ leads to 
  \begin{align*}
    \| p_{\e_l}(x,.) \|_{ C^{k,s}(\Rn) } 
    \leq \max_{\substack{ \gamma \in \N_0^n \\ |\gamma| \leq k} } \| \pa{\gamma} p_{\e_l} \|_{ C^{0,s}(\RnRn) }
    \leq C_{k}
  \end{align*} 
  for all $x\in \Rn$ and $l \in \N$. 
  By means of Lemma \ref{lemma:convergence} we obtain via a diagonal sequence argument the existence of a subsequence of $( p_{\e_l} )_{l \in \N}$ denoted by $( p_{\e_{l_r}} )_{r \in \N}$ and of a function $\tilde{p} \in C^{M-1}(\Rn)$ with the property
  \begin{align}\label{28}
    \pa{\gamma} p_{\e_{l_r}}(x,\xi) \xrightarrow[]{r \rightarrow \infty} \pa{\gamma} \tilde{p}(\xi) \qquad \textrm{pointwise for all } \xi \in \Rn 
  \end{align}
  and every $\gamma \in \Non$ with $|\gamma| \leq M-1$. On account of (\ref{27}) and (\ref{28}) the uniqueness of the limit gives us $q_{\alpha}(x,.)= \pa{\alpha } \tilde{p}$. This implies $p(x,.):= q_0(x,.) \in C^{M-1}(\Rn)$ for all $x \in \Rn$, $(i)$ and $(ii)$. Note that $(i)$ implies 
  for all $ \gamma \in \Non$ with $|\gamma| \leq M-1 $
  \begin{align*}
    \| \pa{\gamma}p(.,\xi) \|_{ C^{m,s}(\Rn) } \leq \max_{|\beta| \leq m}\| \p_x^{\beta} \pa{\gamma} p \|_{ C^{0,s}(\RnRn) } \leq C_{\gamma} \qquad \text{ for all } \xi \in \Rn.
  \end{align*}
  Consequently $p \in C^{m,s}S^0_{0,0}( \RnRn; M-1)$.
\end{proof}

\subsection[Reduction of Non-Smooth Pseudodifferential Operators]{Reduction of Non-Smooth Pseudodifferential Operators with Double Symbol} \label{section:symbolReduction}

In this subsection we derive a formula representing an operator with a non-smooth double symbol as an operator with a non-smooth single symbol.  
During the development of this work (however independent) Köppl generalized this result in his diploma thesis, cf.\;\cite[Theorem 3.33]{Koeppl}, for non-smooth double symbols of the symbol-class $C^{\tilde{m}, \tau} S^{m,m'}_{\rho, \delta}(\RnRnRnRn; N)$ where $N=\infty$. However, as we will show, the assumption $N= \infty$ (as assumed in the diploma thesis) may be weakend. Then the smoothness in $\xi$ is reduced by the order of $n$.

For the proof of the characterization of non-smooth pseudodifferential operators only the case $\rho=\delta=0$ is required. Thus we restrict the symbol reduction to this case. This significantly simplifies some proofs. The main idea of the symbol reduction is taken from that one of the smooth case, cf.\;e.g.\;\cite[Theorem 2.5]{KumanoGo}. Since the symbols are non-smooth in both variables, the proof has to be adopted to this modified condition. 

We begin with some auxiliary tools needed for the proof of the symbol reduction:

\begin{lemma}\label{lemma:SymbolReduction}
  Let $s > 0$, $s \notin \N_0$, $m,m' \in \R$ and $N \in \N_0 \cup \{ \infty \}$. Additionally we
  choose $l,l_0, l_0' \in \N_0$ such that
  \begin{align*}
    -2 l +m < -n, \quad  - 2 l_0 < -n \quad \textrm{and} \quad -2l_0' + 2l + m' < -n. 
  \end{align*}
  Furthermore, let $P:= p(x,D_x,x',D_{x'}) \in \op C^s_*S^{m,m'}_{0,0}(\RnRn \times \RnRn; N)$. 
  For $u \in \s$ we define $\tilde{p}: \R^{5n} \rightarrow \C$ by
  \begin{align*}
    \tilde{p}(x,\xi,x', \xi',x''):=  \<{-\xi + \xi'}^{-2l} \<{D_{x'}}^{2l} \hat{p}(x,\xi,x', \xi', x'')
  \end{align*}
  for all $x,\xi,x',\xi', x'' \in \Rn$, where
  \begin{align*}
    \hat{p}(x,\xi,x', \xi', x''):= \<{x'-x''}^{-2l_0} \<{D_{\xi'}} ^{2l_0} \left[ \<{\xi'}^{-2l_0'} \<{D_{x''}}^{2l_0'} p(x,\xi,x', \xi') u( x'') \right]
  \end{align*}
  for all $x,\xi,x',\xi', x'' \in \Rn$. Then we have for all $x \in \Rn$:
  \begin{align*}
    Pu(x) = \iiiint e^{-i(x'-x) \cdot \xi -i (x''-x') \cdot \xi'} \tilde{p}(x,\xi,x', \xi',x'') dx'' \dq \xi' dx' \dq \xi.
  \end{align*}
\end{lemma}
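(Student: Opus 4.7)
The plan is to start from the oscillatory-integral definition of $Pu(x)$, translate the position variables to $(x',x'')$, and then successively introduce the three regularizing factors via integration by parts in the oscillatory sense. After the three integrations by parts the resulting amplitude will decay sufficiently fast (thanks to the hypotheses on $l$, $l_0$, $l_0'$ and to $u\in\s$) that the oscillatory integral collapses into an absolutely convergent iterated Lebesgue integral in the stated order.

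Concretely, I would first use the definition of $p(x,D_x,x',D_{x'})$ and apply Theorem \ref{thm:VariableTransformationOfOsiInt} (to the $(y,\xi)$- and $(y',\xi')$-oscillatory integrals, joined via Theorem \ref{thm:FubiniForOsziInt}) to replace $y$ by $x'-x$ and $y'$ by $x''-x'$, obtaining
\begin{align*}
Pu(x)=\osiint e^{-i(x'-x)\cdot\xi-i(x''-x')\cdot\xi'}p(x,\xi,x',\xi')u(x'')\,dx'\,dx''\dq\xi\dq\xi'.
\end{align*}
Second, I would introduce the regularizing factors via integration by parts with respect to $x''$, then $\xi'$, then $x'$, using the identities
\begin{align*}
\<{D_{x''}}^{2l_0'}e^{-i(x''-x')\cdot\xi'}&=\<{\xi'}^{2l_0'}e^{-i(x''-x')\cdot\xi'},\\
\<{D_{\xi'}}^{2l_0}e^{-i(x''-x')\cdot\xi'}&=\<{x'-x''}^{2l_0}e^{-i(x''-x')\cdot\xi'},\\
\<{D_{x'}}^{2l}e^{ix'\cdot(\xi'-\xi)}&=\<{-\xi+\xi'}^{2l}e^{ix'\cdot(\xi'-\xi)},
\end{align*}
each application being legitimate in the oscillatory sense by Theorem \ref{thm:OscillatoryIntegralGleichung} (applied to the auxiliary amplitudes that arise at each intermediate step, whose membership in a suitable $\mathscr{A}^{\tilde m,\tilde N}_{\tilde\tau}$ follows from Remark \ref{bem:Absch}). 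After these three integrations by parts, the amplitude becomes exactly $\tilde p(x,\xi,x',\xi',x'')$.

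Finally, I would verify absolute integrability of the resulting integrand, which legitimizes dropping the ``Os-''. The symbol estimates on $p$, the Schwartz decay of $u$ and the Leibniz rule yield, for every $N\in\N_0$,
\begin{align*}
|\tilde p(x,\xi,x',\xi',x'')|\le C_N\<{-\xi+\xi'}^{-2l}\<{x'-x''}^{-2l_0}\<{\xi}^m\<{\xi'}^{m'-2l_0'}\<{x''}^{-N}.
\end{align*}
Integrating in the order $x''$, $\xi'$, $x'$, $\xi$: the condition $-2l_0<-n$ together with the Schwartz decay of $u$ bounds the $x''$-integral and, via a Peetre-type estimate (using $\<{x'}\le\sqrt 2\,\<{x'-x''}\<{x''}$), produces a $\<{x'}^{-2l_0}$-factor that makes the $x'$-integral finite; the condition $-2l_0'+2l+m'<-n$ controls the $\xi'$-integral (since then $\int\<{\xi'}^{m'-2l_0'}\<{-\xi+\xi'}^{-2l}d\xi'$ is bounded uniformly in $\xi$, by splitting into $|\xi'|\le|\xi|/2$ and $|\xi'|>|\xi|/2$); and $-2l+m<-n$ provides integrability in $\xi$. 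Combining these with the dominated convergence theorem applied to the cut-off regularization $\chi(\e\cdot,\e\cdot)$ that defines the oscillatory integral, the ``Os-'' may be dropped and Fubini applied, yielding the claimed iterated formula. I expect the main obstacle to be the careful Peetre-type bookkeeping needed to show that the iterated $L^1$-bounds close up consistently across all four integration variables while tracking the factors arising from $\<{D_{x'}}^{2l}$ falling on the product $\<{x'-x''}^{-2l_0}\cdot[\cdots]$.
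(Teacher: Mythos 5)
Your proposal captures the right strategy at the computational level — you identify the correct integration-by-parts identities (apply $\<{D_{x''}}^{2l_0'}$, then $\<{D_{\xi'}}^{2l_0}$, then $\<{D_{x'}}^{2l}$ against the phase, producing precisely the factors appearing in $\tilde p$), and your final $L^1$ bound and the accompanying Peetre bookkeeping agree with the paper's estimate (\ref{p20}). However, the way you justify these manipulations does not quite go through with the tools cited. The paper performs the change of variables and the integrations by parts \emph{inside} the cut-off regularization: it writes $Pu(x)=\lim_{\e\to 0}\iiiint\chi_{\e}\cdots$, changes variables and integrates by parts in an honest Lebesgue integral (where the needed derivatives lie in $L^1$), and only then establishes an $\e$-uniform dominating bound and passes to the limit. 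You instead propose to work at the level of the abstract oscillatory integral, invoking Theorem \ref{thm:VariableTransformationOfOsiInt}, Theorem \ref{thm:FubiniForOsziInt} and Theorem \ref{thm:OscillatoryIntegralGleichung}.

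Two points make that route problematic as stated. First, Theorem \ref{thm:VariableTransformationOfOsiInt} only covers translation by a \emph{fixed} vector; the substitution $x''=x'+y'$ has a translation amount that is itself an integration variable, so you would need to split the iterated oscillatory integral via Theorem \ref{thm:FubiniForOsziInt}, which requires explicit smoothness hypotheses on the amplitude in the $\xi$-variable. The lemma allows $N\in\N_0$ arbitrary (even $N=0$), so those hypotheses are not automatically met, and the amplitude after the substitution lives on $\R^{2n}\times\R^{2n}$ with only $N$ derivatives in the $\xi$-block — not enough to run the general osci-integral calculus as stated. Second, Theorem \ref{thm:OscillatoryIntegralGleichung} is formulated for the specific phase $e^{-iy\cdot\eta}$ and the specific operators $A^{l}(D,\cdot)$; you are integrating by parts with $\<{D_{x''}}^{2l_0'}$, $\<{D_{\xi'}}^{2l_0}$, $\<{D_{x'}}^{2l}$ against the bilinear phase $e^{-i(x'-x)\cdot\xi-i(x''-x')\cdot\xi'}$, which is a different situation and would need its own justification. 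Neither gap is fatal — the underlying computations are correct and could be legitimized with more care — but the cleanest and safest way is the paper's: do everything inside the $\chi_\e$-regularized Lebesgue integrals (where the required $L^1$ control such as (\ref{NR}) is elementary), obtain the $\e$-uniform majorant, check pointwise convergence $\tilde p_\e\to\tilde p$ from the convergence $\chi_\e\to 1$ together with the vanishing of its derivatives, and finish by dominated convergence.
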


\begin{proof}
  Let $x \in \Rn$ be arbitrary but fixed. 
    Additionally let $u \in \s$ and $\chi \in \mathcal{S}(\R^{4n})$ with $\chi(0) = 1$. For each $0 < \e < 1$ we denote $\chi_{\e}: \R^{4n} \rightarrow \C$ by
    \begin{align*}
      \chi_{\e}(\xi,\xi',y,y'):= \chi(\e \xi,\e \xi',\e y,\e y') \qquad \text{ for all } \xi,\xi',y,y' \in \Rn.
    \end{align*}
  We define $p_{\e,u}: \RnRnRnRn \times \Rn \rightarrow \C$ for every $0 < \e < 1$ by
  \begin{align*}
    p_{\e,u} (\tilde{x}, \xi, x', \xi', x'') := \chi_{\e}(\xi,\xi', x'-x , x''-x') p(\tilde{x}, \xi, x', \xi') u(x'').
  \end{align*}
  for all $\tilde{x}, \xi,x',\xi',x'' \in \Rn$. 
  Using Leibniz's rule, $p \in C^s_*S^{m,m'}_{0,0}(\RnRn \times \RnRn; N)$ and  $\chi \in \mathcal{S}(\R^{4n})$ provides for all $\alpha, \beta, \gamma \in \Non$: 
  \begin{align}\label{NR}
    \p^{\alpha}_{x''} \p^{\beta}_{\xi'} \p^{\gamma}_{x'}  p_{\e,u}(x,\xi,x',\xi', x'') \in L^1(\RnRnx{\xi}{x'} \times \RnRnx{\xi'}{x''}) .
  \end{align}  
  Due to the definition of the oscillatory integral, the change of variables $x':=x+y$ and $x'':= x' + y'$ and Fubini's theorem we obtain
  \begin{align}\label{p17}
    &Pu(x) = \osiint e^{-i(y \cdot \xi + y' \cdot \xi')} p(x,\xi,x+y,\xi') u(x+y+y')dy dy' \dq \xi \dq \xi' \notag \\
    &\quad = \lim_{\e \rightarrow 0} \iiiint e^{-i(y \cdot \xi + y' \cdot \xi')} \chi_{\e}(\xi,\xi',y,y') p(x,\xi,x+y,\xi') u(x+y+y')dy dy' \dq \xi \dq \xi' \notag \\
    &\quad = \lim_{\e \rightarrow 0} \iiiint e^{-i(x'-x) \cdot \xi -i (x''-x') \cdot \xi'} p_{\e,u}(x,\xi,x',\xi', x'') dx'' \dq \xi' dx' \dq \xi.
  \end{align} 
  Now we choose $l,l_0, l_0' \in \N_0$ as in the assumptions. Then we define for each $0< \e < 1$ the function $\tilde{p}_{\e}: \R^{5n} \rightarrow \C$ by
  \begin{align*}
    \tilde{p}_{\e}(\tilde{x},\xi,x', \xi',x''):=  \<{-\xi + \xi'}^{-2l} \<{D_{x'}}^{2l} \hat{p}_{\e}(\tilde{x},\xi,x', \xi', x'')
  \end{align*}
  for all $\tilde{x},\xi,x', \xi',x'' \in \Rn$, where the function $\hat{p}_{\e}: \R^{5n} \rightarrow \C$ is defined by
  \begin{align*}
    \hat{p}_{\e}(\tilde{x},\xi,x', \xi', x''):= \<{x'-x''}^{-2l_0} \<{D_{\xi'}} ^{2l_0} \left[ \<{\xi'}^{-2l_0'} \<{D_{x''}}^{2l_0'} p_{\e,u}(\tilde{x},\xi,x', \xi',x'') \right]
  \end{align*}
  for each $\tilde{x},\xi,x', \xi',x'' \in \Rn$. Additionally  we can integrate by parts in (\ref{p17}) due to (\ref{NR}) and get
  \begin{align}\label{p18}
    Pu(x) 
    = \lim_{\e \rightarrow 0} \iiiint e^{-i(x'-x) \cdot \xi -i (x''-x') \cdot \xi'} \tilde{p}_{\e}(x,\xi,x', \xi',x'') dx'' \dq \xi' dx' \dq \xi.
  \end{align}
  An application of the Leibniz rule, $p \in C^s_*S^{m,m'}_{0,0}(\RnRnRnRn; N)$ and $u \in \s$ 
  yields 
  the existence of a constant $C$, which is independent of $0< \e < 1$ 
  , such that
  \begin{align}\label{p20}
    &|e^{-i(x'-x) \cdot \xi -i (x''-x') \cdot \xi'} \tilde{p}_{\e}(x,\xi,x', \xi',x'')| 
     \leq C \<{\xi}^{m-2l} \<{\xi'}^{-2l_0' +m'+2l} \<{x'}^{-2l_0} \<{x''}^{2l_0 -M} \notag \\
    & \quad \in L^1(\RnRnx{x''}{\xi'} \times \RnRnx{x'}{\xi}).
  \end{align}
  Moreover,
  if we use the Leibniz rule, the pointwise convergence of $\chi_{\e}$ to $1$ and the pointwise convergence of every derivative of $\chi_{\e}$ to $0$, see e.g. \cite[Lemma 6.3]{KumanoGo}, we obtain 
  \begin{align}\label{p23}
    \tilde{p}_{\e}(x,\xi,x', \xi',x'') \xrightarrow[]{\e \rightarrow 0}  
    \tilde{p} (x,\xi,x', \xi',x'').
  \end{align}
  Hence applying Lebesgue's theorem to (\ref{p18}) concludes the proof.

\end{proof}

Making use of this integral representation we are able to show the following result:

\begin{lemma}\label{lemma:AbschatzungDoppelSymbol}
   Let $s > 0$, $s \notin \N_0$ and $m,m' \in \R$. Additionally let $N \in \N_0 \cup \{ \infty \}$ and $l' \in \N_0$ with $ l' \leq N$.  
  Furthermore, let $ \mathscr{B} \subset C^s_*S^{m,m'}_{0,0}(\RnRn \times \RnRn; N)$ be bounded and $u \in \s$. Assuming $p \in C^s_*S^{m,m'}_{0,0}(\RnRn \times \RnRn; N)$, we denote $P:= p(x,D_x, x', D_{x'})$.  
  Then we obtain the existence of a constant $C$, independent of $x \in \Rn$ and $p \in \mathscr{B}$, such that
  \begin{align*}
    |P u(x)| \leq C \<{x}^{-l'} \qquad \text{for all } x \in \Rn.
  \end{align*}
\end{lemma}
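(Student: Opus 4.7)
My plan is to build on the integral representation provided by Lemma \ref{lemma:SymbolReduction}. Choosing $l,l_0,l_0'\in\N_0$ sufficiently large, its conclusion expresses
\begin{align*}
Pu(x) = \iiiint e^{-i(x'-x)\cdot\xi-i(x''-x')\cdot\xi'}\,\tilde p(x,\xi,x',\xi',x'')\,dx''\,\dq\xi'\,dx'\,\dq\xi
\end{align*}
as an absolutely convergent integral, and a naive bound of it only yields $|Pu(x)|\le C$. To manufacture the additional factor $\<{x}^{-l'}$ I would perform one further integration by parts in $\xi$: for $l'$ even via the identity $e^{i(x-x')\cdot\xi}=\<{x-x'}^{-l'}\<{D_\xi}^{l'}e^{i(x-x')\cdot\xi}$, and for $l'$ odd via the analogous decomposition of Remark \ref{bem:eFunktion} built from the operator $A^{l'}(D_\xi,x-x')$. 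Because $\tilde p$ depends on $\xi$ only through $p$, this step places at most $l'$ derivatives in $\xi$ on $p$, which is permitted exactly because the hypothesis requires $l'\le N$.

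Combining the resulting factor $\<{x-x'}^{-l'}$ with the estimates coming from the boundedness of $\mathscr B\subset C^{s}_{\ast}S^{m,m'}_{0,0}(\RnRn\times\RnRn;N)$ and with the rapid decay of $v:=\<{D_{x''}}^{2l_0'}u\in\s$, I obtain a pointwise estimate of the form
\begin{align*}
|\text{integrand}|\le C\,\<{x-x'}^{-l'}\<{\xi-\xi'}^{-2l}\<{x'-x''}^{-2l_0}\<{\xi}^m\<{\xi'}^{m'-2l_0'}\<{x''}^{-M}
\end{align*}
for arbitrary $M\in\N_0$, with $C$ depending only on $u$, the chosen parameters and the seminorms of $\mathscr B$, but independent of $p\in\mathscr B$ and of $x\in\Rn$. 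Peetre's inequality $\<{x-x'}^{-l'}\le C\<{x}^{-l'}\<{x'}^{l'}$ then extracts the desired decay factor. It remains to verify that the resulting integral over $(\xi,\xi',x',x'')$ is bounded uniformly in $x$, which follows from a standard iteration of Peetre: the $\xi$-integral of $\<{\xi-\xi'}^{-2l}\<{\xi}^m$ reduces to $C\<{\xi'}^m$ provided $2l>n+|m|$; the $\xi'$-integral then converges for $2l_0'>n+m+m'$; the $x''$-integral of $\<{x'-x''}^{-2l_0}\<{x''}^{-M}$ yields $C\<{x'}^{-M}$ provided $2l_0>n+M$; and finally $\int\<{x'}^{l'-M}\,dx'$ is finite for $M>n+l'$. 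All these requirements can be met simultaneously by choosing the auxiliary parameters large enough.

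The main obstacle I anticipate is the odd-$l'$ case, where $\<{D_\xi}^{l'}$ is no longer a differential operator and the decomposition from Remark \ref{bem:eFunktion} produces a finite sum of terms. Each of them, however, shares exactly the same structural template as the even case (decay factor $\<{x-x'}^{-l'}$ or $\<{x-x'}^{-l'-1}$, and at most $l'$ $\xi$-derivatives hitting $p$) and is therefore controlled by the same argument. Once this is handled, the rest is careful bookkeeping of Peetre's inequality and of the parameter choices, and uniformity of the constant in $p\in\mathscr B$ is automatic because all symbol bounds enter only through the seminorms of the bounded set $\mathscr B$.
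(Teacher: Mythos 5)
Your approach is essentially the same as the paper's: apply Lemma \ref{lemma:SymbolReduction} for the absolutely convergent integral representation, then integrate by parts in $\xi$ using the operator $A^{l'}(D_\xi,x-x')$ from Remark \ref{bem:eFunktion} to gain a factor $\<{x-x'}^{-l'}$, and combine Remark \ref{bem:Absch}, the seminorm bounds on $\mathscr B$, and Peetre's inequality to conclude. One small imprecision: $\tilde p$ depends on $\xi$ not only through $p$ but also through the factor $\<{-\xi+\xi'}^{-2l}$; however, since all $\xi$-derivatives of $\<{-\xi+\xi'}^{-2l}$ are bounded by a multiple of $\<{-\xi+\xi'}^{-2l}$ itself, the constraint $l'\le N$ still suffices and the estimate goes through unchanged.
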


\begin{proof}
  An application of Lemma \ref{lemma:SymbolReduction}, Remark \ref{bem:eFunktion} and Remark \ref{bem:Absch} and of integration by parts with respect to $\xi$
  concludes the claim similarly to \cite[Lemma 5.14]{Diss}. 
\end{proof}

As in the smooth case, cf.\;e.g.\;\cite{KumanoGo}, we define for all $a \in C^{\tilde{m},s}_*S^{m}_{0,0}(\RnRnRn; N)$ the function $a_L(x,\xi):= \osint e^{-iy \cdot \eta} a(x, \eta + \xi, x+y) dy \dq \eta$. In order to verify that $a_L$ is a non-smooth single symbol, we need the next results:

\begin{prop}\label{prop:HilfslemmaIntAbschatzung}
  Let $m \in \R$ and $X$ be a Banach space with $X \hookrightarrow L^{\infty}(\Rn)$.
  Let $ l_0 \in \N_0$ with $-l_0 < -n$ and $\mathscr{B}$ be a set of functions $r: \RnRnRnRn \rightarrow \C$ which are smooth with respect to the fourth variable such that the next inequality holds for all $l \in \N_0$:
  \begin{align}\label{Stern}
    \| \<{D_y}^{2l} r(.,\xi,\eta,y) \|_X \leq C_{l} \<{y}^{-l_0} \<{\xi + \eta}^m \qquad \text{for all } \xi, \eta, y \in \Rn, r \in \mathscr{B}.
  \end{align}
  Then $ \int e^{-iy\cdot \eta } r(x,\xi, \eta,y) dy \in L^1(\Rn_{\eta})$ for all $x,\xi \in \Rn$. If we define  
  \begin{align*}
    I(x,\xi) := \int \left[ \int e^{-iy\cdot \eta } r(x,\xi,\eta,y) dy \right] \dq \eta 
  \end{align*}
  for arbitrary $x, \xi \in \Rn$ and $r \in \mathscr{B}$ we have
  \begin{align*}
    \left\| I(.,\xi) \right\|_{X} \leq C \<{\xi}^m \qquad \text{ for all } \xi \in \Rn \text{ and } r \in \mathscr{B}.
  \end{align*}
\end{prop}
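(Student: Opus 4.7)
The plan is to apply integration by parts in $y$ to trade powers of $\<{\eta}$ against the decay of $\<{D_y}^{2l} r$ in $y$, and then use Peetre's inequality to split off $\<{\xi+\eta}^m$ into $\<{\xi}^m$ times an integrable factor in $\eta$.

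First I would fix $x,\xi \in \Rn$ and $l \in \N_0$ large, and use the identity $\<{D_y}^{2l} e^{-iy \cdot \eta} = \<{\eta}^{2l} e^{-iy \cdot \eta}$. The hypothesis at level $0$ already gives $\| r(\cdot,\xi,\eta,y) \|_X \leq C \<{y}^{-l_0}$ with $-l_0 < -n$, so via the embedding $X \hookrightarrow L^\infty(\Rn)$ the function $y \mapsto r(x,\xi,\eta,y)$ lies in $L^1(\Rn_y)$ and integration by parts produces no boundary terms. This yields
\begin{align*}
  \int e^{-iy \cdot \eta} r(x,\xi,\eta,y)\, dy \;=\; \<{\eta}^{-2l} \int e^{-iy \cdot \eta} \<{D_y}^{2l} r(x,\xi,\eta,y)\, dy.
\end{align*}
Using $X \hookrightarrow L^\infty$ and the assumption (\ref{Stern}), the right-hand side is bounded in absolute value by $C_l \<{\eta}^{-2l} \<{\xi+\eta}^m \int \<{y}^{-l_0} dy$, which by Peetre's inequality $\<{\xi+\eta}^m \leq C_m \<{\xi}^{|m|} \<{\eta}^{|m|}$ is integrable over $\Rn_\eta$ as soon as $2l > n + |m|$. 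This establishes the $L^1_\eta$-claim.

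Second I would estimate $\| I(\cdot,\xi)\|_X$ directly. Viewing the double integral as a Bochner integral of an $X$-valued function of $(y,\eta)$ and applying Minkowski's inequality for Bochner integrals, followed by the same integration by parts as above, gives
\begin{align*}
  \| I(\cdot,\xi) \|_X
  &\leq \int \<{\eta}^{-2l} \int \left\| \<{D_y}^{2l} r(\cdot,\xi,\eta,y) \right\|_X dy \dq \eta \\
  &\leq C_l \int \<{\eta}^{-2l} \<{\xi+\eta}^m \dq \eta \cdot \int \<{y}^{-l_0} dy.
\end{align*}
Applying Peetre's inequality once more yields $\<{\xi+\eta}^m \leq C_m \<{\xi}^m \<{\eta}^{|m|}$, so
\begin{align*}
  \| I(\cdot,\xi) \|_X \leq C\, \<{\xi}^m \int \<{\eta}^{-2l + |m|} \dq \eta \leq C \<{\xi}^m
\end{align*}
provided $l$ is chosen with $2l > n + |m|$, and the constant $C$ is independent of $r \in \mathscr{B}$ since the $C_l$ in (\ref{Stern}) is uniform over $\mathscr{B}$.

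The only subtle step is the legitimacy of Minkowski's inequality for the $X$-valued integrand, which is covered by the $L^1$-absolute convergence in the $X$-norm established after the integration by parts; the remaining manipulations (integration by parts, Peetre estimate, choice of $l$) are routine. Since $l$ appears only as a tool to gain decay, it may be freely enlarged and the bound depends only on the hypotheses and $m,n,l_0$, not on the particular $r \in \mathscr{B}$, which gives the required uniformity.
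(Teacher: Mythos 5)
Your proof is correct and follows essentially the same route as the paper: integrate by parts in $y$ to trade $\<{\eta}^{-2l}$ against the assumed bound on $\<{D_y}^{2l}r$, then apply Peetre's inequality to split $\<{\xi+\eta}^m \leq C\<{\xi}^m\<{\eta}^{|m|}$ and choose $2l>n+|m|$ to close the $\eta$-integral. The only cosmetic difference is that you invoke Minkowski's inequality for Bochner integrals by name, whereas the paper simply moves the $X$-norm inside the double integral; these are the same step.
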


\begin{proof}
  Let $\xi \in \Rn$ and $r \in \mathscr{B}$. Making use of the assumptions of the proposition we can show $\<{D_y}^{2\tilde{l}} r(x,\xi,\eta,y) \in L^1(\Rn_y)$ for each $x, \xi, \eta \in \Rn$ and $\tilde{l} \in \N_0$ due to (\ref{Stern}). 
  Consequently we can integrate by parts and we obtain for all $l \in \N_0$ and $x,\eta \in \Rn$:
  \begin{align}\label{p27}
    \int e^{-iy \cdot \eta} r(x,\xi,\eta,y) dy = \<{\eta}^{-2l} \int e^{-iy \cdot \eta} \<{D_y}^{2l} r(x,\xi,\eta,y) dy.
  \end{align} 
  Now we choose an $l \in \N_0$ with $|m| - 2l < -n$. Then we conclude
  \begin{align*}
    \|I(.,\xi)\|_{X} &= \left\| \int \<{\eta}^{-2l} \int e^{-iy \cdot \eta} \<{D_y}^{2l} r(.,\xi,\eta,y) dy \dq \eta \right\|_{X} \\
    &\leq \int \<{\eta}^{-2l} \int \left\| \<{D_y}^{2l} r(.,\xi,\eta,y) \right\|_{X} dy \dq \eta \\
    &\leq C \<{\xi}^{m} \int \<{\eta}^{-2l + |m|}  \int \<{y}^{-l_0} dy \dq \eta
    \leq  C \<{\xi}^m \qquad \text{for all } x, \xi \in \Rn, r \in \mathscr{B}.
  \end{align*}
  In particular this provides $\int e^{-iy \cdot \eta}r(x,\xi, \eta,y) dy \in L^1(\Rn_{\eta})$ for all $x,\xi \in \Rn$.
\end{proof}

\begin{bem}\label{bem:IntAbschatzung}
  In particular we can apply Proposition \ref{prop:HilfslemmaIntAbschatzung} on $X:= C^0_b(\Rn)$ and on the function $r: \RnRnRnRn \rightarrow \C$  defined by
  \begin{align*}
    r(x,\xi,\eta,y):= A^{l_0}(D_{\eta}, y) a(x,\xi+\eta,x+y) \qquad \text{for all } x, \xi, \eta, y \in \Rn.
  \end{align*}
\end{bem}

\begin{prop}\label{prop:OsziInt=Int}
  Let $0< s < 1$, $\tilde{m} \in \N_0$ and $m \in \R$. Additionally let $N \in \N_0 \cup \{ \infty \}$ with $n <  N$. Moreover, let $a \in C^{\tilde{m},s}_*S^{m}_{0,0}(\RnRnRn; N)$. Considering an $l_0 \in \N_0$ with $n< l_0 \leq N$, we define $r: \RnRnRnRn \rightarrow \C$ as in Remark \ref{bem:IntAbschatzung}.
  Then $ \int e^{-iy\cdot \eta } r(x,\xi, \eta,y) dy \in L^1(\Rn_{\eta})$ for all $x,\xi \in \Rn$ and we obtain 
  \begin{align*}
    \osint e^{-iy\cdot \eta } r(x,\xi,\eta,y) dy \dq \eta = \int \left[ \int e^{-iy\cdot \eta } r(x,\xi,\eta,y) dy \right] \dq \eta.
  \end{align*}
\end{prop}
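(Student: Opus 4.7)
The plan is to use the standard cutoff-and-integrate-by-parts argument to reduce the oscillatory integral to an absolutely convergent iterated integral, and then pass to the limit via dominated convergence. The key observation is that $A^{l_0}(D_\eta, y)$ already supplies $\<y\>^{-l_0}$ decay with $l_0>n$, so the inner $y$-integral will be absolutely convergent; one only has to integrate by parts in $y$ (using $\<D_y\>^{2l} e^{-iy\cdot\eta} = \<\eta\>^{2l} e^{-iy\cdot\eta}$) to produce enough $\<\eta\>$-decay to also make the outer $\eta$-integral absolutely convergent.

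First I would use Remark \ref{bem:Absch} and the symbol estimates of $a$ together with the fact that $a \in C^{\tilde{m},s}_{\ast} S^m_{0,0}(\RnRnRn; N)$ is $C^\infty$ in its third argument to see that for every $\alpha\in\Non$,
\[
  |\partial_y^\alpha r(x,\xi,\eta,y)| \leq C_{\alpha,x,\xi} \<\eta\>^{|m|}\<y\>^{-l_0},
\]
so $r(x,\xi,\cdot,\cdot)$ satisfies the hypotheses of Proposition \ref{prop:HilfslemmaIntAbschatzung} (with $X=C_b^0$, as in Remark \ref{bem:IntAbschatzung}). In particular, the inner integral $\int e^{-iy\cdot\eta} r(x,\xi,\eta,y)\,dy$ converges absolutely and, after integration by parts with $\<D_y\>^{2l}$, it lies in $L^1(\Rn_\eta)$.

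For the equality of the two integrals I would fix $\chi\in\mathcal{S}(\RnRn)$ with $\chi(0,0)=1$ and set $\chi_\e(y,\eta):=\chi(\e y,\e\eta)$ for $\e\in(0,1]$. By the definition of the oscillatory integral and Fubini's theorem (valid because $\chi_\e\in\mathcal{S}$),
\[
  \iint \chi_\e(y,\eta) e^{-iy\cdot\eta} r(x,\xi,\eta,y)\,dy\dq\eta = \int\Big[\int\chi_\e(y,\eta) e^{-iy\cdot\eta} r(x,\xi,\eta,y)\,dy\Big]\dq\eta.
\]
Choosing $l\in\N$ with $2l-|m|>n$ and integrating by parts in $y$ via $e^{-iy\cdot\eta}=\<\eta\>^{-2l}\<D_y\>^{2l}e^{-iy\cdot\eta}$, the inner integral becomes $\<\eta\>^{-2l}\int e^{-iy\cdot\eta}\<D_y\>^{2l}[\chi_\e \, r]\,dy$. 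Since $\p_y^\beta\chi_\e(y,\eta)=\e^{|\beta|}(\p_1^\beta\chi)(\e y,\e\eta)$ is bounded uniformly in $\e\in(0,1]$, Leibniz's rule together with the pointwise estimates on $\p_y^\alpha r$ yields the uniform bound $|\<D_y\>^{2l}[\chi_\e r]|\leq C\<y\>^{-l_0}\<\xi+\eta\>^m$, and hence
\[
  \Big|\int\chi_\e(y,\eta)e^{-iy\cdot\eta}r\,dy\Big|\leq C\<\eta\>^{-2l}\<\xi+\eta\>^m \in L^1(\Rn_\eta).
\]

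The proof then concludes with two applications of Lebesgue's dominated convergence theorem: for each fixed $\eta$, the $y$-integrand is dominated by $C\<y\>^{-l_0}\<\xi+\eta\>^m$ and $\chi_\e(y,\eta)\to 1$, so the inner integrals converge pointwise in $\eta$ to $\int e^{-iy\cdot\eta}r\,dy$; the $\eta$-integrands are in turn dominated uniformly in $\e$ by the $L^1$-function above, so one may pass the limit through the outer integral. The only delicate point - and the main potential obstacle - is the uniform-in-$\e$ control of $\<D_y\>^{2l}[\chi_\e r]$, but this is immediate since the $\e$-factors arising from derivatives of $\chi_\e$ are bounded by $1$; the rest is routine manipulation of amplitudes, closely modelled on e.g.\;\cite[Lemma 6.3]{KumanoGo} and the proofs already given in Section \ref{section:ExtensionSpaceOfAmplitudes}.
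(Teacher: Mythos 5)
Your proposal is correct and follows essentially the same route as the paper's own proof: establish the pointwise estimate $|\p_y^\alpha r|\lesssim\<y\>^{-l_0}\<\xi+\eta\>^m$ via Remark \ref{bem:Absch} and the symbol class, integrate by parts in $y$ against $\<D_y\>^{2l}$ to gain $\<\eta\>^{-2l}$-decay uniformly in $\e$, and then pass to the limit $\e\to 0$ by two applications of the dominated convergence theorem. The only cosmetic difference is that the paper regularizes with a one-variable $\chi(\e y)$ inside and $\chi(\e\eta)$ outside rather than a single $\chi(\e y,\e\eta)\in\mathcal{S}(\RnRn)$; this is immaterial since Theorem \ref{thm:ExistenceOfOscillatoryIntegral} makes the oscillatory integral independent of the choice of cutoff.
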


\begin{proof}
  On account of $a(x, \eta + \xi, x+y) \in \mathscr{A}^{m,N}_0(\RnRnx{y}{\eta}) $ for all $x,\xi \in \Rn$ Theorem \ref{thm:OscillatoryIntegralGleichung} yields the existence of the oscillatory integral. Assuming an arbitrary $\chi \in \s$ with $\chi(0)=1$, we get for fixed $x,\eta, \xi \in \Rn$:
  \begin{align}\label{p61}
    e^{-iy\cdot \eta } \chi(\e y) r(x,\xi,\eta,y) \xrightarrow[]{\e \rightarrow 0} e^{-iy\cdot \eta } r(x,\xi,\eta,y) \quad \text{ for all } y \in \Rn.
  \end{align}
  Now let $0 < \e \leq 1$. We can prove the next two estimates if we use $\chi \in C^{\infty}_b(\Rn)$ and Remark \ref{bem:Absch}:
  \begin{align}
    | \p_y^{\alpha} r(x,\xi,\eta,y)| &\leq C_{\alpha,m} \<{y}^{-l_0} \<{\xi}^m \<{\eta}^{|m|} \qquad \text{for all } \alpha \in \Non,\label{p62}\\
    | \<{D_y}^{2l'} [\chi(\e y) r(x,\xi,\eta,y)]| &\leq C_{l',m} \<{y}^{-l_0} \<{\xi}^m \<{\eta}^{|m|} \qquad \text{for all } l' \in \N_0, \label{p63}
  \end{align}
  uniformly in $x,\xi, \eta, y \in \Rn$ and in $0 < \e \leq 1$. 
  Using $\chi \in \s \subseteq C^{\infty}_b(\Rn) $ and 
  integration by parts, which is possible because of (\ref{p63}), 
  first and (\ref{p63}) 
  provides for fixed $x,\xi \in \Rn$ and an arbitrary $l \in \N_0$ with $|m|-2l < -n$:
  \begin{align}\label{p65}
    &\left| \chi(\e \eta) \hspace{-0.5mm} \int \hspace{-0.5mm} e^{-iy\cdot \eta } \chi(\e y) r(x,\xi,\eta,y) dy \right| \leq C \hspace{-1mm} \int \hspace{-0.5mm} \left|  e^{-iy\cdot \eta } \<{\eta}^{-2l} \<{D_y}^{2l} [\chi(\e y) r(x,\xi,\eta,y)] \right| \hspace{-0.5mm} dy \notag \\
    &\qquad \leq C_{l,m,\xi} \<{\eta}^{-2l + |m|} \int \<{y}^{-l_0} dy \leq C_{l,m,\xi} \<{\eta}^{-2l + |m|} \in L^1(\Rn_{\eta}).
  \end{align}
  Here the constant $C_{l,m,\xi}$ is independent of $\e \in (0, 1]$. Setting $l'=0$, (\ref{p63})
  provides for each fixed $x,\xi,\eta \in \Rn$, that 
  $\{ y \mapsto \chi(\e y) r(x,\xi,\eta,y): 0 < \e \leq 1 \}$ 
  has a $L^1(\Rn_y)$-majorant. 
  Applying  Lebesgue's theorem we obtain
  \begin{align}\label{p66}
    \chi(\e \eta) \int e^{-iy\cdot \eta } \chi(\e y) r(x,\xi,\eta,y) dy \xrightarrow[]{\e \rightarrow 0} \int e^{-iy\cdot \eta } r(x,\xi,\eta,y) dy
  \end{align}
  for all $x,\xi,\eta \in \Rn$. Applying Lebesgue's theorem again we get for all $x,\xi \in \Rn$:
  \begin{align*}
    \osint e^{-iy\cdot \eta } r(x,\xi,\eta,y) dy \dq \eta &= \lim_{\e \rightarrow 0} \int \chi(\e \eta) \int e^{-iy\cdot \eta } \chi(\e y) r(x,\xi,\eta,y) dy \dq \eta \\
    &= \int \left[ \int e^{-iy\cdot \eta } r(x,\xi,\eta,y) dy  \right]\dq \eta.
  \end{align*}
  The assumptions of Lebesgue's theorem are fulfilled because of (\ref{p65}) and (\ref{p66}).
\end{proof}

The previous results enable us to show the following statement:

\begin{lemma}\label{lemma:AbschVonaL}
  Let $0< s < 1$, $\tilde{m} \in \N_0$ and $m \in \R$. Additionally let $\mathscr{B}$ be a bounded subset of $ C^{\tilde{m},s}_*S^{m}_{0,0}(\RnRnRn; N)$ and $N \in \N_0 \cup \{ \infty \}$  with $n < N $. We define for each $a \in \mathscr{B}$ the function $a_L: \RnRn \rightarrow \C$ by
  \begin{align*}
    a_L(x,\xi) := \osint e^{-iy \cdot \eta} a(x, \eta + \xi, x+y) dy \dq \eta \qquad \text{for all } x, \xi \in \Rn.
  \end{align*}
  Then there is a constant $C$, independent of $x,\xi \in \Rn$ and $a \in \mathscr{B}$, such that
  \begin{align*}
    |\p^{\delta}_x a_L (x, \xi)| \leq C \<{\xi}^m \qquad \text{for each } \delta \in \Non \text{ with } |\delta| \leq \tilde{m}.
  \end{align*}
\end{lemma}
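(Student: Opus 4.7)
The plan is to reduce the oscillatory integral defining $a_L$ to an absolutely convergent iterated integral, and then to apply Proposition \ref{prop:HilfslemmaIntAbschatzung} to control the $x$-derivatives uniformly. First I would fix $l_0 \in \N_0$ with $n < l_0 \leq N$ and set
$$r(x,\xi,\eta,y) := A^{l_0}(D_\eta,y)\,a(x,\xi+\eta, x+y).$$
Proposition \ref{prop:OsziInt=Int} (together with Remark \ref{bem:IntAbschatzung}) then rewrites $a_L(x,\xi)$ as the iterated integral $\int\int e^{-iy\cdot\eta} r(x,\xi,\eta,y)\,dy\,\dq\eta$.

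Next, to differentiate under the integral, I would exploit that for $|\delta|\leq \tilde m$, with $x$ appearing in both the first and the third slot of $a$, Leibniz gives
$$\p_x^\delta\bigl[a(x,\xi+\eta,x+y)\bigr] = \sum_{\delta_1+\delta_2=\delta}\binom{\delta}{\delta_1}(\p_1^{\delta_1}\p_3^{\delta_2}a)(x,\xi+\eta,x+y),$$
where $\p_1,\p_3$ are derivatives in the first and third slots. Hence $r_\delta := \p_x^\delta r$ is a finite linear combination of terms of the form $A^{l_0}(D_\eta,y)\,(\p_1^{\delta_1}\p_3^{\delta_2}a)(x,\xi+\eta,x+y)$. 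Since $\mathscr{B} \subset C^{\tilde m,s}_* S^m_{0,0}(\RnRnRn;N)$ is bounded and $a$ is smooth in the third slot, every such term satisfies $|\pa{\gamma}\p_1^{\delta_1}\p_3^{\delta_2}a(x,\xi+\eta,x+y)|\leq C_{\gamma,\delta}\<{\xi+\eta}^m$ for $|\gamma|\leq N$, uniformly in $a\in\mathscr{B}$. Combining this with Remark \ref{bem:Absch} on the weight structure of $A^{l_0}(D_\eta,y)$ yields, for every $l\in\N_0$,
$$\bigl\|\<{D_y}^{2l} r_\delta(\cdot,\xi,\eta,y)\bigr\|_{L^\infty(\Rn_x)} \leq C_{l,\delta}\<{y}^{-l_0}\<{\xi+\eta}^{m},$$
uniformly in $a\in\mathscr{B}$ and $\xi,\eta,y\in\Rn$.

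With this bound in hand, the $l=0$ case provides an integrable majorant (after the partial integration in $\eta$ implicit in $A^{l_0}$) that justifies, by dominated convergence, the interchange of $\p_x^\delta$ with the $(y,\eta)$-integration:
$$\p_x^\delta a_L(x,\xi) = \int\int e^{-iy\cdot\eta} r_\delta(x,\xi,\eta,y)\,dy\,\dq\eta.$$
Applying Proposition \ref{prop:HilfslemmaIntAbschatzung} with $X=C^0_b(\Rn_x)$ and $r_\delta$ in place of $r$ then produces the desired estimate $|\p_x^\delta a_L(x,\xi)|\leq C\<{\xi}^m$ uniformly in $a\in\mathscr{B}$.

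The main obstacle I expect is the bookkeeping needed to verify the hypothesis $\|\<{D_y}^{2l}r_\delta(\cdot,\xi,\eta,y)\|_{L^\infty}\leq C_l\<{y}^{-l_0}\<{\xi+\eta}^m$: one must track how $A^{l_0}(D_\eta,y)$ distributes over the Leibniz expansion of $\p_x^\delta a(x,\xi+\eta,x+y)$, and confirm that the symbol-class estimates for all mixed derivatives $\pa{\gamma}\p_1^{\delta_1}\p_3^{\delta_2}a$ combine with the weights from $A^{l_0}(D_\eta,y)$ and from additional $\<{D_y}^{2l}$ applied to the third slot in the required way. Once this pointwise estimate is established, the commutation with the integral and the final invocation of Proposition \ref{prop:HilfslemmaIntAbschatzung} are routine.
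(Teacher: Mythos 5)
Your proposal is correct and follows essentially the same route as the paper: reduce the oscillatory integral to an absolutely convergent one via $A^{l_0}$ and Proposition \ref{prop:OsziInt=Int}, commute $\p_x^\delta$ with the integral, and apply Proposition \ref{prop:HilfslemmaIntAbschatzung} (with $X = C^0_b$) together with Remark \ref{bem:IntAbschatzung} to the differentiated integrand. The paper organizes this slightly differently---it proves the $\delta = 0$ case first and then invokes Lemma \ref{lemma:VertauscheAblNachXUndOsziInt} (rather than a direct dominated-convergence argument) to reduce $\delta\neq 0$ to $\delta = 0$ applied to the bounded family $\{\p_x^\delta a : a\in\mathscr{B}\}$---but the key estimates and the final appeal to Proposition \ref{prop:HilfslemmaIntAbschatzung} are the same as in your argument.
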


Note that Theorem \ref{thm:ExistenceOfOscillatoryIntegral} yields the existence of $a_L(x, \xi)$ for all $x,\xi \in \Rn$ since $a(x, \eta + \xi, x+y) \in \mathscr{A}^{m, N}_0(\RnRnx{y}{\eta})$.
For the proof of Lemma \ref{lemma:AbschVonaL} we need:

\begin{lemma}\label{lemma:VertauscheAblNachXUndOsziInt}
  Let $N \in \N_0 \cup \{ \infty \}$ with $n < N$. Assuming $a \in C^{ \tilde{m} ,s} S^m_{\rho,0}(\RnRnRn;N)$ with $\tilde{m} \in \N_0$, $m \in \R$, $0 \leq \rho \leq 1$ and $0<s<1$, we define $a_L:\RnRn \rightarrow \C$ as in Lemma \ref{lemma:AbschVonaL}.
  Then we get for each $\beta \in \Non$ with $|\beta| \leq \tilde{m}$:
  \begin{align*}
    \p^{\beta}_x a_L(x,\xi) = \osint e^{-iy \cdot \eta} \p^{\beta}_x \{ a(x, \eta + \xi, x+y) \} dy \dq \eta \qquad \text{for all } x, \xi \in \Rn.
  \end{align*}
\end{lemma}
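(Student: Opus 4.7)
The plan is to use Theorem~\ref{thm:ExistenceOfOscillatoryIntegral} to rewrite $a_L(x,\xi)$ as an absolutely convergent Lebesgue integral, differentiate under the integral sign, and then convert back to the oscillatory form.

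First I would choose $l,l'\in\N_0$ with $l>n+|m|$ and $N\geq l'>n$; both choices are possible since $N>n$. Fixing $x,\xi$ as parameters and using Peetre's inequality $\<{\eta+\xi}^m\leq C\<{\xi}^{|m|}\<{\eta}^{|m|}$ one checks that $(y,\eta)\mapsto a(x,\eta+\xi,x+y)$ belongs to $\mathscr{A}^{|m|,N}_0(\R^n_y\times\R^n_\eta)$, so Theorem~\ref{thm:ExistenceOfOscillatoryIntegral} yields
\begin{align*}
  a_L(x,\xi)=\iint e^{-iy\cdot\eta}\,A^{l'}(D_\eta,y)\bigl[A^l(D_y,\eta)\,a(x,\eta+\xi,x+y)\bigr]\,dy\,\dq\eta.
\end{align*}

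Next, for each $\tilde\beta\in\Non$ with $|\tilde\beta|\leq \tilde m$, an application of Leibniz's rule to the two occurrences of $x$ shows that $\p_x^{\tilde\beta}\{a(x,\eta+\xi,x+y)\}$ is a finite sum of terms $(\p_x^{\beta_1}\p_{x'}^{\beta_2}a)(x,\eta+\xi,x+y)$ with $|\beta_1|+|\beta_2|\leq\tilde m$, each satisfying the symbol estimates of $C^{\tilde m,s}S^m_{\rho,0}$. Combined with Remark~\ref{bem:Absch} (applied to the operators $A^l$, $A^{l'}$) and Peetre's inequality, this provides the pointwise bound
\begin{align*}
  \left|\p_x^{\tilde\beta}\bigl\{A^{l'}(D_\eta,y)[A^l(D_y,\eta)\,a(x,\eta+\xi,x+y)]\bigr\}\right|\leq C_{x,\xi}\,\<{\eta}^{-l+|m|}\<{y}^{-l'},
\end{align*}
which is $L^1(\R^n_y\times\R^n_\eta)$, locally uniformly in $(x,\xi)$. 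The standard theorem on differentiation under the integral sign, applied $|\beta|$ times iteratively, therefore allows $\p_x^\beta$ to be moved inside the double integral; since $A^{l'}(D_\eta,y)$ and $A^l(D_y,\eta)$ are differential operators in $y,\eta$ only (with $y$- respectively $\eta$-dependent coefficients), they commute with $\p_x^\beta$, giving
\begin{align*}
  \p_x^\beta a_L(x,\xi)=\iint e^{-iy\cdot\eta}\,A^{l'}(D_\eta,y)\bigl[A^l(D_y,\eta)\,\p_x^\beta\{a(x,\eta+\xi,x+y)\}\bigr]\,dy\,\dq\eta.
\end{align*}

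Since $\p_x^\beta\{a(x,\eta+\xi,x+y)\}$ still lies in $\mathscr{A}^{|m|,N}_0(\R^n_y\times\R^n_\eta)$ by the same Leibniz-plus-Peetre argument used above, Theorem~\ref{thm:ExistenceOfOscillatoryIntegral} applied in the reverse direction identifies the right-hand side with $\osint e^{-iy\cdot\eta}\p_x^\beta\{a(x,\eta+\xi,x+y)\}\,dy\,\dq\eta$, which is exactly the claim. The main technical point is the uniform $L^1$-domination in step two; once this is in place the rest is routine bookkeeping with $A^l,A^{l'}$ and Theorem~\ref{thm:ExistenceOfOscillatoryIntegral}.
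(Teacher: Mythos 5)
Your proof is correct, but it takes a more concrete route than the paper's one-line argument. The paper applies Corollary~\ref{kor:VertauschbarkeitVonOsziIntUndLimes} to the family of difference quotients
$h^{-1}\bigl(a(x+he_j,\eta+\xi,x+he_j+y)-a(x,\eta+\xi,x+y)\bigr)$,
which for small $h$ is bounded in $\mathscr{A}^{m,N}_0(\RnRnx{y}{\eta})$ and whose $y$- and $\eta$-derivatives up to order $N$ converge pointwise to those of $\p_{x_j}\{a(x,\eta+\xi,x+y)\}$; iterating in $j$ gives the claim. You instead unfold the oscillatory integral into an absolutely convergent Lebesgue integral via Theorem~\ref{thm:ExistenceOfOscillatoryIntegral}, differentiate under the integral sign with the $L^1$-domination you established, and refold. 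In substance the two arguments rest on the same cancellation device (the operators $A^l$, $A^{l'}$), and your version essentially inlines the proof of Corollary~\ref{kor:VertauschbarkeitVonOsziIntUndLimes}; it is self-contained and makes the dominated-convergence input explicit, while the paper's version is shorter because it reuses an existing lemma. One small remark on your final sentence: the application of the differentiation-under-the-integral theorem needs, besides the $L^1$ bound, the \emph{joint continuity} of $\p_x^{\tilde\beta}\bigl\{A^{l'}(D_\eta,y)\bigl[A^l(D_y,\eta)\,a(x,\eta+\xi,x+y)\bigr]\bigr\}$ in $(x,\xi,y,\eta)$ for each $|\tilde\beta|\le\tilde m$; since $a$ has only $C^{\tilde m,s}$ regularity in its first argument this is not automatic, but it does follow from the continuity requirement $i)$ in the definition of $C^{\tilde m,s}S^{m}_{\rho,0}(\RnRn\times\Rn;N)$ together with $l'\le N$, so it deserves a word rather than being subsumed under ``routine bookkeeping.''
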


\begin{proof}
  The claim follows from Corollary \ref{kor:VertauschbarkeitVonOsziIntUndLimes} and from approximation of the function $\p_{x_j} \{ a(x,\xi + \eta, x+y) \}$ by difference quotients.
\end{proof}

 Now we are able to prove Lemma \ref{lemma:AbschVonaL}:

\begin{proof}[Proof of Lemma \ref{lemma:AbschVonaL}]
  Using Theorem \ref{thm:OscillatoryIntegralGleichung}, Proposition \ref{prop:OsziInt=Int} and Remark \ref{bem:IntAbschatzung} we get for each $l_0 \in \N_0$ with $n< l_0 \leq N$
  \begin{align*}
    |a_L(x,\xi)|
      &= \left| \osint e^{-iy \cdot \eta} A^{l_0}(D_{\eta}, y)  a(x,\xi+\eta,x+y) dy \dq \eta \right| \\
      &= \left| \iint e^{-iy \cdot \eta} A^{l_0}(D_{\eta}, y) a(x,\xi+\eta,x+y) dy \dq \eta \right|
    \leq C \<{\xi}^m
  \end{align*}
  for all $x,\xi \in \Rn$ and of $a \in \mathscr{B}$.
  Thus the claim holds for $\delta =0$. Now we assume $\delta \in \Non$ with $|\delta| \leq \tilde{m}$. 
  Due to Remark \ref{bem:AbleitungVonPDOWiederPDO} $\mathscr{B}^{\delta}:= \left\{ \p^{\delta}_x a : a \in \mathscr{B} \right\} $ is bounded in $ C^{\tilde{m} - |\delta|,s}_*S^{m}_{0,0}(\RnRnRn; N)$.  On account of Lemma \ref{lemma:VertauscheAblNachXUndOsziInt} the case $\delta=0$ applied on the set $\mathscr{B}^{\delta}$, gives us
  \begin{align*}
    |\p^{\delta}_x a_L (x, \xi)| \leq C \<{\xi}^m \qquad \text{for all } x, \xi \in \Rn \text{ and } a \in \mathscr{B}.
  \end{align*}
  \vspace*{-1cm}

\end{proof}

Having in mind the definition of the Hölder spaces, we need the next two statements to show that $a_L$ is a non-smooth symbol whose coefficient is in a Hölder space:

\begin{prop}\label{prop:fürAbschVonHoelderNorm}
  Let $0 < s < 1$, $\tilde{m} \in \N_0$, $N \in \N_0 \cup \{ \infty \}$ and $m \in \R$. Moreover, let $\mathscr{B} \subseteq C^{\tilde{m},s}S^{m}_{0,0}(\RnRnRn; N)$ be bounded. Then we have for each $\gamma, \beta \in \Non$ with $|\beta| \leq N$
  \begin{align*}
    \max_{|\alpha| \leq \tilde{m}} \left\{ \frac{ | \p_{x_1}^{\alpha} \p_y^{\gamma} \p_{\eta}^{\beta} a(x_1, \xi + \eta, x_1 + y) - \p_{x_2}^{\alpha} \p_y^{\gamma} \p_{\eta}^{\beta} a(x_2, \xi + \eta, x_2 + y) | }{|x_1 - x_2|^s}  \right\} \leq C \<{\xi + \eta}^m
  \end{align*}
  for all $x_1,x_2, y, \xi, \eta \in \Rn$ with $x_1 \neq x_2$ and $a \in \mathscr{B}$.
\end{prop}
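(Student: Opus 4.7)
The variable $x$ in $a(x,\xi+\eta,x+y)$ appears in two slots (the first and the third), while $y$ and $\eta$ appear linearly in a single slot each. Hence by Leibniz's rule and the chain rule
\[
\p_x^{\alpha}\p_y^{\gamma}\p_{\eta}^{\beta}a(x,\xi+\eta,x+y)
= \sum_{\alpha_1+\alpha_2=\alpha}\binom{\alpha}{\alpha_1} b_{\alpha_1,\alpha_2}(x,\,x+y),
\]
where
\[
b_{\alpha_1,\alpha_2}(x,x'):=\bigl(\p_x^{\alpha_1}\p_{\xi}^{\beta}\p_{x'}^{\alpha_2+\gamma}a\bigr)(x,\xi+\eta,x').
\]
By the triangle inequality it therefore suffices to prove, for each admissible $(\alpha_1,\alpha_2)$ with $\alpha_1+\alpha_2=\alpha$, that
$|b_{\alpha_1,\alpha_2}(x_1,x_1+y)-b_{\alpha_1,\alpha_2}(x_2,x_2+y)|\le C\,|x_1-x_2|^s\,\<{\xi+\eta}^m$, uniformly in $\xi,\eta,y\in\Rn$ and $a\in\mathscr{B}$.

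To produce such an estimate I would insert the intermediate point $(x_2,x_1+y)$:
\[
b_{\alpha_1,\alpha_2}(x_1,x_1+y)-b_{\alpha_1,\alpha_2}(x_2,x_2+y)
= I_1+I_2,
\]
where $I_1:=b_{\alpha_1,\alpha_2}(x_1,x_1+y)-b_{\alpha_1,\alpha_2}(x_2,x_1+y)$ and $I_2:=b_{\alpha_1,\alpha_2}(x_2,x_1+y)-b_{\alpha_1,\alpha_2}(x_2,x_2+y)$. The term $I_1$ is a pure Hölder-$s$ difference in the first slot, at fixed second argument $\xi+\eta$ and fixed third argument $x_1+y$. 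Since $|\alpha_1|\le|\alpha|\le\tilde m$ and $|\beta|\le N$, the boundedness of $\mathscr{B}\subseteq C^{\tilde m,s}S^{m}_{0,0}(\RnRnRn;N)$ gives $|I_1|\le C|x_1-x_2|^s\<{\xi+\eta}^m$ with $C$ independent of $a\in\mathscr{B}$.

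For $I_2$ only the third (smooth) slot varies, by $x_1-x_2$. Because derivatives $\p_{x'}^{\alpha_2+\gamma+e_j}$ of any order in $x'$ remain $L^\infty$-bounded by $C\<{\xi+\eta}^m$ (the class has $m'=0$), I distinguish two regimes: if $|x_1-x_2|\le 1$, the mean value theorem applied coordinatewise yields $|I_2|\le C|x_1-x_2|\<{\xi+\eta}^m\le C|x_1-x_2|^s\<{\xi+\eta}^m$; if $|x_1-x_2|>1$, the uniform sup bound gives $|I_2|\le 2\|b_{\alpha_1,\alpha_2}(x_2,\cdot)\|_{L^{\infty}}\le C\<{\xi+\eta}^m\le C|x_1-x_2|^s\<{\xi+\eta}^m$. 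Summing the finitely many Leibniz contributions completes the proof. There is no genuine obstacle; the only thing to watch is that the Leibniz decomposition produces only multi-indices $\alpha_1$ with $|\alpha_1|\le|\alpha|\le\tilde m$, so that the Hölder-$s$ control in the first variable coming from the $C^{\tilde m,s}S^{m}_{0,0}$ seminorms is indeed applicable.
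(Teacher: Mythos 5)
Your proof is correct and follows the paper's argument: you insert the same intermediate point $(x_2,\xi+\eta,x_1+y)$, control the first-slot variation by the $C^{\tilde{m},s}$ seminorm of the class, and control the third-slot variation by the mean value theorem for close points together with the uniform sup bound for far points, which is exactly what the paper does via the fundamental theorem of calculus. Your explicit Leibniz decomposition of $\partial_{x}^{\alpha}$ acting on the composite $a(x,\xi+\eta,x+y)$ is a minor, correct elaboration that the paper leaves implicit; it is harmless because the third-slot (i.e.\ $x'$-) derivatives are unrestricted in the class $C^{\tilde{m},s}S^{m}_{0,0}(\RnRnRn;N)$, so every Leibniz term with $|\alpha_1|\le\tilde{m}$ and $\alpha_2+\gamma$ arbitrary is covered by the same two estimates.
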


\begin{proof}
  First of all we choose arbitrary $\alpha, \beta, \gamma \in \Non$ with $|\alpha| \leq \tilde{m}$ and $|\beta| \leq N$ and let $x_1, x_2 \in \Rn$. The boundedness of $\mathscr{B} $ in the set $C^{\tilde{m},s} S^{m}_{0,0}(\RnRnRn; N)$ implies 
  \begin{align}\label{p36}
    &\sup_{\substack{ x,\tilde{x} \in \Rn \\ x \neq \tilde{x} } } \left\{ \frac{ | \p_{x}^{\alpha} \p_y^{\gamma} \p_{\eta}^{\beta} a(x, \xi + \eta, x_1 + y) - \p_{ \tilde{x} }^{\alpha} \p_y^{\gamma} \p_{\eta}^{\beta} a( \tilde{x}, \xi + \eta, x_1 + y) | }{|x - \tilde{x}|^s}  \right\} \notag\\
    & \qquad \leq \| \p_y^{\gamma} \p_{\eta}^{\beta} a(x, \xi + \eta, x_1 + y) \|_{ C^{\tilde{m}, s} (\Rn_{x}) } \leq C_{\gamma,\beta} \<{\xi + \eta}^m
  \end{align}
  for all $\xi, \eta,y \in \Rn$ and all $a \in \mathscr{B}$.
  By means of the fundamental theorem of calculus for $|x_1-x_2| <1$ and on account of $|x_1-x_2|^s \geq 1$ for $|x_1-x_2| \geq 1$ we obtain due to the boundedness of $\mathscr{B} \subseteq  C^{\tilde{m},s} S^{m}_{0,0}(\RnRnRn; N)$ 
  \begin{align}\label{p37}
    \frac{ | \p_{x}^{\alpha} \p_y^{\gamma} \p_{\eta}^{\beta} a(x, \xi + \eta, x_1 + y) - \p_{x}^{\alpha} \p_y^{\gamma} \p_{\eta}^{\beta} a(x, \xi + \eta, x_2 + y) | }{|x_1 - x_2|^s}
    \leq C_{\beta, \gamma} \<{\xi + \eta}^m
  \end{align}
  for all $a \in \mathscr{B}$ and $x,\xi, \eta,x_1, x_2,y \in \Rn$, $x_1 \neq x_2$.
  Finally, the proposition follows from (\ref{p36}) and (\ref{p37})
  by means of the triangle inequality. 
\end{proof}

\begin{lemma}\label{lemma:AblVonALStetig}
  Let $N \in \N_0 \cup \{ \infty \}$  with $N > n$. Moreover, we define $\tilde{N}:= N-(n+1)$. For $a \in C^{ \tilde{m} ,s} S^m_{\rho,0}(\RnRnRn; N)$ with $\tilde{m} \in \N_0$, $m \in \R$, $0 \leq  \rho \leq 1$ and $0<s<1$, we define $a_L:\RnRn \rightarrow \C$ as in Lemma \ref{lemma:AbschVonaL}.
  Then $ \p_x^{\delta} \pa{\gamma} a_L \in C^{0}(\RnRn)$ for every $\gamma, \delta \in \Non$ with $|\delta| \leq \tilde{m}$ and $|\gamma| \leq \tilde{N}$.
\end{lemma}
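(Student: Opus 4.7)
My plan is to commute the derivatives $\p_x^\delta \p_\xi^\gamma$ with the oscillatory integral defining $a_L$, then rewrite the result as an absolutely convergent ordinary integral via the operators $A^{l_0}(D_\eta,y)$ and $A^l(D_y,\eta)$, and finally invoke Lebesgue's dominated convergence theorem.

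Lemma \ref{lemma:VertauscheAblNachXUndOsziInt} already allows one to pull $\p_x^\delta$ inside. For $\p_\xi^\gamma$ I would repeat the proof of that lemma, approximating $\p_{\xi_j}$ by difference quotients and applying Corollary \ref{kor:VertauschbarkeitVonOsziIntUndLimes}. A Leibniz/chain-rule expansion then yields
\[
\p_x^\delta \p_\xi^\gamma a_L(x,\xi) = \sum_{\delta_1+\delta_2=\delta}\binom{\delta}{\delta_1}\osint e^{-iy\cdot\eta}\,(\p_x^{\delta_1}\p_\xi^\gamma \p_{x'}^{\delta_2}a)(x,\xi+\eta,x+y)\,dy\,\dq\eta,
\]
where $\p_x, \p_\xi, \p_{x'}$ denote the partials in the three slots of $a=a(x,\xi,x')$.

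Next I would pick an even $l_0$ with $n<l_0\leq N-|\gamma|$, which is allowed by $|\gamma|\leq \tilde N = N-(n+1)$, and an even $l$ with $l>n+|m|$. For each fixed $(x,\xi)$ the function $(y,\eta)\mapsto (\p_x^{\delta_1}\p_\xi^\gamma \p_{x'}^{\delta_2}a)(x,\xi+\eta,x+y)$ lies in $\mathscr{A}^{m,N-|\gamma|}_0(\RnRnx{y}{\eta})$ thanks to the symbol estimates of $a$, so Theorem \ref{thm:OscillatoryIntegralGleichung} together with Proposition \ref{prop:OsziInt=Int} rewrites each summand as
\[
\iint e^{-iy\cdot\eta}\,A^{l_0}(D_\eta,y)\bigl[A^l(D_y,\eta)(\p_x^{\delta_1}\p_\xi^\gamma \p_{x'}^{\delta_2}a)(x,\xi+\eta,x+y)\bigr]\,dy\,\dq\eta,
\]
which is absolutely convergent.

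Finally, Lebesgue's theorem yields continuity in $(x,\xi)$ as long as (a) the integrand is continuous in $(x,\xi)$ for each $(y,\eta)$, and (b) it has a $(y,\eta)$-integrable majorant locally uniform in $(x,\xi)$. For (a) one notes that $A^{l_0}A^l$ generates finitely many terms of the form $(\p_x^{\delta_1}\p_\xi^{\gamma+\gamma'} \p_{x'}^{\delta_2+\beta'}a)(x,\xi+\eta,x+y)$ with $|\delta_1|\leq\tilde m$, $|\gamma+\gamma'|\leq N$, $\beta'\in\Non$, and these are jointly continuous in their three arguments by the defining properties of $C^{\tilde m,s}S^m_{0,0}(\R^{3n};N)$. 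For (b), Remark \ref{bem:Absch}, the symbol estimates of $a$ and Peetre's inequality $\<{\xi+\eta}^m\leq C\<{\xi}^m\<{\eta}^{|m|}$ yield the majorant $C(\xi)\<{\eta}^{|m|-l}\<{y}^{-l_0}\in L^1(\RnRnx{y}{\eta})$ with $C(\xi)$ locally bounded. The main obstacle, as I see it, is the bookkeeping of $\xi$-derivatives: after commuting $\p_\xi^\gamma$ inside, one must still have $l_0>n$ further $\xi$-derivatives of $a$ in reserve for $A^{l_0}(D_\eta,y)$, which is precisely the content of the assumption $|\gamma|\leq N-(n+1)$.
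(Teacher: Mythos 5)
Your proposal is correct and follows the same overall strategy as the paper: commute $\p_x^\delta\pa{\gamma}$ with the oscillatory integral (via Lemma \ref{lemma:VertauscheAblNachXUndOsziInt} and the Fubini/derivative theorem), and then deduce continuity from the amplitude estimates, using $|\gamma|\leq\tilde N$ precisely so that $N-|\gamma|>n$ derivatives in $\xi$ remain available for the regularization $A^{l_0}(D_\eta,y)$. The only real divergence is the final step: the paper applies Corollary \ref{kor:VertauschbarkeitVonOsziIntUndLimes} directly to the oscillatory integral, checking boundedness of the family $(\p_x^{\beta_1}\pa{\alpha}\p_y^{\beta_2}a)(x',\eta+\xi',x'+y)$ in $\mathscr{A}^{m,N-|\alpha|}_0(\RnRnx{y}{\eta})$ over $(x',\xi')$ near $(x,\xi)$ together with pointwise convergence of all its derivatives, while you first rewrite the oscillatory integral as an absolutely convergent ordinary integral via $A^{l_0}(D_\eta,y)$ and $\<{\eta}^{-2l}\<{D_y}^{2l}$ and then apply Lebesgue's theorem directly — exactly the device the paper uses in the adjacent Theorem \ref{thm:aLInCsSm00}. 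Both variants rest on the same estimates (Remark \ref{bem:Absch}, Peetre, the symbol bounds) and are sound; citing the Corollary is slightly shorter, whereas your route is more self-contained and makes the role of the lost $n+1$ $\xi$-derivatives explicit.
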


\begin{proof}
  Let $\alpha, \beta \in \Non$ with $|\beta| \leq \tilde{m}$ and $|\alpha| \leq \tilde{N}$. On account of Remark \ref{bem:AbleitungVonPDOWiederPDO} we know that
  $\p_x^{\beta} \pa{\alpha}  a  \in C^0(\RnRnRn)$. 
  With  $a(x, \xi + \eta, x+y) \in \mathscr{A}_0^{m^+, N}(\R^{2n}_{(y,y')} \times \R^{2n}_{(\xi,\eta)})$ at hand we are able to apply 
  Theorem \ref{thm:VertauschenVonOsziIntUndAbleitungen} and get together with Lemma \ref{lemma:VertauscheAblNachXUndOsziInt} for all $x,\xi \in \Rn$: 
  \begin{align} \label{eq5} 
    \p_x^{\beta} \pa{\alpha}  a_L  (x,\xi) 
    &= \p_x^{\beta} \osint e^{-iy \cdot \eta} \pa{\alpha} a (x, \eta + \xi, x+y) dy \dq \eta \notag \\
    &= \osint e^{-iy \cdot \eta} \p_x^{\beta} \{ \pa{\alpha} a (x, \eta + \xi, x+y)\} dy \dq \eta .
  \end{align}
  In order to show the continuity of $\p_x^{\beta} \pa{\alpha}  a_L $, we want to apply Corollary \ref{kor:VertauschbarkeitVonOsziIntUndLimes}. 
  To this end let $(x, \xi) \in \RnRn$ be arbitrary. Additionally let $(x', \xi') \in \RnRn$ with $|x-x'|, |\xi - \xi'| < 1 $. 
  For every $\beta_1, \beta_2, \gamma, \delta \in \Non$ with $\beta_1 + \beta_2 = \beta$ and $|\delta| \leq N-|\alpha|$ an application of $a \in C^{ \tilde{m} ,s} S^m_{\rho,0}(\RnRnRn; N)$  provides 
  \begin{align*}
    &|\p_y^{\gamma} \p_{\eta}^{\delta} (\p_x^{\beta_1} \pa{\alpha} \p_y^{\beta_2}a) (x', \eta + \xi', x'+y)| 
    \leq C_{\alpha, \beta, \gamma, \delta} \<{\eta + \xi' }^{ m - \rho (|\alpha| + |\delta|)} \\
    &\qquad \leq C_{\alpha, \beta, \gamma, \delta} \<{\eta}^{m} \<{ \xi' }^{ |m| }
     \leq C_{\alpha, \beta, \gamma, \delta} \<{\eta}^{m} \<{ \xi'- \xi }^{ |m| } \<{\xi}^{|m|}
    \leq C_{\alpha, \beta, \gamma, \delta} \<{\eta}^{m} \<{ \xi }^{ |m| }.
  \end{align*}
  Here $C_{\alpha, \beta, \gamma, \delta}$ is independent of $x', \eta, \xi', y \in \Rn$. This yields the boundedness of 
  \begin{align*}
    \{ (\p_x^{\beta_1} \pa{\alpha} \p_y^{\beta_2}a) (x', \eta + \xi', x'+y) : x', \xi' \in \Rn \text{ with } |x-x'|, |\xi - \xi'| < 1\} 
  \end{align*}
  in $\mathscr{A}_0^{m, N-|\alpha|}(\RnRnx{y}{\eta})$. Moreover, we obtain for all $y, \eta \in \Rn$ and for each $ \beta_1, \beta_2, \gamma, \delta  \in \Non$ with $\beta_1 + \beta_2 = \beta$ and $|\delta| \leq N-|\alpha|$:
  \begin{align*}
    \p_y^{\gamma} \p_{\eta}^{\delta} (\p_x^{\beta_1} \pa{\alpha} \p_y^{\beta_2}a) (x', \eta + \xi', x'+y) \xrightarrow[x' \rightarrow x]{\xi' \rightarrow \xi} 
    \p_y^{\gamma} \p_{\eta}^{\delta} (\p_x^{\beta_1} \pa{\alpha} \p_y^{\beta_2}a) (x, \eta + \xi, x+y)
  \end{align*}
  due to $a \in C^{ \tilde{m} ,s} S^m_{\rho,0}(\RnRnRn; N)$.
  Using Leibniz's rule and Corollary \ref{kor:VertauschbarkeitVonOsziIntUndLimes} yields 
  \begin{align*}
    &\lim_{\substack{\xi' \rightarrow \xi \\ x' \rightarrow x} } \osint e^{-iy \cdot \eta} \p_{x'}^{\beta} \{ \pa{\alpha} a (x', \eta + \xi', x'+y)\} dy \dq \eta\\
    & \qquad \qquad \qquad = \osint e^{-iy \cdot \eta} \p_x^{\beta} \{ \pa{\alpha} a (x, \eta + \xi, x+y)\} dy \dq \eta.
  \end{align*}
  Hence $\p_x^{\beta} \pa{\alpha}  a_L $ is continuous.
\end{proof}

Now we are in the position to show that $a_L$ is a non-smooth symbol.
Unfortunately we loose some smoothness with respect to $\xi$ of the double symbol:

\begin{thm}\label{thm:aLInCsSm00}
  Let $0 < s < 1$, $\tilde{m} \in \N_0$ and $m \in \R$. Additionally we choose $N \in \N_0 \cup \{ \infty \}$ with $N > n$. We define $\tilde{N}:= N-(n+1)$. Furthermore, let $\mathscr{B} \subseteq C^{\tilde{m}, s}_*S^{m}_{0,0}(\RnRnRn; N)$ be bounded. If we define for each $a \in \mathscr{B}$ the function $a_L: \RnRn \rightarrow \C$ as in Lemma \ref{lemma:AbschVonaL},
  we get $a_L \in  C^{\tilde{m},s}S^{m}_{0,0}(\RnRn; \tilde{N}) $ for all $a \in \mathscr{B}$ and the existence of a constant $C_{\beta}$, independent of $a \in \mathscr{B}$, such that
  \begin{align*}
    \| \pa{\beta} a_L(.,\xi) \|_{ C^{\tilde{m}, s} (\Rn) } \leq C_{\beta} \<{\xi}^m \qquad \text{for all } \xi \in \Rn \text{ and } \beta \in \Non \text{ with } |\beta| \leq \tilde{N}.
  \end{align*}
  This implies the boundedness of $\{ a_L: a \in \mathscr{B} \} \subseteq  C^{\tilde{m},s} S^{m}_{0,0}(\RnRn; \tilde{N}) $.
\end{thm}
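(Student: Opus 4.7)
My approach is to verify the two ingredients of the symbol class separately. The continuity requirement $\p_x^{\delta} \pa{\gamma} a_L \in C^0(\RnRn)$ for $|\delta| \leq \tilde{m}$, $|\gamma| \leq \tilde{N}$ has already been supplied by Lemma \ref{lemma:AblVonALStetig}. The pure sup-norm bound $|\p_x^{\delta} \pa{\gamma} a_L(x,\xi)| \leq C\<{\xi}^m$ reduces to Lemma \ref{lemma:AbschVonaL} once one observes that $\pa{\gamma}$ commutes with the oscillatory integral defining $a_L$ (via a difference-quotient argument together with Corollary \ref{kor:VertauschbarkeitVonOsziIntUndLimes}), so that $\pa{\gamma} a_L = (\pa{\gamma} a)_L$, and then applies Lemma \ref{lemma:AbschVonaL} to $\pa{\gamma} a \in C^{\tilde{m},s}_* S^m_{0,0}(\RnRnRn; N-|\gamma|)$ (which inherits uniform seminorm bounds from the boundedness of $\mathscr{B}$). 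Thus the genuinely new content is the uniform $C^{\tilde{m},s}(\Rn_x)$-Hölder estimate on $\pa{\beta} a_L(\cdot,\xi)$.

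For that estimate, fix $\beta \in \Non$ with $|\beta| \leq \tilde{N} = N-(n+1)$ and pick $l_0 := n+1$, so that $l_0 > n$ and $l_0 \leq N - |\beta|$. Since $\pa{\beta} a(x, \cdot + \xi, x + \cdot) \in \mathscr{A}^{m,N-|\beta|}_0(\RnRn)$ for each fixed $(x,\xi)$, Theorem \ref{thm:OscillatoryIntegralGleichung} combined with Proposition \ref{prop:OsziInt=Int} (cf.\ Remark \ref{bem:IntAbschatzung}) yields the absolutely convergent representation
\[
\pa{\beta} a_L(x,\xi) = \iint e^{-iy \cdot \eta}\, A^{l_0}(D_{\eta},y)\, \pa{\beta} a(x, \eta+\xi, x+y)\, dy \dq \eta.
\]
Setting $r(x,\xi,\eta,y) := A^{l_0}(D_{\eta},y)\pa{\beta} a(x,\eta+\xi,x+y)$, this identity is exactly the setting of Proposition \ref{prop:HilfslemmaIntAbschatzung} with $X := C^{\tilde{m},s}(\Rn)$ (whose norm is taken in the $x$-variable and which embeds continuously into $L^{\infty}$).

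It remains to verify the hypothesis $\|\<{D_y}^{2l} r(\cdot,\xi,\eta,y)\|_{C^{\tilde{m},s}(\Rn_x)} \leq C_l \<{y}^{-l_0} \<{\xi+\eta}^m$ of Proposition \ref{prop:HilfslemmaIntAbschatzung}. Unfolding $A^{l_0}(D_{\eta},y)$, the quantity $r$ is a finite linear combination of terms of the form $c(y)\,\pa{\beta+\gamma} a(x,\eta+\xi,x+y)$ with $|\gamma| \leq l_0$, where each $c(y)$ is $\<{y}^{-l_0}$ or of the shape $\<{y}^{-l_0}\, y_j/\<{y}$ and hence has all $y$-derivatives of order $\<{y}^{-l_0}$. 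Applying $\<{D_y}^{2l}$ and the Leibniz rule, and using that $\p_y$ on the third argument of $a$ coincides with $\p_{x'}$, we express $\<{D_y}^{2l} r$ as a finite sum of terms $\tilde{c}(y)\, \p_{x'}^{\delta} \pa{\beta+\gamma} a(x,\eta+\xi,x+y)$ with the same $\<{y}^{-l_0}$-decay. The sup-norm contribution to the $C^{\tilde{m},s}$-estimate is controlled by the boundedness of $\mathscr{B} \subseteq C^{\tilde{m},s}_* S^m_{0,0}(\RnRnRn; N)$, and the Hölder seminorm contribution in $x$ is supplied directly by Proposition \ref{prop:fürAbschVonHoelderNorm}. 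Proposition \ref{prop:HilfslemmaIntAbschatzung} then delivers $\|\pa{\beta} a_L(\cdot,\xi)\|_{C^{\tilde{m},s}} \leq C_\beta \<{\xi}^m$ uniformly over $\mathscr{B}$, and assembling this with the continuity and sup-norm information from the first paragraph yields the desired boundedness of $\{a_L : a \in \mathscr{B}\}$ in $C^{\tilde{m},s} S^m_{0,0}(\RnRn; \tilde{N})$. The main obstacle is the bookkeeping in this last step: the variable $x$ appears twice in $a(x,\eta+\xi,x+y)$, so the chain rule mixes $\p_x$ and $\p_{x'}$ acting on $a$, and it is precisely this mixing that makes Proposition \ref{prop:fürAbschVonHoelderNorm} indispensable.
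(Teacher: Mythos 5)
Your proof is correct and follows essentially the same route as the paper's: continuity from Lemma \ref{lemma:AblVonALStetig}, the reduction $\pa{\gamma}a_L = (\pa{\gamma}a)_L$ using the boundedness of $\{\pa{\gamma}a : a \in \mathscr{B}\}$, the same absolutely convergent integral representation from Theorem \ref{thm:OscillatoryIntegralGleichung} together with Proposition \ref{prop:OsziInt=Int}, and the Hölder seminorm supplied by Proposition \ref{prop:fürAbschVonHoelderNorm}. The one noteworthy streamlining is your choice $X = C^{\tilde{m},s}(\Rn)$ in Proposition \ref{prop:HilfslemmaIntAbschatzung}: the paper instead estimates the Hölder difference quotient by hand under the integral and obtains the sup-norm separately from Lemma \ref{lemma:AbschVonaL} (which itself uses that proposition with $X = C^0_b$), whereas you bundle both into a single invocation with the stronger target space. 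The arithmetic being checked is identical, so this is a tidier packaging rather than a genuinely different argument; it does, however, make the logical dependence on Propositions \ref{prop:HilfslemmaIntAbschatzung} and \ref{prop:fürAbschVonHoelderNorm} more transparent.
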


\begin{proof}
  Due to Lemma \ref{lemma:AblVonALStetig} we have $\p_x^{\delta} \pa{\gamma} a_L \in C^{0}(\RnRn)$ for every $\gamma, \delta \in \Non$ with $|\gamma| \leq \tilde{N}$ and $|\delta| \leq \tilde{m}$. Since  $a(x, \xi + \eta, x+y)$ is an element of $ \mathscr{A}_0^{m^+, N}(\R^{2n}_{(y,y')} \times \R^{2n}_{(\xi,\eta)})$ and $N-|\alpha| > n$, we derive from Theorem \ref{thm:VertauschenVonOsziIntUndAbleitungen} for each $\alpha \in \Non$ with $|\alpha| \leq \tilde{N}$:
  \begin{align*}
    \pa{\alpha} a_L(x,\xi) = \osint e^{-iy \cdot \eta} \pa{\alpha} a(x, \eta + \xi, x+y) dy \dq \eta \qquad \text{for all } x, \xi \in \Rn.
  \end{align*}
  Let $\alpha \in \Non$ with $|\alpha| \leq \tilde{N}$. Remark \ref{bem:AbleitungVonPDOWiederPDO} and the boundedness of $\mathscr{B}$ implies the boundedness of 
  $$\mathscr{\tilde{B}} := \left\{ \pa{\alpha} a : a \in \mathscr{B} \right\} \subseteq C^{\tilde{m},s} S^{m}_{0, 0}(\RnRn \times \Rn, N-|\alpha|).$$
  Hence it remains to show
  \begin{align}\label{p33}
    \| a_L(.,\xi) \|_{C^{\tilde{m},s}(\Rn)} \leq C \<{\xi}^m \quad \text{for all }  \xi \in \Rn, a \in \mathscr{\tilde{B}}.
  \end{align}
  Inequality (\ref{p33}) implies $\| \pa{\alpha} a_L(.,\xi) \|_{C^{\tilde{m},s}(\Rn)} \leq C_{\alpha} \<{\xi}^m$
 for all $\xi \in \Rn$ and $a \in \mathscr{B}$. This yields the boundedness of $\{ a_L : a \in \mathscr{B} \} \subseteq C^{\tilde{m},s} S^m_{0,0}(\RnRnRn; \tilde{N} )$.
  Now we choose $l \in \N_0$ with $-2l+|m|<-n$ and $l_0:= N-\tilde{N}$. An application of Lemma \ref{lemma:VertauscheAblNachXUndOsziInt} and Theorem \ref{thm:ExistenceOfOscillatoryIntegral} provides for every $\delta \in \Non$ with $|\delta| \leq \tilde{m}$:
  \begin{align}\label{p34}
    &\p_x^{\delta} a_L(x,\xi) = \osint e^{-iy \cdot \eta} \p_x^{\delta} \left\{ a(x, \eta + \xi, x+y) \right\} dy \dq \eta \notag \\
    &\quad = \iint e^{-iy \cdot \eta} \<{\eta}^{-2l} \<{ D_{y} }^{2l} \left\{ A^{l_0}(D_{\eta},y) \p_x^{\delta} [ a(x, \xi + \eta, x + y ) ] \right\} dy \dq \eta.
  \end{align}
  On account Proposition \ref{prop:fürAbschVonHoelderNorm} and $\left| \p_y^{\alpha_1} \frac{y_j}{\<{y}} \right| \leq 1$ for all $j \in \{ 1,\ldots, n\}$ and $\alpha_1 \in \Non$ we obtain similary to the proof of Remark \ref{bem:Absch} for
  $\delta \in \Non$ with $|\delta| \leq \tilde{m}$:
  \begin{align*}
    &\left| \<{\eta}^{-2l} \<{D_y}^{2l} A^{l_0}(D_{\eta}, y) \left\{ \frac{ \p_{x_1}^{\delta} a(x_1, \xi + \eta, x_1 + y) - \p_{x_2}^{\delta} a(x_2, \xi + \eta, x_2 + y) }{(x_1-x_2)^s} \right\} \right| \\
    &\qquad 
    \leq C \<{y}^{-l_0} \<{\xi}^m \<{\eta}^{-2l+|m|}
  \end{align*}
  for all $x_1, x_2, y, \xi, \eta \in \Rn$ with $x_1 \neq x_2$ and all $a \in \mathscr{\tilde{B}}$.  
  Consequently we have for each $\delta \in \Non$ with $|\delta| \leq \tilde{m}$:
  \begin{align*}
    \frac{| \p_x^{\delta} a_L(x_1,\xi) - \p_x^{\delta} a_L(x_2,\xi) |}{|x_1 - x_2|^s} 
    \leq \iint C \<{y}^{-l_0} \<{\xi}^m \<{\eta}^{-2l+|m|} dy \dq \eta \leq C \<{\xi}^m
  \end{align*}
  for all $x_1,x_2, \xi \in \Rn$ with $x_1 \neq x_2$ and $a \in \mathscr{\tilde{B}}$.
  Finally, we only have to use the previous inequality and Lemma \ref{lemma:AbschVonaL} to get (\ref{p33}).
\end{proof}

We still need to show $a_L(x,D_x)=a(x,D_x,x')$. For this we need:

\begin{prop}\label{prop:PartIntFubiniLebesgueFuerReduktion}
  Let $\tilde{m} \in \N_0$, $m \in \R$, $0<s<1$ and $\chi \in \snn$. Additionally let $N \in \N_0 \cup \{ \infty \}$ with $N > n$. Moreover, we choose $l,l_0,l_0' \in \N_0$ with
  \begin{align*}
      -2l + m < -n,   \qquad   -2l_0  <-n, \qquad -2l_0' + 2l <-n.
  \end{align*}
  Assuming $0< \e' < 1$, $a \in C^{ \tilde{m} ,s} S^m_{0,0}(\RnRnRn; N)$ and $u \in \s$ we define for every $0 < \e < 1$ the functions $a_0, \hat{a}, a_{\e}, \tilde{a}_{0}: \R^{5n} \rightarrow \C$ by
  \begin{align*}
    a_0 (x,x', x'', \xi, \xi') &:= \chi(\e' x'', \e' \xi') a(x,\xi, x') u(x'')\\
    \hat{a}(x,x', x'', \xi, \xi') &:= \<{x'-x''}^{-2l_0} \<{ D_{\xi'} }^{2l_0} \left[ \<{\xi'}^{-2l_0'} \<{D_{x''} }^{2l_0'} a_0 (x,x', x'', \xi, \xi') \right], \\
    a_{\e}(x,x', x'', \xi, \xi') &:= \chi(\e x', \e \xi) \hat{a}(x,x', x'', \xi, \xi'), \\
    \tilde{a}_0 (x,x', x'', \xi, \xi') &:= \<{- \xi + \xi'}^{-2l} \<{D_{x'} }^{2l} \hat{a} (x,x', x'', \xi, \xi').
  \end{align*}
	for all $x,x', x'', \xi, \xi' \in \Rn$. Then 
  \begin{align*}
    &\lim_{\e \rightarrow 0} \iiiint e^{-ix'' \cdot \xi'} e^{-ix'\cdot \xi + ix'\cdot \xi' + ix\cdot \xi}  a_{\e} (x,x', x'', \xi, \xi') d x'' \dq \xi' dx' \dq \xi  \\
    &\qquad = \iiiint e^{-ix'' \cdot \xi'} e^{-ix'\cdot \xi + ix'\cdot \xi' + ix\cdot \xi}  \tilde{a}_{0} (x,x', x'', \xi, \xi') d x'' \dq \xi' dx' \dq \xi. 
  \end{align*}

\end{prop}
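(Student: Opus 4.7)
The plan is to verify the identity by integrating by parts in $x'$ to convert the $a_\e$ integrand into one involving $\tilde a_0$ up to an $\e$-dependent cutoff, and then to pass to the limit $\e \to 0$ via Lebesgue's dominated convergence theorem.

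First I would use the exponential identity
\begin{align*}
e^{-ix'\cdot\xi + ix'\cdot\xi'} = \<{-\xi+\xi'}^{-2l}\<{D_{x'}}^{2l} e^{-ix'\cdot\xi + ix'\cdot\xi'}
\end{align*}
and transpose $\<{D_{x'}}^{2l}$ onto $a_\e$. For each fixed $\e \in (0,1)$ the factor $\chi(\e x', \e\xi)$ in $a_\e$ is Schwartz in $(x',\xi)$, which together with the built-in weights $\<{x'-x''}^{-2l_0}$, $\<{\xi'}^{-2l_0'}$ and the Schwartz function $u$ makes $a_\e$ and all its $x'$-derivatives decay rapidly enough in all variables to justify integration by parts without boundary terms. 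Writing $\Phi$ for the full phase $-x''\cdot\xi'-x'\cdot\xi+x'\cdot\xi'+x\cdot\xi$, this yields
\begin{align*}
\iiiint e^{i\Phi} a_\e\, dx''\,\dq\xi'\, dx'\,\dq\xi = \iiiint e^{i\Phi} \<{-\xi+\xi'}^{-2l}\<{D_{x'}}^{2l} a_\e\, dx''\,\dq\xi'\, dx'\,\dq\xi.
\end{align*}

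Next I would apply the Leibniz rule to $\<{D_{x'}}^{2l}[\chi(\e x',\e\xi)\hat a]$, expanding it as a finite sum $\sum c_{\alpha,\beta}\,\e^{|\alpha|}(\p^{\alpha}_{x'}\chi)(\e x',\e\xi)\,\p^{\beta}_{x'}\hat a$ over multi-indices with $|\alpha|+|\beta|\leq 2l$. As $\e\to 0$ the $\alpha = 0$ summand converges pointwise to $\chi(0,0)\<{D_{x'}}^{2l}\hat a = \<{D_{x'}}^{2l}\hat a$, while every term with $|\alpha|\geq 1$ carries a prefactor $\e^{|\alpha|}$ and therefore vanishes pointwise. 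Consequently $\<{-\xi+\xi'}^{-2l}\<{D_{x'}}^{2l} a_\e \to \tilde a_0$ pointwise on $\R^{5n}$.

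The main task, and the main obstacle, is to produce a uniform $L^1(dx''\,\dq\xi'\,dx'\,\dq\xi)$ majorant for the integrand $e^{i\Phi}\<{-\xi+\xi'}^{-2l}\<{D_{x'}}^{2l}a_\e$. Distributing derivatives via Leibniz through the three differential operators defining $\hat a$, and using the symbol estimate $|\p^{\beta}_{x'}\pa{\gamma}a(x,\xi,x')|\leq C_{\beta,\gamma}\<{\xi}^m$ coming from $a \in C^{\tilde m,s}S^m_{0,0}$, the Schwartz decay of $u$ in $x''$ (which absorbs the polynomial growth produced when $\<{D_{\xi'}}^{2l_0}$ hits $\chi(\e' x'',\e'\xi')$, since $\e'$ is fixed), and the uniform Schwartz bounds on $\chi$ and its derivatives (valid uniformly in $\e \in (0,1)$ since $|\p^{\alpha}\chi(\e\,\cdot,\e\,\cdot)|\leq \|\p^{\alpha}\chi\|_\infty$), one obtains, for every $K\in\N$, a bound of the form
\begin{align*}
|\<{D_{x'}}^{2l} a_\e(x,x',x'',\xi,\xi')| \leq C_K\,\<{\xi}^m\,\<{\xi'}^{-2l_0'+2l}\,\<{x'-x''}^{-2l_0}\,\<{x''}^{-K},
\end{align*}
uniformly in $\e \in (0,1)$. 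Multiplying by $\<{-\xi+\xi'}^{-2l}$ and invoking Peetre's inequality to migrate $\<{\xi}^m$ past $\<{-\xi+\xi'}^{-2l}$ gives the majorant
\begin{align*}
C_K\,\<{-\xi+\xi'}^{-2l+|m|}\,\<{\xi'}^{-2l_0'+2l+m}\,\<{x'-x''}^{-2l_0}\,\<{x''}^{-K},
\end{align*}
whose separate integrability in $x'$, $x''$, $\xi$ and $\xi'$ is precisely what the three hypotheses $-2l+m<-n$, $-2l_0<-n$ and $-2l_0'+2l<-n$ (combined with $K$ large enough to swallow any shift by $|m|$) were designed to furnish. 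With this majorant in hand, dominated convergence combined with the pointwise limit from the Leibniz step yields the claimed identity. The delicate point is arranging Peetre's inequality so that the three integrability conditions are simultaneously satisfied, but this is exactly the standard bookkeeping already carried out in Lemma \ref{lemma:SymbolReduction} and Lemma \ref{lemma:AbschatzungDoppelSymbol}.
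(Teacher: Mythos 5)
Your approach is essentially the same as the paper's: multiply and divide the exponential by $\<{-\xi+\xi'}^{-2l}$, integrate by parts in $x'$ (the paper sandwiches this between two applications of Fubini to reorder the iterated integral, but this is optional), deduce the pointwise limit $\<{-\xi+\xi'}^{-2l}\<{D_{x'}}^{2l}a_\e \to \tilde a_0$ from the Leibniz rule and $\chi_\e \to 1$, establish a uniform $L^1$ majorant, and apply dominated convergence. So the skeleton is correct and identical.

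However, the concrete majorant you write down does not work for general $m$. You apply Peetre's inequality to migrate $\<{\xi}^m$ across $\<{-\xi+\xi'}^{-2l}$, producing
\begin{align*}
\<{-\xi+\xi'}^{-2l+|m|}\,\<{\xi'}^{-2l_0'+2l+m},
\end{align*}
but the hypothesis $-2l+m<-n$ does not give $-2l+|m|<-n$ when $m<0$, so the $\xi$-integral diverges; and the hypothesis $-2l_0'+2l<-n$ does not give $-2l_0'+2l+m<-n$ when $m>0$, so the $\xi'$-integral diverges. Your aside that ``$K$ large enough can swallow any shift by $|m|$'' cannot rescue this, because $K$ only appears on $\<{x''}^{-K}$ and has no effect on the $\xi$- or $\xi'$-integrals. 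The correct split (used both in Lemma \ref{lemma:SymbolReduction} via (\ref{p20}) and in the paper's proof of this proposition via (\ref{p47})) is to leave $\<{\xi}^m$ attached to $\xi$ and instead apply Peetre's inequality to $\<{-\xi+\xi'}^{-2l}\leq 2^{2l}\<{\xi}^{-2l}\<{\xi'}^{2l}$, giving the majorant $\<{\xi}^{m-2l}\<{\xi'}^{-2l_0'+2l}\<{x'}^{-2l_0}\<{x''}^{2l_0-M}$, whose integrability is exactly what the three hypotheses were designed to provide. With that repair the proof goes through as you describe.
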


\begin{proof}
  First of all we define for each $0 < \e < 1$ the function $\tilde{a}_{\e}: \R^{5n} \rightarrow \C$ by
  \begin{align*}
  	\tilde{a}_{\e} (x,x', x'', \xi, \xi') &:=  \<{- \xi + \xi'}^{-2l} \<{D_{x'} }^{2l} a_{\e}(x,x', x'', \xi, \xi') 
  \end{align*}
  for all $x,x', x'', \xi, \xi' \in \Rn$. By means of the Leibniz rule, $ a \in C^{ \tilde{m} ,s} S^m_{0,0}(\RnRnRn; N)$, $u \in \s$ and  $\chi \in \snn$ one can show for a fixed $x \in \Rn$ and for arbitrary $M, M_1, M_2 \in \N_0$ with $-2l_0 - M_1 < -n$ and $m-M_2 < -n$: 
  \begin{align}
    \left| a_{\e} (x,x', x'', \xi, \xi') \right| &\leq C |\chi(\e x', \e \xi)| \<{x'-x''}^{-2l_0} \<{\xi'}^{-2l_0'} \<{\xi}^{m}\<{x''}^{-M} \notag \\
    &\leq C_{\e} \<{x'}^{-2l_0-M_1} \<{\xi}^{m-M_2} \<{x''}^{2l_0-M} \<{\xi'}^{-2l_0'}, \label{p46} \\
    \left|  \tilde{a}_{\e} (x,x', x'', \xi, \xi') \right| &\leq C \<{-\xi + \xi'}^{-2l} \<{x'-x''}^{-2l_0} \<{\xi'}^{-2l_0'} \<{\xi}^{m} \<{x''}^{-M} \notag \\
    &\leq C \<{x'}^{-2l_0} \<{x''}^{2l_0-M} \<{\xi'}^{-2l_0' + 2l} \<{\xi}^{-2l + m}, \label{p47}\\
    \left|  \tilde{a}_{0} (x,x', x'', \xi, \xi') \right| &\leq C \<{-\xi + \xi'}^{-2l} \<{x'-x''}^{-2l_0} \<{\xi'}^{-2l_0'} \<{\xi}^{m} \<{x''}^{-M} \notag \\
    &\leq C \<{x'}^{-2l_0} \<{x''}^{2l_0-M} \<{\xi'}^{-2l_0' + 2l} \<{\xi}^{-2l + m}, \label{p51}
  \end{align}
  where $C$ is independent of $x,x', x'', \xi, \xi' \in \Rn$ and of $0 < \e < 1$. 
  Now we choose $M \in \N$ with $2l_0-M <-n$. Then we have 
  $a_{\e} (x,x', x'', \xi, \xi') \in L^1(\RnRnx{x''}{\xi'} \times \RnRnx{x'}{\xi})$ and $ \tilde{a}_{\e} (x,x', x'', \xi, \xi')  \in L^1(\RnRnx{x'}{\xi})$ for every fixed $x, x'', \xi' \in \Rn$ and $0 < \e < 1$.  Hence we are able to use Fubini's theorem first and integrate by parts with respect to $x'$ and $\xi$ afterwards and get
  \begin{align}\label{eq16}
    &\iiiint e^{-ix'' \cdot \xi'} e^{-ix'\cdot \xi + ix'\cdot \xi' + ix\cdot \xi}  a_{\e} (x,x', x'', \xi, \xi') d x'' \dq \xi' dx' \dq \xi \notag \\
    &\qquad = \iiiint e^{-ix'' \cdot \xi'} e^{-ix'\cdot \xi + ix'\cdot \xi' + ix\cdot \xi}  a_{\e} (x,x', x'', \xi, \xi') dx' \dq \xi d x'' \dq \xi' \notag\\
    &\qquad = \iiiint e^{-ix'' \cdot \xi'} e^{-ix'\cdot \xi + ix'\cdot \xi' + ix\cdot \xi}  \tilde{a}_{\e} (x,x', x'', \xi, \xi') dx' \dq \xi d x'' \dq \xi'.
  \end{align}
   Because of (\ref{p47}) we have 
   for every $x \in \Rn$ 
  \begin{align}\label{p49}
  	&|e^{-ix'' \cdot \xi'-ix'\cdot \xi + ix'\cdot \xi' + ix\cdot \xi}  \tilde{a}_{\e} (x,x', x'', \xi, \xi')| \notag \\
	&\qquad \qquad \qquad \qquad \qquad \leq C_x \<{x'}^{-2l_0} \<{x''}^{2l_0-M} \<{\xi'}^{-2l_0' + 2l} \<{\xi}^{-2l + m} \notag\\
  	&\qquad \qquad \qquad \qquad \qquad \in L^1(\RnRnx{x'}{\xi} \times \RnRnx{x''}{\xi'} ),
  \end{align}
  where $C_x$ is independent of $x', x'', \xi, \xi' \in \Rn$ and of $0 < \e < 1$. Making use of Fubini's theorem in (\ref{eq16}) provides:
  \begin{align}\label{p50}
  	&\iiiint e^{-ix'' \cdot \xi'} e^{-ix'\cdot \xi + ix'\cdot \xi' + ix\cdot \xi}  a_{\e} (x,x', x'', \xi, \xi') d x'' \dq \xi' dx' \dq \xi \notag\\
    &\qquad = \iiiint e^{-ix'' \cdot \xi'} e^{-ix'\cdot \xi + ix'\cdot \xi' + ix\cdot \xi}  \tilde{a}_{\e} (x,x', x'', \xi, \xi') d x'' \dq \xi' dx' \dq \xi .
  \end{align}
  It remains to calculate the limit $\e \rightarrow 0$. Due to (\ref{p51}) we know that the function
  $e^{-ix'' \cdot \xi'} e^{-ix'\cdot \xi + ix'\cdot \xi' + ix\cdot \xi} \tilde{a}_0 (x,x', x'', \xi, \xi')$ is an element of $L^1(\RnRnx{x'}{\xi} \times \RnRnx{x''}{\xi'} )$. Using the definition of $\<{ D_{x'} }^{2l}$ and the Leibniz rule, one easily obtains by the pointwise convergence of $\chi_{\e}$ to $1$ and the pointwise convergence of every derivative of $\chi_{\e}$ to $0$, see e.g. \cite[Lemma 6.3]{KumanoGo}:
  \begin{align*}
  	e^{-ix'' \cdot \xi'} e^{-ix'\cdot \xi + ix'\cdot \xi' + ix\cdot \xi} \tilde{a}_{\e} (x,x', x'', \xi, \xi') \rightarrow
  	e^{-ix'' \cdot \xi'} e^{-ix'\cdot \xi + ix'\cdot \xi' + ix\cdot \xi} \tilde{a}_0 (x,x', x'', \xi, \xi')
  \end{align*} 
  for all $x,x', x'', \xi, \xi' \in \Rn$ if $\e \rightarrow 0$. 
  We conclude the claim by applying Lebesgue's theorem to (\ref{p50}) which is possible due to the previous convergence and (\ref{p49}).
\end{proof}

Combining the previous results we obtain:

\begin{thm}\label{thm:SymbolReduktionNichtGlatt}
  Let $N \in \N_0 \cup \{ \infty \}$ with $N > n$. We define $\tilde{N}:= N-(n+1)$. Assuming an $a \in C^{ \tilde{m} ,s} S^m_{0,0}(\RnRnRn; N)$ with $\tilde{m} \in \N_0$, $m \in \R$ and $0<s<1$, we define $a_L:\RnRn \rightarrow \C$ by
  \begin{align*}
    a_L(x,\xi) := \osint e^{-iy \cdot \eta} a(x, \eta + \xi, x+y) dy \dq \eta \in C^{\tilde{m}, s} S^m_{0,0} (\RnRnx{x}{\xi}; \tilde{N})
  \end{align*}
  for all $x, \xi \in \Rn$. Then we have for every $u \in \s$
  \begin{align*}
    a(x,D_x, x') u = a_L(x,D_x) u.
  \end{align*}
\end{thm}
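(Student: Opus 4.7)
The plan is to compute $a_L(x,D_x)u(x)$ directly and massage it into the integral representation of $a(x,D_x,x')u(x)$ given by Lemma \ref{lemma:PseudosMitDoppelsymbol}. Since $a_L \in C^{\tilde{m},s} S^m_{0,0}(\RnRn;\tilde{N})$ by Theorem \ref{thm:aLInCsSm00}, the operator $a_L(x,D_x)$ acts on $\s$ via the usual formula, and we may write
\begin{align*}
  a_L(x,D_x)u(x)
  = \intr e^{ix\cdot\eta}\,\hat u(\eta)\left[\osint e^{-iy\cdot\zeta} a(x,\zeta+\eta,x+y)\,dy\,\dq\zeta\right]\dq\eta.
\end{align*}
First I would apply Theorem \ref{thm:VariableTransformationOfOsiInt} to shift $\zeta \mapsto \xi - \eta$ inside the inner oscillatory integral, so that the bracketed factor becomes $\osint e^{-iy\cdot(\xi-\eta)} a(x,\xi,x+y)dy\,\dq\xi$.

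The crucial step is then to interchange the proper $\eta$-integration (involving $\hat u$) with the oscillatory $(y,\xi)$-integration. Since $\hat u \in \s$ has no compact support, this cannot be done by direct Fubini; instead I would insert a cutoff $\chi(\e\cdot) \in \s$ in the $(y,\xi)$ variables, integrate by parts using the $\<{D_y}^{2l}$ and $A^{l_0}(D_\xi,y)$ symmetrizers from Remarks \ref{bem:eFunktion} and \ref{bem:Absch} to produce an integrand that is absolutely integrable uniformly in $\e$, apply classical Fubini, and then pass to the limit $\e\to 0$ by dominated convergence. This is exactly the pattern codified in Proposition \ref{prop:PartIntFubiniLebesgueFuerReduktion} and underlying the existence and manipulation results for oscillatory integrals in Section \ref{section:ExtensionSpaceOfAmplitudes}; the decay needed in $(y,\eta,\xi)$ is afforded by the hypothesis $N>n$ (giving enough $\xi$-differentiations) and $u\in\s$ (giving Schwartz decay in $\eta$).

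After the interchange the expression becomes
\begin{align*}
  a_L(x,D_x)u(x) = \osint e^{-iy\cdot\xi} a(x,\xi,x+y)\left[\intr e^{i(x+y)\cdot\eta}\hat u(\eta)\dq\eta\right]dy\,\dq\xi,
\end{align*}
where the inner integral is the ordinary Fourier inversion, giving $u(x+y)$. A final change of variables $y\mapsto y-x$, justified again by Theorem \ref{thm:VariableTransformationOfOsiInt}, produces
\begin{align*}
  a_L(x,D_x)u(x) = \osint e^{i(x-y)\cdot\xi} a(x,\xi,y) u(y)\,dy\,\dq\xi,
\end{align*}
which by Lemma \ref{lemma:PseudosMitDoppelsymbol} is precisely $a(x,D_x,x')u(x)$.

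The main obstacle is the Fubini-type interchange in the middle step: it is where non-smoothness of $a$ in $x$ interacts with the oscillatory structure and with the fact that $\hat u$ is only Schwartz. I expect the argument will require choosing the cutoff $\chi$, the integration-by-parts orders $l,l_0,l_0'$, and the parameter $\tilde{N}=N-(n+1)$ carefully so that all intermediate integrands admit an $\e$-independent $L^1$-majorant in all variables simultaneously; here the hypothesis $N>n$ is tight, because after using $n+1$ of the available $\xi$-derivatives to gain decay in $y$ we are left with exactly $\tilde{N}$ derivatives, matching the symbol class into which $a_L$ lands by Theorem \ref{thm:aLInCsSm00}.
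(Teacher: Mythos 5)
Your argument reaches the same conclusion by a genuinely different route. The paper starts from the iterated oscillatory-integral representation $a_L(x,D_x)u(x) = \osint e^{i(x-x'')\cdot\xi'}\osint e^{-iy \cdot \eta}a(x,\eta+\xi',x+y)u(x'')\,dy\,\dq\eta\,dx''\,\dq\xi'$, shifts $(y,\eta)\mapsto(x'-x,\xi-\xi')$ in the inner integral, unwinds both oscillatory integrals with nested cutoffs $\chi(\e'\cdot)$, $\chi(\e\cdot)$ (Proposition \ref{prop:PartIntFubiniLebesgueFuerReduktion} handling the inner $\e$-limit), and then matches the resulting four-fold Lebesgue integral against the representation of $a(x,D_x,x')u$ given by Lemma \ref{lemma:SymbolReduction}. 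You instead start from the proper Lebesgue integral $\int e^{ix\cdot\eta}a_L(x,\eta)\hat u(\eta)\,\dq\eta$, shift only the $\zeta$-variable, interchange the outer $\eta$-integral with the oscillatory $(y,\xi)$-integral, and then invoke Lemma \ref{lemma:PseudosMitDoppelsymbol} rather than Lemma \ref{lemma:SymbolReduction}. That is conceptually leaner (fewer integration variables, one cutoff level instead of two) and reuses a representation the paper already has. Two cautions: (a) the shift $\zeta\mapsto\xi-\eta$ is \emph{not} literally Theorem \ref{thm:VariableTransformationOfOsiInt} as written; you must first re-read the shifted phase $e^{-iy\cdot(\xi-\eta)}$ as the canonical $e^{-iy\cdot\xi}$ times an amplitude factor $e^{iy\cdot\eta}$, after which the theorem (with $y_0=0$, $\eta_0=-\eta$) applies, and this extra factor is exactly what later produces $\langle\eta\rangle^l$-growth under $A^l(D_y,\xi)$ in the integration by parts. (b) The Fubini interchange you identify as the main obstacle is \emph{not} the one codified in Proposition \ref{prop:PartIntFubiniLebesgueFuerReduktion}, which is tailored to the paper's two-level cutoff on a four-fold integral; you would need to run the cutoff/parts/dominated-convergence argument yourself, tracking the $\langle\eta\rangle^l$-growth and absorbing it against the Schwartz decay of $\hat u$. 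This is feasible under the stated hypotheses ($N>n$ supplies the $\xi$-derivatives needed for $\langle y\rangle^{-l'}$ decay with $n<l'\le N$), and indeed shorter than the paper's chain, but it is a new technical lemma, not a quotation of an existing one.
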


\begin{proof}
  We already know that $a_L \in C^{\tilde{m}, s} S^m_{0,0} (\RnRn; \tilde{N})$ due to Theorem \ref{thm:aLInCsSm00}. Now we choose $u \in \s$ and $l,l_0,l_0' \in \Non$ with the property
\begin{align}
    \begin{array}{ccccc}
      -2l+ m < -n, \quad & \quad -2l_0 <-n, \quad &  \quad & \quad -2l_0' + 2l <-n.
    \end{array}
  \end{align}
  On account of $a_L(x,\xi')u(x'') \in \mathscr{A}^{m, \tilde{N}}_{-k}(\RnRnx{x''}{\xi'})$ for all $k \in \N_0$, Theorem \ref{thm:ExistenceOfOscillatoryIntegral} yields the existence of $a_L(x,D_x)u$. Because of $a(x, ,\eta + \xi', x+y) \in  \mathscr{A}^{m, N}_{0}(\RnRnx{y}{\eta})$ for every fixed $x, \xi' \in \Rn$ we can apply Theorem \ref{thm:VariableTransformationOfOsiInt} and get
  \begin{align} \label{eq12} 
    &a_L(x,D_x)u(x) 
    = \osint e^{i(x-x'')\cdot \xi'} \osint e^{-iy \cdot \eta} a(x, \eta + \xi', x+y) u(x'') dy \dq \eta d x'' \dq \xi'  \notag\\
    &\quad = \osint e^{i(x-x'')\cdot \xi'} \osint e^{-i(x'-x) \cdot (\xi-\xi')} a(x, \xi, x') u(x'') dx' \dq \xi d x'' \dq \xi'\notag\\ 
    &=\lim_{\e' \rightarrow 0} \iint e^{i(x-x'')\cdot \xi'} \chi(\e' x'', \e' \xi')  \notag\\
    &\qquad \qquad \lim_{\e \rightarrow 0} \iint  e^{-i(x'-x) \cdot (\xi-\xi')} a(x, \xi, x') \chi(\e x', \e \xi)  u(x'') dx' \dq \xi d x'' \dq \xi',
  \end{align}
  where $\chi \in \snn$ with $\chi(0,0)=1$.
  Integration by parts yields for arbitrary $0< \e < 1$ and $k, k' \in \N_0$ with $-N \leq -k<-n$ and $-2k'+m < -n$ on account of Remark \ref{bem:eFunktion}:
  \begin{align}\label{eq9}
    &\iint  e^{-i(x'-x) \cdot (\xi-\xi')} a(x, \xi, x') \chi(\e x', \e \xi)  u(x'') dx' \dq \xi \notag \\
    & \qquad = \iint  e^{-i(x'-x) \cdot (\xi-\xi')} b_{\e}(x,\xi,x',x'') dx' \dq \xi,
  \end{align}
  where 
  $$b_{\e}(x,\xi,x', \xi',x'') \hspace{-0.7mm}:= \hspace{-0.7mm} A^k(D_{\xi}, x'-x) \left[ \hspace{-0.4mm} \<{\xi - \xi'}^{-2k'} \<{D_{x'} }^{2k'} \chi(\e x', \e \xi) a(x,\xi, x') u(x'') \right]\hspace{-0.7mm}$$
  for all $x, \xi, x', \xi', x'' \in \Rn$ and each $0 \leq \e < 1$.  
  We choose $M_1, M_2 \in \N$ with $-M_2 < -2n$ and $-M_1 + M_2 < -n$. Using Leibniz's rule and Petree's inequality
  we obtain 
  for arbitrary but fixed $x, \xi', x'' \in \Rn$ if we use Remark \ref{bem:Absch}:
  \begin{align}\label{eq8}
    |b_{\e}(x,\xi,x', \xi',x'')| 
    &\leq C  \<{x'-x}^{-k}  \<{\xi - \xi'}^{-2k'} \<{\xi}^m \<{x''}^{-M_1} \notag \\
    &\leq C_x  \<{x'}^{-k}  \<{\xi'}^{2k'} \<{\xi}^{m-2k'} \<{x''}^{-M_1}
    \in L^1(\RnRnx{x'}{\xi}),
  \end{align}
  where $C_x$ is independent of $0< \e < 1$ and $x', \xi, \xi', x'' \in \Rn$.
  Since $b_{\e}(x,\xi,x', \xi',x'')$ converges to $b_{0}(x,\xi,x', \xi',x'')$ as $\e \rightarrow 0$ and (\ref{eq8}) holds we are able to apply Lebesgue's theorem to (\ref{eq9}) and get 
  \begin{align*}
    &e^{i(x-x'')\cdot \xi'} \chi(\e' x'', \e' \xi')  \iint  e^{-i(x'-x) \cdot (\xi-\xi')} a(x, \xi, x') \chi(\e x', \e \xi)  u(x'') dx' \dq \xi \notag\\
    &\qquad  \xrightarrow[]{\e \rightarrow 0} e^{i(x-x'')\cdot \xi'} \chi(\e' x'', \e' \xi') \iint e^{-i(x'-x) \cdot (\xi-\xi')} b_{0}(x,\xi,x', \xi',x'')  dx' \dq \xi
  \end{align*}
  for every $x, x'', \xi' \in \Rn$ and $0< \e' < 1$. 
  Additionally using (\ref{eq9}), (\ref{eq8}) and $\chi \in \snn$
  yields for fixed but arbitrary $0< \e' <1$:
  \begin{align*}
    &\left| e^{i(x-x'')\cdot \xi'} \chi(\e' x'', \e' \xi') \iint  e^{-i(x'-x) \cdot (\xi-\xi')} a(x, \xi, x') \chi(\e x', \e \xi)  u(x'') dx' \dq \xi \right| \notag \\
    &\qquad \leq C_{\e'} \iint \left| \<{(x'', \xi')}^{-M_2-2k'} e^{-i(x'-x) \cdot (\xi-\xi')} b_{\e}(x,\xi,x', \xi',x'') \right| dx' \dq \xi \notag \\
    &\qquad \leq C_{\e'} \iint \<{x'}^{-k} \<{\xi}^{m-2k'} \<{(x'', \xi')}^{-M_2} dx' \dq \xi \notag\\
    &\qquad \leq C_{\e'} \<{(x'', \xi')}^{-M_2}
    \in L^1(\RnRnx{x''}{\xi'}).
  \end{align*}
  Applying Lebesgue's theorem we obtain because of (\ref{eq12}):
  \begin{align}\label{eq13}
    a_L(x,D_x)u(x) &= \lim_{\e' \rightarrow 0} \lim_{\e \rightarrow 0} \iint e^{i(x-x'')\cdot \xi'} \chi(\e' x'', \e' \xi')  \notag \\
    & \qquad \iint  e^{-i(x'-x) \cdot (\xi-\xi')} a(x, \xi, x') \chi(\e x', \e \xi)  u(x'') dx' \dq \xi d x'' \dq \xi' \notag \\
    &= \lim_{\e' \rightarrow 0} \lim_{\e \rightarrow 0} \iiiint  e^{i(x-x'')\cdot \xi'} \chi(\e' x'', \e' \xi')  e^{-i(x'-x) \cdot (\xi-\xi')} a(x, \xi, x') \notag \\
    & \qquad \qquad \qquad \qquad \qquad   \chi(\e x', \e \xi)  u(x'') d x'' \dq \xi' dx' \dq \xi.
  \end{align}
  Now we define 
  \begin{align*}
    \tilde{a}_{\e'}(x,x', x'', \xi, \xi') &:= \chi(\e' x'', \e' \xi') a(x,\xi, x') u(x'') , \\
    a_{\e'}(x,x', x'', \xi, \xi') &:= \<{x'-x''}^{-2l_0} \<{ D_{\xi'} }^{2l_0} \left[ \<{\xi'}^{-2l_0'} \<{D_{x''} }^{2l_0'} \tilde{a}_{\e'}(x,x', x'', \xi, \xi') \right], \\
    \hat{a}_{\e'}(x,x', x'', \xi, \xi') &:= \<{-\xi + \xi'}^{-2l} \<{D_{x'} }^{2l}  a_{\e'}(x,x', x'', \xi, \xi') 
  \end{align*}
  for all $x,x', x'', \xi, \xi' \in \Rn$ and $0< \e < 1$. 
  Integrating by parts in (\ref{eq13}) provides:
  \begin{align}\label{eq15}
     &a_L(x,D_x)u(x) \notag\\ 
    &\quad = \lim_{\e' \rightarrow 0} \lim_{\e \rightarrow 0} \iiiint  e^{i(x-x'')\cdot \xi' -i(x'-x) \cdot (\xi-\xi')} \chi(\e x', \e \xi) a_{\e'}(x,x', x'', \xi, \xi')  d x'' \dq \xi' dx' \dq \xi \notag \\
    &\quad= \lim_{\e' \rightarrow 0}  \iiiint  e^{-ix''\cdot \xi' -i(x'-x) \cdot \xi + ix' \cdot \xi'} \hat{a}_{\e'}(x,x', x'', \xi, \xi')  d x'' \dq \xi' dx' \dq \xi
  \end{align}
  due to Proposition \ref{prop:PartIntFubiniLebesgueFuerReduktion}. We define
  \begin{align*}
    \hat{a}(x,x', x'', \xi, \xi') &:=  \<{-\xi + \xi'}^{-2l} \<{D_{x'} }^{2l}  a_0(x,x', x'', \xi, \xi'), \\
    a_0 (x,x', x'', \xi, \xi') &:= \<{x'-x''}^{-2l_0} \<{ D_{\xi'} }^{2l_0} \left[ \<{\xi'}^{-2l_0'} \<{D_{x''} }^{2l_0'}  a(x,\xi, x') u(x'') \right]
  \end{align*}
  for all $x,x', x'', \xi, \xi' \in \Rn$. Then
  \begin{align*}
    \hat{a}_{\e'}(x,x', x'', \xi, \xi') \xrightarrow[]{ \e' \rightarrow 0}   
    \hat{a}(x,x', x'', \xi, \xi')
  \end{align*}
  for all $x,x', x'', \xi, \xi' \in \Rn$. Similarly to (\ref{eq8}) we get due to Leibniz's rule and Petree's inequality:
  \begin{align*}
    |\hat{a}_{\e'}(x,x', x'', \xi, \xi')| 
    \leq C \<{x'}^{ - 2l_0} \<{x''}^{2l_0-M} \<{\xi}^{m-2l} \<{\xi'}^{2l-2l_0'} \in L^1(\Rn_{x'}\times \Rn_{\xi} \times \Rn_{x''} \times \Rn_{\xi'}).
  \end{align*}
  Here the constant $C$ is independent of $0< \e' <1$ and $x, x', x'', \xi, \xi' \in \Rn$. 
  An application of Lebesgue's theorem to (\ref{eq15}) provides:
  \begin{align*}
    a_L(x,D_x)u(x) 
    =\iiiint  e^{-ix''\cdot \xi' -i(x'-x) \cdot \xi + ix' \cdot \xi'}  \hat{a}(x,x', x'', \xi, \xi')  d x'' \dq \xi' dx' \dq \xi.
  \end{align*}
  Hence we get the claim by using Lemma \ref{lemma:SymbolReduction}.
\end{proof}

\subsection{Properties of the Operator $T_{\e}$}\label{section:PropertiesOfTe}

Besides the results of Subsection \ref{section:pointwiseConvergence} and Subsection \ref{section:symbolReduction} we need an approximation $( T_{\e} )_{\e \in (0,1]}$ for a given operator $T \in \mathcal{A}^{0, M}_{0,0}(\tilde{m},q)$ such that
\begin{itemize}
  \item $T_{\e}: \sd \rightarrow \s$ is continuous,
  \item The iterated commutators of $T_{\e}$ are uniformly bounded with respect to $\e$ as maps from $L^q(\Rn)$ to $L^q(\Rn)$, 
  \item $T_{\e}$ converges pointwise to $T$ if $\e \rightarrow 0$.
\end{itemize}

Throughout this subsection we assume: Let $1 < q < \infty$ and $M \in \N_0 \cup \{ \infty \}$ be arbitrary. Moreover, let $T \in \mathcal{A}^{0, M}_{0,0}(\tilde{m},q)$ with $\tilde{m} \in \N_0$ and $\varphi \in \con$ with $\varphi(x)=1$ for all $|x|\leq \frac{1}{2}$ and  $\varphi(x)=0$ for all $|x|\geq 1$. Then we define for $0< \e \leq 1$ 
  \begin{align*}
    P_{\e} := \tilde{p}_{\e}(x,D_x) \qquad \text{ and } \qquad Q_{\e} := q_{\e}(x,D_x), 
  \end{align*}
  where  the symbols 
  $\tilde{p}_{\e}$ and $q_{\e}$ are defined as $\tilde{p}_{\e}(x,\xi):=\varphi(\e x)$ and $q_{\e}(x,\xi):=\varphi(\e \xi)$. Then $\{p_{\varepsilon}| 0 < \varepsilon \leq 1 \} $ and $\{q_{\varepsilon}|0 < \varepsilon \leq 1 \} $ are bounded subsets of $S^0_{1, 0}(\RnRn)$, cf. e.g. \cite[Lemma 3.4]{Diss} for more details. Note that $u \in \s$ we have $P_{\e} u= \tilde{p}_{\e} u$.
  Additionally the continuity of multiplication operators with $C^{\infty}_c$-functions imply the continuity of $P_{\e} : C^{\infty}(\Rn) \rightarrow C^{\infty}_c(\Rn)$. Moreover, we define 
  $$T_{\e} := P_{\e} Q_{\e} T  P_{\e} Q_{\e}.$$

For the following we need:

\begin{lemma}\label{lemma:LqLimitOfTepsilonu}
	For all $u \in L^q(\Rn)$ we have the following convergence:
	\begin{align*}
		L^q- \lim_{\e \rightarrow 0} T_{\e}u = Tu.
	\end{align*}
\end{lemma}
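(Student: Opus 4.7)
The plan is to reduce the claim to the strong $L^q$-convergence of the cutoffs $P_\e \to I$ and $Q_\e \to I$, combined with the uniform $L^q$-boundedness of $\{P_\e\}$, $\{Q_\e\}$ and the $L^q \to L^q$ boundedness of $T$. Note first that $T \in \mathcal{A}^{0,M}_{0,0}(\tilde m, q)$ contains, as the trivial case $l = 0$ in the commutator condition, that $T \in \mathscr{L}(H^0_q(\R^n), L^q(\R^n)) = \mathscr{L}(L^q(\R^n))$. Since $\{\tilde p_{\e}\}_{\e \in (0,1]}$ and $\{q_{\e}\}_{\e \in (0,1]}$ are bounded subsets of $S^0_{1,0}(\R^n\times\R^n)$, Theorem~\ref{thm:stetigInBesselPotRaum} together with Lemma~\ref{lemma:UnabhangigkeitVomSymbol} (applied to $X_1 = X_2 = L^q$ and $\mathscr{B} = S^0_{1,0}$) supplies a single constant $C$ with
\begin{align*}
  \|P_{\e}\|_{\mathscr{L}(L^q)} + \|Q_{\e}\|_{\mathscr{L}(L^q)} \leq C \qquad \text{for all } \e \in (0,1],
\end{align*}
so that in particular $\sup_{\e \in (0,1]} \|T_{\e}\|_{\mathscr{L}(L^q)} < \infty$.

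Next, I would establish the two strong-convergence statements: for every $u \in L^q(\R^n)$,
\begin{align*}
  P_{\e} u \xrightarrow[]{\e \rightarrow 0} u \qquad \text{and} \qquad Q_{\e} u \xrightarrow[]{\e \rightarrow 0} u \qquad \text{in } L^q(\R^n).
\end{align*}
The convergence $P_{\e} u \to u$ is immediate from Lebesgue's dominated convergence theorem, since $\varphi(\e x) u(x) \to u(x)$ pointwise and $|\varphi(\e x) u(x)| \leq \|\varphi\|_{L^\infty} |u(x)|$ yields an $L^q$-majorant. For $Q_{\e}$, since $\varphi \in \con \subseteq \mathcal{S}(\R^n)$ with $\varphi(0) = 1$, the operator $Q_{\e}$ extends to $L^q$ as convolution with the approximate identity $k_{\e}(x) := \e^{-n} \mathscr{F}^{-1}[\varphi](x/\e)$ (which has total integral $\varphi(0) = 1$ and is dominated by a Schwartz function); the standard approximate identity argument combined with the uniform bound above and the density of $C_c^\infty(\Rn)$ in $L^q(\R^n)$ yields $\|Q_{\e} u - u\|_{L^q} \to 0$.

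Finally, I would combine the two convergences: the telescoping identity
\begin{align*}
  T_{\e} u - T u = P_{\e} Q_{\e} T \left( P_{\e} Q_{\e} u - u \right) + \left( P_{\e} Q_{\e} - I \right) T u
\end{align*}
reduces the claim to estimating each summand. For the first term the uniform bound on $\|P_{\e} Q_{\e} T\|_{\mathscr{L}(L^q)}$ together with the just-established convergence $\|P_{\e} Q_{\e} u - u\|_{L^q} \leq \|P_{\e}\|_{\mathscr{L}(L^q)} \|Q_{\e} u - u\|_{L^q} + \|P_{\e} u - u\|_{L^q} \to 0$ suffices, and for the second term one applies the same strong convergence to $Tu \in L^q(\R^n)$.

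The only non-trivial ingredient is the strong $L^q$-convergence of the Fourier cutoff $Q_{\e}$, which relies on its interpretation as convolution with an approximate identity and consequently fails for $q = \infty$; it is precisely the assumption $1 < q < \infty$ (together with the uniform symbol bounds) that makes the rest of the argument a routine $\varepsilon$-$\delta$ concatenation of strong convergence with uniform boundedness.
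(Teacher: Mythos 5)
Your proof is correct and follows essentially the same route as the paper's: establish the strong $L^q$-convergences $P_\e u \to u$ and $Q_\e u \to u$ together with the uniform $L^q$-boundedness of $\{P_\e\}$, $\{Q_\e\}$, and $T$, then conclude via the telescoping estimate for $T_\e u - Tu$. The paper cites Banach--Steinhaus and defers the details of the strong convergence to the thesis, whereas you spell out the underlying arguments (dominated convergence for $P_\e$, approximate identity for $Q_\e$), but the decomposition and logical structure are the same.
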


\begin{proof}
  With the theorem of Banach-Steinhaus at hand, we easily can show
  \begin{align}\label{eq36}
    Q_{\e}u \xrightarrow[]{\e \rightarrow 0} u \qquad \text{and} \qquad P_{\e}u \xrightarrow[]{\e \rightarrow 0} u \qquad \text{in } L^q(\Rn).
  \end{align}
  For more details, see \cite[Proof of Lemma 5.27]{Diss}. 
  By means of (\ref{eq36}) and Theorem \ref{thm:stetigInBesselPotRaum}  we get for all $u \in L^q(\Rn)$:
  \begin{align*}
    \| P_{\e} Q_{\e}u - u \|_{L^q(\Rn)} 
    \leq C \| Q_{\e}u - u \|_{L^q(\Rn)} + \| P_{\e}u - u \|_{L^q(\Rn)} \xrightarrow[]{\e \rightarrow 0} 0.
  \end{align*}
  An application of Theorem \ref{thm:stetigInBesselPotRaum} gives us for all $u \in L^q(\Rn)$:
  \begin{align*}
    \| T_{\e} u - T u \|_{L^q(\Rn)} 
    &\leq \| P_{\e} Q_{\e} T P_{\e} Q_{\e} u - P_{\e} Q_{\e} T u \|_{L^q(\Rn)} + \| P_{\e} Q_{\e} T  u - T u \|_{L^q(\Rn)} \\
    &\leq C \| P_{\e} Q_{\e} u - u \|_{L^q(\Rn)} + \| P_{\e} Q_{\e} T  u - T u \|_{L^q(\Rn)} \xrightarrow[]{\e \rightarrow 0} 0.
  \end{align*}
  \vspace*{-1cm}

\end{proof}

\begin{lemma}\label{lemma:ContinuityOfIteratedCommutatorOfTeFromLqToLq}
	Let $\alpha, \beta \in \Non$ with $|\beta| \leq \tilde{m}$ and $|\alpha| \leq M$. Then
	\begin{align*}
		\| \ad(-ix)^{\alpha} \ad(D_x)^{\beta} T_{\e} \|_{ \mathscr{L}(L^q(\Rn)) } \leq C_{\alpha, \beta} \qquad \text{for all } 0 < \e \leq 1.
	\end{align*}
\end{lemma}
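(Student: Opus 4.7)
The strategy is a Leibniz expansion of the iterated commutator across the fivefold composition $T_\e = P_\e Q_\e T P_\e Q_\e$, together with uniform-in-$\e$ estimates on each factor of the resulting sum. Since the operators $\ad(-ix_j)$ and $\ad(D_{x_j})$, $j=1,\ldots,n$, are mutually commuting derivations on the algebra of linear operators on $\s$, the iterated commutator $\ad(-ix)^\alpha\ad(D_x)^\beta$ applied to $T_\e$ equals a finite sum, with multinomial coefficients, over decompositions $\alpha = \sum_{k=1}^{5} \alpha^{(k)}$, $\beta = \sum_{k=1}^{5} \beta^{(k)}$, of products
\begin{equation*}
  \prod_{k=1}^{5} \ad(-ix)^{\alpha^{(k)}} \ad(D_x)^{\beta^{(k)}} A_k, \qquad (A_1,\ldots,A_5) = (P_\e, Q_\e, T, P_\e, Q_\e).
\end{equation*}
It therefore suffices to bound each factor in $\mathscr{L}(L^q(\Rn))$ uniformly in $\e \in (0,1]$, multiply these bounds, and sum over the finitely many decompositions.

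The $P_\e$- and $Q_\e$-factors can be read off from Remark \ref{bem:SymbolOfIteratedCommutator}. Since the symbol $\tilde p_\e(x,\xi) = \varphi(\e x)$ is $\xi$-independent, the factor $\ad(-ix)^{\alpha^{(k)}}\ad(D_x)^{\beta^{(k)}} P_\e$ (for $k \in \{1,4\}$) vanishes unless $\alpha^{(k)} = 0$, and in that case reduces to multiplication by $\e^{|\beta^{(k)}|}(\p^{\beta^{(k)}}\varphi)(\e x)$, whose $L^\infty$-norm is bounded uniformly in $\e$. Symmetrically, $q_\e(x,\xi) = \varphi(\e\xi)$ is $x$-independent, so the factor for $k \in \{2,5\}$ vanishes unless $\beta^{(k)} = 0$, leaving a pseudodifferential operator with symbol $\e^{|\alpha^{(k)}|}(\p^{\alpha^{(k)}}\varphi)(\e\xi)$. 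The latter family is bounded in $S^0_{1,0}(\RnRn)$ (in fact in $S^{-|\alpha^{(k)}|}_{1,0}$) uniformly in $\e$, so Theorem \ref{thm:stetigInBesselPotRaum} combined with the uniformity statement of Lemma \ref{lemma:UnabhangigkeitVomSymbol} yields a uniform bound in $\mathscr{L}(L^q(\Rn))$.

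For the middle factor I would invoke directly the hypothesis $T \in \mathcal{A}^{0, M}_{0,0}(\tilde m, q)$. Because $m = 0$ and $\rho = 0$, the Bessel potential domain $H^{m-\rho|\alpha^{(3)}|}_q(\Rn)$ collapses to $L^q(\Rn)$, and the constraints $|\alpha^{(3)}| \leq |\alpha| \leq M$ and $|\beta^{(3)}| \leq |\beta| \leq \tilde m$ hold automatically since each $\alpha^{(k)}, \beta^{(k)}$ is dominated by $\alpha, \beta$ in the decomposition. Hence $\ad(-ix)^{\alpha^{(3)}}\ad(D_x)^{\beta^{(3)}} T$ is continuous on $L^q(\Rn)$ with a norm independent of $\e$. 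Composing the uniform estimates of the five factors and summing over the finitely many multi-index decompositions produces the desired uniform bound, first on $\s$ and then, by density of $\s$ in $L^q(\Rn)$, on all of $L^q(\Rn)$.

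I do not anticipate a serious obstacle here. The one point that requires a bit of care is justifying the Leibniz expansion itself: all five factors must be composable when acting on Schwartz functions, which is the case since $P_\e Q_\e$ maps $\sd$ into $\s$, $T$ and its iterated commutators act $L^q \to L^q$ (and $\s \subset L^q$), and the smooth pseudodifferential operators arising from the commutators of $P_\e$ and $Q_\e$ preserve both $\s$ and $L^q$. Once this is in place, the rest is only a bookkeeping of multi-indices and uniform symbol estimates.
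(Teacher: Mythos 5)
Your proof is correct and follows essentially the same approach as the paper's: a Leibniz expansion of the iterated commutator across the composition $T_\e = P_\e Q_\e T P_\e Q_\e$ (the paper writes the sum directly with three index-triples, since the $P_\e$-factors vanish under $\ad(-ix)$ and the $Q_\e$-factors vanish under $\ad(D_x)$, exactly as you observe), followed by uniform-in-$\e$ bounds on each factor in $\mathscr{L}(L^q(\Rn))$ via Remark \ref{bem:SymbolOfIteratedCommutator}, Theorem \ref{thm:stetigInBesselPotRaum}, the uniform boundedness of $\{\tilde p_\e\}$, $\{q_\e\}$ in $S^0_{1,0}(\RnRn)$, and the hypothesis $T \in \mathcal{A}^{0,M}_{0,0}(\tilde m, q)$.
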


\begin{proof}
  Let $\alpha, \beta \in \Non$ with  $|\beta| \leq \tilde{m}$ and $|\alpha| \leq M$. 
  We define
  \begin{align*}
    R^{\beta_1, \beta_2, \beta_3}_{\alpha_1, \alpha_2, \alpha_3}:= \left[ \ad(D_{x})^{\beta_1} P_{\e} \right] \left[ \ad(-ix)^{\alpha_1} Q_{\e} \right] T^{\alpha_2,\beta_2} \left[ \ad(D_{x})^{\beta_3} P_{\e} \right] \left[ \ad(-ix)^{\alpha_3} Q_{\e} \right],
  \end{align*}
  where $T^{\alpha_2,\beta_2}:= \ad(-ix)^{\alpha_2}\ad(D_{x})^{\beta_2} T$.
  Then 
  we obtain for all $u \in \s$
  \begin{align*}
    \ad(-ix)^{\alpha} \ad(D_x)^{\beta} T_{\e} u = \sum_{ \substack{ \alpha_1 + \alpha_2 + \alpha_3 = \alpha \\ \beta_1 + \beta_2 + \beta_3 = \beta} } C_{\alpha_1, \alpha_2, \beta_1, \beta_2} R^{\beta_1, \beta_2, \beta_3}_{\alpha_1, \alpha_2, \alpha_3} u. 
  \end{align*}
  Due to Remark \ref{bem:SymbolOfIteratedCommutator} we get $\ad(D_{x})^{\gamma} P_{\e} \in \op S^0_{1,0}$ and $\ad(-ix)^{\delta} Q_{\e} \in \op S^{-|\delta|}_{1,0} \subseteq \op S^0_{1,0}$ for each $\gamma, \delta \in \Non$. On account of Theorem \ref{thm:stetigInBesselPotRaum}, the boundedness of  $\{p_{\varepsilon}| 0 < \varepsilon \leq 1 \} $ and $\{q_{\varepsilon}|0 < \varepsilon \leq 1 \} $ in $S^0_{1, 0}(\RnRn)$ and of $T \in \mathcal{A}^{0,M}_{0,0}(\tilde{m},q)$ we obtain 
  \begin{align*}
  	\| \ad(-ix)^{\alpha} \ad(D_x)^{\beta} T_{\e} u \|_{L^q} \leq C_{\alpha, \beta, q} \| u \|_{L^q} \qquad \text{for all } u \in \s.
  \end{align*}
  \vspace*{-1cm}

\end{proof}

\begin{prop}\label{prop:SmoothnessOfPepsilon0}
  Let $g \in \s$ and $0 < \e \leq 1$. For each $y \in \Rn$ we define $g_y:= \tau_y(g)$. Moreover, we define 
  $$p_{\e,0} (x,\xi,y):= e^{-ix \cdot \xi} T_{\e} ( e_{\xi} g_y)(x) \qquad \text{for all } (x,\xi,y) \in \RnRnRn.$$
  Then $p_{\e,0} \in C^{\infty}(\RnRnRn)$.
\end{prop}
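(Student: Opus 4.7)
My approach hinges on first establishing that $T_{\e}: \sd \to \s$ is continuous; once this is in hand, the smoothness of $p_{\e,0}$ will follow essentially for free from Lemma \ref{lemma:AbleitungVonp}.

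First I would verify the continuity of $T_{\e}: \sd \to \s$. The operator $Q_{\e}$ is convolution with $K_{\e} := \mathscr{F}^{-1}[\varphi(\e \cdot)] \in \s$, so for $u \in \sd$ the distribution $Q_{\e} u$ is a $C^{\infty}$ function of at most polynomial growth (it is in fact entire of exponential type by Paley--Wiener, since $\mathscr{F}[Q_{\e} u] = \varphi(\e \cdot)\hat{u}$ has compact support). Multiplying by $\varphi(\e \cdot) \in C^{\infty}_c(\Rn)$ then yields $P_{\e} Q_{\e} u \in C^{\infty}_c(\Rn) \subseteq \s$, and each of these two steps is continuous in the relevant topologies. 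Since $\s \hookrightarrow H^0_q$, the operator $T$ continues into $L^q \hookrightarrow \sd$, and a second application of $P_{\e} Q_{\e}: \sd \to \s$ closes the chain, yielding $T_{\e}: \sd \to \s$ continuously.

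Next, with this continuity established, I would apply Lemma \ref{lemma:AbleitungVonp} with $P := T_{\e}$. For arbitrary $\alpha, \beta, \gamma \in \Non$ the lemma furnishes the explicit formula
\begin{align*}
  \p^{\alpha}_{\xi} D_x^{\beta} D_y^{\gamma} p_{\e,0}(x,\xi,y)
  = (-1)^{|\gamma|} \sum_{\beta_1 + \beta_2 = \beta} \binom{\beta}{\beta_1} e^{-ix \cdot \xi} \bigl( \ad(-ix)^{\alpha} \ad(D_x)^{\beta_1} T_{\e} \bigr) \bigl( e_{\xi} D_x^{\beta_2 + \gamma} g_y \bigr)(x),
\end{align*}
so in particular mixed partial derivatives of all orders exist.

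Finally, I would argue that each such derivative is continuous in $(x,\xi,y)$. Every iterated commutator $R_{\alpha,\beta_1} := \ad(-ix)^{\alpha} \ad(D_x)^{\beta_1} T_{\e}: \sd \to \s$ is continuous; the map $(\xi,y) \mapsto e_{\xi} D_x^{\beta_2+\gamma} g_y$ is continuous (in fact smooth) from $\RnRn$ into $\s$ because $g_y = \tau_y g$ depends smoothly on $y$ in $\s$ and multiplication by $e_{\xi}$ depends smoothly on $\xi$; and the evaluation map $\s \times \Rn \to \C$, $(f,x) \mapsto f(x)$, is jointly continuous. Combining these facts with the smoothness of $(x,\xi) \mapsto e^{-ix \cdot \xi}$ shows each partial derivative of $p_{\e,0}$ is continuous, giving $p_{\e,0} \in C^{\infty}(\RnRnRn)$. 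The one step requiring some care is the bookkeeping for $T_{\e}: \sd \to \s$ (since one must juggle five factors acting on several different function spaces), but each individual step is routine.
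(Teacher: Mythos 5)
Your proof is correct, but it takes a genuinely different route from the paper's. The paper's own argument is an operator-valued $C^k$-calculus argument: it introduces the maps $\tilde{\delta}$ (evaluation), $\tilde{G}$ (translation of $g$) and $\tilde{M}$ (modulation), shows that $\tilde{G}$ is smooth into $L^q_k(\Rn)$ and that $\tilde{\delta}, \tilde{M}$ are $k$-times continuously differentiable into the relevant operator spaces, and then composes via the product rule together with the auxiliary fact $T_{\e} \in \mathscr{L}(L^q(\Rn), C^{k+1}_b(\Rn))$; this is carried out for each $k$, giving $p_{\e,0} \in C^k$ for all $k$. You instead leverage Lemma \ref{lemma:AbleitungVonp} directly: since $T_{\e}: \sd \to \s$ is continuous, that lemma already establishes the existence of $\p_{\xi}^{\alpha} D_x^{\beta} D_y^{\gamma} p_{\e,0}$ for all multi-indices and gives a closed-form expression in terms of iterated commutators; you then prove continuity of each such derivative by combining the continuity of $\ad(-ix)^{\alpha}\ad(D_x)^{\beta_1}T_{\e}:\s\to\s$, the continuity of $(\xi,y)\mapsto e_{\xi}D_x^{\beta_2+\gamma}g_y$ into $\s$, and the joint continuity of the evaluation map $\s\times\Rn\to\C$. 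Both approaches are sound; yours is arguably more economical because it reuses the derivative formula already established in Lemma \ref{lemma:AbleitungVonp}, whereas the paper's Banach-space-valued argument is more self-contained (not relying on the kernel-theorem machinery behind Lemma \ref{lemma:AbleitungVonp}) and outsources the technical differentiability claims for $\tilde{G}, \tilde{\delta}, \tilde{M}$ to the thesis. One small caution in your write-up: Lemma \ref{lemma:AbleitungVonp} produces the derivatives in the fixed order $\p_\xi^\alpha D_x^\beta D_y^\gamma$, so you should make explicit that existence and continuity of all derivatives in this order, for all multi-indices, suffices to conclude $p_{\e,0}\in C^\infty$ (this is a standard consequence of the Schwarz/Clairaut theorem, but it is the lemma that glues your pieces together).
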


In order to prove the previous proposition, we need:

\begin{Def}
  For $k \in \N_0$ we define the normed space $L^q_k(\Rn)$ as
  \begin{align*}
    L^q_k(\Rn):= \left\{ f\in L^q(\Rn) : \| f \|_{L^q_k} := \| \<{x}^{k+1} f(x) \|_{L^q(\Rn_x)} < \infty \right\}.
  \end{align*}
\end{Def}

\begin{proof}[Proof of Proposition \ref{prop:SmoothnessOfPepsilon0}:]
  Let $k \in \N_0$ be arbitrary but fixed. For every $x,\xi \in \Rn$, $f \in C^{k+1}_b(\Rn)$ and each $h \in L^q_k(\Rn)$ we define $\delta_x(f):=f(x)$ and $M_{\xi}(h):= e_{\xi}h$. 
  We define $\tilde{\delta}: \RnRnRn \rightarrow \mathscr{L} (C^{k+1}_b(\Rn), \C)$, $\tilde{G}: \RnRnRn \rightarrow L^q_k(\Rn)$ and $\tilde{M}: \RnRnRn \rightarrow \mathscr{L} ( L^q_k(\Rn), L^q(\Rn) )$ by
  \begin{align*}
    \tilde{\delta}(x,y,\xi):= \delta_{x}, \; \tilde{G} (x,y,\xi):= g_y, \; \tilde{M}(x,y,\xi):= M_{\xi} \qquad \text{for all } x,y,\xi \in \Rn. 
  \end{align*}
  One can show that $\tilde{G}$ is a smooth function and that $\tilde{\delta}, \tilde{M}$ are k-times continuous differentiable, cf.\,\cite[Proposition 5.33 and Proposition 5.34]{Diss}. 
  On account of the product rule 
  we get
  \begin{align}\label{p16}
    \tilde{M}(x,y,\xi) \circ \tilde{G}(x,y,\xi) 
    \in C^k( \Rn_x \times \Rn_y \times \Rn_{\xi}, L^q(\Rn)). 
  \end{align}
  Since 
  $T_{\e} \in \mathscr{L} (L^q(\Rn), C_b^{k+1}(\Rn))$, cf.\,\cite[Lemma 5.29]{Diss},
  we obtain 
  \begin{align*}
    T_{\e} (\tilde{M}(x,y,\xi) \circ \tilde{G}(x,y,\xi)) \in C^k( \Rn_x \times \Rn_y \times \Rn_{\xi}, C_b^{k+1}(\Rn))
  \end{align*}
  due to (\ref{p16}). Applying 
  the product rule
  again yields
  \begin{align*}
      p_{\e,0} (x,y,\xi)=e^{-ix \cdot \xi} \tilde{\delta}(x,y,\xi) \circ T_{\e} (\tilde{M}(x,y,\xi) \circ \tilde{G}(x,y,\xi)) \in C^k( \Rn_x \times \Rn_y \times \Rn_{\xi} ).
  \end{align*}
\end{proof}

\subsection[Characterization of Operators with Symbols in $C^s S^m_{0,0}$]{Characterization of Pseudodifferential Operators with Symbols in $C^s S^m_{0,0}$}\label{section:classificationA00}

In this subsection we will first prove the 
the characterization of pseudodifferential operators with symbols of the symbol-class $C^s \Snn{0}{0}{0}{M}$. Then the result is extended to non-smooth pseudodifferential operators of the class $C^s \Snn{m}{0}{0}{M}$ of the order $m$. In the non-smooth case, one is confronted with the following problem: In general we do not have the continuity of non-smooth pseudodifferential operators with coefficients in a Hölder space as a map from $H^m_q(\Rn)$ to $L^q(\Rn)$. But every element of the set $ \mathcal{A}^{m,M}_{0,0}(\tilde{m},q)$ is a linear and bounded map from $H^m_q(\Rn)$ to $L^q(\Rn)$. Hence this ansatz just provides a characterization of those non-smooth pseudodifferential operators which are linear and bounded as maps from $H^m_q(\Rn)$ to $L^q(\Rn)$. As already mentioned, the proof relies on the main 
idea of 
the proof in the smooth case by Ueberberg \cite{Ueberberg}.\\

\begin{thm} \label{thm:classA00}
  Let $1<q<\infty$ and $m \in \N_0$ with $m>n/q$. Additionally let $M \in \N \cup \{ \infty \}$ with $M  >n$.  We define $\tilde{M}:= M-(n+1)$. Considering $T \in \mathcal{A}^{0,M}_{0,0}(m,q)$ and $\tilde{M}\geq 1$, we get for all $0< \tau \leq m- n/q$ with $\tau \notin \N_0$
  \begin{align*}
    T\in \op C^{\tau}S^0_{0,0}(\RnRn; \tilde{M}-1) \cap \mathscr{L}(L^q(\Rn)).
  \end{align*}
  \vspace*{-1cm}

\end{thm}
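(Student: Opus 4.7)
The plan is to adapt the strategy of Ueberberg~\cite{Ueberberg} from the smooth case to the present non-smooth setting by combining the three technical tools of Subsections~\ref{section:pointwiseConvergence}--\ref{section:PropertiesOfTe}. Fix $g\in\mathcal{S}(\R^n)$ with $g(0)=1$ and, for $0<\e\le 1$, introduce the double-symbol candidate
\[
  p_\e(x,\xi,y):=e^{-ix\cdot\xi}T_\e(e_\xi g_y)(x),\qquad x,\xi,y\in\R^n.
\]
Proposition~\ref{prop:SmoothnessOfPepsilon0} gives $p_\e\in C^\infty(\R^{3n})$, and Lemma~\ref{lemma:AbleitungVonp} expresses $\partial_\xi^\alpha D_x^\beta D_y^\gamma p_\e(x,\xi,y)$ as a finite linear combination of
\[
  e^{-ix\cdot\xi}\bigl(\ad(-ix)^\alpha\ad(D_x)^{\beta_1}T_\e\bigr)(e_\xi D_x^{\beta_2+\gamma}g_y)(x),\qquad \beta_1+\beta_2=\beta.
\]
Using the Schwartz kernel of $T_\e$, the substitution $w=z-y$ and the Fourier-inversion identity $\osint e^{iw\cdot\xi}f(w)\,dw\,\dq\xi=f(0)$ (applied to $f(w)=g(w)u(z-w)$) yield the key identity $p_\e(x,D_x,x')u=T_\e u$ for every $u\in\mathcal{S}$, which lets us view $p_\e$ as a bona-fide smooth double symbol of $T_\e$.

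The core step is a uniform in $\e$ double-symbol estimate. Since $\|e_\xi D_x^{\beta_2+\gamma}g_y\|_{L^q}=\|D^{\beta_2+\gamma}g\|_{L^q}$ is independent of $\xi,y$, and since the iterated commutators $\ad(-ix)^\alpha\ad(D_x)^{\beta_1}T_\e$ are uniformly $L^q$-bounded for $|\alpha|\le M$ and $|\beta_1|\le m$ by Lemma~\ref{lemma:ContinuityOfIteratedCommutatorOfTeFromLqToLq}, the derivative formula above gives
\[
  \|D_x^\beta\partial_\xi^\alpha D_y^\gamma p_\e(\cdot,\xi,y)\|_{L^q(\R^n_x)}\le C_{\alpha,\beta,\gamma},
\]
uniformly in $\e\in(0,1]$ and $\xi,y\in\R^n$, for every $|\alpha|\le M$, $|\beta|\le m$ and $\gamma\in\N_0^n$. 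The Sobolev embedding $W^{m,q}(\R^n)\hookrightarrow C^\tau(\R^n)$, valid precisely because $m>n/q$ and $\tau\le m-n/q$ with $\tau\notin\N_0$, then upgrades this to the pointwise Hölder estimate
\[
  \|\partial_\xi^\alpha D_y^\gamma p_\e(\cdot,\xi,y)\|_{C^\tau(\R^n_x)}\le C_{\alpha,\gamma},
\]
so $\{p_\e\}_{\e\in(0,1]}$ is a bounded subset of $C^\tau S^0_{0,0}(\R^n\times\R^n\times\R^n;M)$ in the double-symbol sense.

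With this uniform bound in hand I apply Theorem~\ref{thm:pointwiseConvergence} (in its obvious three-variable analogue, which follows by the same diagonal argument) to extract a subsequence $p_{\e_l}\to p$ converging pointwise together with its $\xi$-derivatives of order $\le M-1$, with limit $p\in C^\tau S^0_{0,0}(\R^n\times\R^n\times\R^n;M-1)$. The non-smooth symbol reduction Theorem~\ref{thm:SymbolReduktionNichtGlatt} converts $p$ into the single symbol
\[
  a_L(x,\xi):=\osint e^{-iy\cdot\eta}p(x,\xi+\eta,x+y)\,dy\,\dq\eta \;\in\; C^\tau S^0_{0,0}(\R^n\times\R^n;\tilde M-1),
\]
with $p(x,D_x,x')u=a_L(x,D_x)u$ on $\mathcal{S}$. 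Passing to the limit $\e_l\to 0$ in the identity $T_{\e_l}u=p_{\e_l}(x,D_x,x')u$ for $u\in\mathcal{S}$ --- the left-hand side converges to $Tu$ in $L^q$ by Lemma~\ref{lemma:LqLimitOfTepsilonu}, and the right-hand side to $p(x,D_x,x')u=a_L(x,D_x)u$ via Corollary~\ref{kor:VertauschbarkeitVonOsziIntUndLimes} and Lebesgue's theorem, the necessary uniform domination being a by-product of the symbol estimates above --- yields $Tu=a_L(x,D_x)u$ on $\mathcal{S}$. Density of $\mathcal{S}$ in $L^q$ together with $T\in\mathscr{L}(L^q)$ then extends this to $L^q$ and completes the proof.

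The principal obstacle is the second step, namely producing a uniform \emph{pointwise} (and in fact Hölder) control of $p_\e$ starting from the merely $L^q$-operator bounds of Lemma~\ref{lemma:ContinuityOfIteratedCommutatorOfTeFromLqToLq}. This is achieved through the Sobolev embedding $W^{m,q}\hookrightarrow C^\tau$, which is precisely why the hypothesis $m>n/q$ and the restriction $\tau\le m-n/q$, $\tau\notin\N_0$ appear. The numerical loss $M\leadsto M-(n+2)=\tilde M-1$ in the index of the final symbol class arises transparently from the two approximation steps: one index is lost in the pointwise-convergence Theorem~\ref{thm:pointwiseConvergence} and a further $n+1$ in the symbol reduction Theorem~\ref{thm:SymbolReduktionNichtGlatt}. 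A secondary technical point is the legitimate interchange of the $\e_l\to 0$ limit with the oscillatory integral defining $p_\e(x,D_x,x')u$, which is handled via the uniform symbol bounds obtained in the core step.
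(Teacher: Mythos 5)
Your proposal is structurally sound and rests on exactly the three auxiliary tools of Subsections~\ref{section:pointwiseConvergence}--\ref{section:PropertiesOfTe}, but you apply two of them in the opposite order from the paper. The paper first reduces the bounded family of double symbols $p_{\e,0}(x,\xi,y)$ to a bounded family of \emph{single} symbols $p_\e(x,\xi)\in C^\tau S^0_{0,0}(\RnRn;\tilde M)$ via Theorems~\ref{thm:SymbolReduktionNichtGlatt} and \ref{thm:aLInCsSm00}, and only then extracts a pointwise-convergent subsequence with Theorem~\ref{thm:pointwiseConvergence}, losing one more index to land in $C^\tau S^0_{0,0}(\RnRn;\tilde M-1)$. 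You instead extract a convergent subsequence of the \emph{double} symbols first and reduce the limit afterwards. Both orderings spend the same total budget $(n+1)+1$, so the final class index agrees, and the overall plan works. The paper's order buys two concrete simplifications that your route has to pay for separately. First, Theorem~\ref{thm:pointwiseConvergence} is stated and proved only for single symbols; the ``three-variable analogue'' you invoke requires an extra diagonal argument that must also control all the $y$-derivatives (the double-symbol class is $C^\infty$ in $x'$, and the limit must retain this), not merely the $\xi$-derivatives you mention, because passing the limit through the double-symbol oscillatory integral via Corollary~\ref{kor:VertauschbarkeitVonOsziIntUndLimes} needs convergence of $\p_y^\beta$-derivatives of the integrand for every $\beta$. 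Second, once the paper has single symbols in hand, the pointwise convergence $p_{\e_k}(x,D_x)u\to p(x,D_x)u$ is a direct Lebesgue argument on the absolutely convergent Fourier integral $\int e^{ix\cdot\xi}p_{\e_k}(x,\xi)\hat u(\xi)\,\dq\xi$, dominated by $C|\hat u(\xi)|$; in your ordering the corresponding step is the convergence of a double-symbol oscillatory integral $p_{\e_l}(x,D_x,x')u(x)$, which requires the full machinery of Corollary~\ref{kor:VertauschbarkeitVonOsziIntUndLimes} together with the majorant from Lemma~\ref{lemma:AbschatzungDoppelSymbol} -- you allude to this as a ``by-product of the symbol estimates above'' but it deserves to be spelled out. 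None of this is a fatal gap, but your version is technically heavier at precisely the points where the paper's ordering makes the argument trivial.
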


\begin{proof}
  Let $\tau \in (0,m-n/q]$ with $\tau \notin \N$ be arbitrary but fixed. 
  Let $T_{\e}$, $\e \in (0,1]$, be as in Subsection \ref{section:PropertiesOfTe}.
  Then  $T_{\e} : \sd \rightarrow \s$ is continuous, cf. \cite[Lemma 5.28]{Diss}. 
  The proof of this theorem is divided into three different parts. First we write $T_{\e}$ as a pseudodifferential operator with a double symbol. In step two we reduce the double symbol to an ordinary symbol $p_{\e}$ of $T_{\e}$. Finally, we conclude the proof in part three. Here we use the pointwise convergence of a subsequence of $( p_{\e} )_{\e > 0}$ to get a symbol $p$ with the property $p(x,D_x)u = Tu$ for all $u \in \s$. \\

  We begin with step one: Since $T_{\e} : \sd \rightarrow \s$ is continuous, Theorem \ref{thm:SchwartzKernel} gives us the existence of a Schwartz-kernel $t_{\e} \in \snn$ of $T_{\e}$. Thus 
  \begin{align}\label{36}
    T_{\e} u(x) = \int t_{\e}(x,y) u(y) dy \qquad \text{for all } u \in \s \text{ and all } x\in \Rn.
  \end{align} 
  Now we choose $u,g \in \s$ with $g(0)=1$ and $g(-x) = g(x)$ for all $x\in \R$. We define $g_y: \Rn \rightarrow \C$ for $y \in \Rn$ by $g_y:= \tau_y(g)$.
  Next let $x \in \Rn$ be arbitrary, but fixed. Then we define 
  \begin{align*}
    h(z) := u(z) g_z(x) \qquad \text{for all } z \in \Rn. 
  \end{align*}
  Using the inversion formula, cf. e.g. \cite[Example 3.11]{PDO},
  we obtain
  \begin{align*}
    u(x)=h(x)= \osint e^{i(x-y) \cdot \xi} h(y) dy \dq \xi = \osint e^{i(x-y) \cdot \xi} u(y) g_y(x) dy \dq \xi.  
  \end{align*}
  If we first insert the previous equality in  (\ref{36}) and use the definition of the oscillatory integrals, integration by parts with respect to $y$ and Lebesgues theorem afterwards, we get
  \begin{align*}  
   T_{\e} u(x) 
    &= \int t_{\e}(x,z) \left[  \osint e^{i(z-y) \cdot \xi} u(y) g_y(z) dy \dq \xi \right] dz \\ 
    &= \lim_{\alpha \rightarrow 0} \int t_{\e}(x,z) \cdot \iint e^{-iy\cdot \xi}  e^{iz\cdot \xi} u(y) g_y(z) \chi(\alpha y, \alpha \xi) dy \dq \xi \, dz \\ 
     &= \lim_{\alpha \rightarrow 0}  \iint e^{-iy\cdot \xi} \chi(\alpha y, \alpha \xi) \left[T_{\e}( e_{\xi}  g_y)(x)\right]  u(y) dy \dq \xi,
  \end{align*}
    where $\chi \in \snn$ with $\chi(0,0)=1$.   
  Defining $p_{\e,0}: \RnRnRn \rightarrow \C$ by 
  $$p_{\e,0}(x,\xi,y) := e^{-ix\cdot \xi} T_{\e}( e_{\xi}  g_y)(x) \qquad \text{ for all } x, \xi, y \in \Rn,$$ 
  we conclude 
  \begin{align*}
    T_{\e} u(x) 
    = \osint e^{i(x-y)\cdot \xi} p_{\e,0}(x,\xi,y)  u(y) dy \dq \xi.
  \end{align*}
  Here $p_{\e,0}$ is the double symbol of $T_{\e}$, cf.\;Lemma \ref{lemma:PseudosMitDoppelsymbol}, as we will see in step two.
  \\

  Secondly we want to construct for all $0< \e \leq 1$ symbols $p_{\e} \in C^{\tau} \Snn{0}{0}{0}{\tilde{M}}$, with 
  \begin{itemize}
    \item[i)] $T_{\e} u = p_{\e}(x,D_x) u$ for all $u \in \s$,
    \item[ii)] $( p_{\e})_{ 0< \e \leq 1 }$ is a bounded sequence of $ C^{\tau} \Snn{0}{0}{0}{\tilde{M}}$.
  \end{itemize}
  Since $T_{\e}: \sd \rightarrow \s$ is linear and continuous
  and because of Proposition \ref{prop:SmoothnessOfPepsilon0},
  we can apply Lemma \ref{lemma:AbleitungVonp} and Lemma \ref{lemma:ContinuityOfIteratedCommutatorOfTeFromLqToLq}
  and get for $\alpha, \gamma \in \Non$ with $|\alpha| \leq M$:
  \begin{align*}
    &\left\|  \pa{\alpha} D_y^{\gamma} p_{\e,0}(.,\xi,y) \right\|^q_{ C^{\tau} } \leq \left\| \pa{\alpha} D_y^{\gamma} p_{\e,0}(.,\xi,y) \right\|^q_{ H^m_q } 
      \leq \sum_{ |\beta| \leq m } \left\| \pa{\alpha} D_x^{\beta} D_y^{\gamma} p_{\e,0}(x,\xi,y) \right\|^q_{ L^q(\Rn_x) } \\
    & \qquad \leq \sum_{ |\beta| \leq m } \sum_{ \beta_1 + \beta_2 =\beta} \left\| C_{\beta_1,\beta_2} \left[ \ad(-ix)^{\alpha} \ad(D_x)^{\beta_1} T_{\e} \right]\left(e^{ix \cdot \xi} D_x^{\beta_2 + \gamma} g_y \right) (x) \right\|^q_{ L^q(\Rn_x) }
    \leq C_{\alpha,m,\gamma}
  \end{align*}
  for all $\xi, y \in \Rn$ and $0< \e \leq 1$. 
  Hence $\{ p_{\e,0} : 0< \e \leq 1\} $ is a bounded subset of $C^{\tau} S^{0}_{0,0}(\RnRn \times \Rn;M)$. Now we define 
  \begin{align*}
    p_{\e} (x,\xi) := \osint e^{-iy \cdot \eta} p_{\e,0}(x,\xi + \eta,x+y) dy \dq \eta \qquad \text{for all } x,\xi \in \Rn.
  \end{align*}
  An application of Theorem \ref{thm:SymbolReduktionNichtGlatt} and Theorem \ref{thm:aLInCsSm00} yields the properties i) and ii).  
  So we can turn to step three now.\\ 

  On account of ii) it is possible to apply Lemma \ref{thm:pointwiseConvergence} which yields the existence of a subsequence $( p_{\e_k} )_{k \in \N}$ of $( p_{\e})_{0 < \e \leq 1}$ with $\e_k \rightarrow 0$ if $k \rightarrow \infty$ such that 
  \begin{align}\label{39}
    p_{\e_k} \xrightarrow[]{k \rightarrow \infty} p \qquad \text{pointwise},
  \end{align}
  where $p \in C^{\tau} \Snn{0}{0}{0}{ \tilde{M}-1 }$. Let $u \in \s$  be arbitrary.  
  Because of  (\ref{39})  and the boundedness of $( p_{\e_k} )_{k \in \N} \subseteq  C^{\tau} \Snn{0}{0}{0}{\tilde{M}}$, we get
  \begin{align}\label{41}
    p_{\e_k}(x,D_x) &u 
    \xrightarrow[]{k \rightarrow \infty} 
      p(x,D_x) u
  \end{align} 
  pointwise due to Lebesgue's theorem.
  Choosing $N \in \N$ with $n< 2N \leq M$
  we get by Lemma \ref{lemma:AbschatzungDoppelSymbol}: 
  \begin{align*}
    |p_{\e_k}(x,D_x)u(x) - p(x,D_x)u(x)|^q &\leq \left( |p_{\e_k}(x,D_x)u(x)| + \lim_{k \rightarrow \infty}|p_{\e_k}(x,D_x)u(x)| \right)^q \\
    &\leq C_{N,n} \<{x}^{-2Nq} \in L^1(\Rn_x)
  \end{align*}
  for all $k \in \N$. 
  Together with (\ref{41}) 
  we can apply of Lebesgue's theorem and obtain
  \begin{align*}
    &\| p_{\e_k}(x,D_x)u - p(x,D_x)u \|^q_{L^q(\Rn)} 
    = \intr | p_{\e_k}(x,D_x)u(x) - p(x,D_x)u(x) |^q dx  \xrightarrow[]{k \rightarrow \infty}  0
  \end{align*}
  Together with i) and Lemma \ref{lemma:LqLimitOfTepsilonu} we conclude
  \begin{align*}
    p(x,D_x)u = L^q-\lim_{k \rightarrow \infty} p_{\e_k}(x,D_x)u = L^q-\lim_{k \rightarrow \infty} T_{\e_k} u = Tu. 
  \end{align*}
\end{proof}

By means of order reducing operators we can extend the previous characterization to 
the class $C^s S^m_{0,0}$ for general $m$:

\begin{lemma} \label{lemma:classA00}
  Let $m\in \R$, $1<q<\infty$, $\tilde{m} \in \N_0$ with $\tilde{m}>n/q$. Additionally let $M \in \N_0 \cup \{ \infty \}$ with $M >n$. We define $\tilde{M}:=M-(n+1)$. Considering  $T \in \mathcal{A}^{m,M}_{0,0}(\tilde{m},q)$ and $\tilde{M} \geq 1$ we have for $s \in (0, \tilde{m}-n/q]$ with $s\notin \N_0$:
  $$T \in \op C^s S^m_{0,0}(\RnRn; \tilde{M}-1) \cap \mathscr{L}(H^m_q(\Rn),L^q(\Rn)).$$
\end{lemma}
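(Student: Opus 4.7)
The plan is to reduce to the case $m = 0$ via composition with an order-reducing operator. Set $S := T\Lambda^{-m}$. Since $\Lambda^{-m} \in \op S^{-m}_{1,0}(\RnRn)$, Theorem \ref{thm:stetigInBesselPotRaum} gives $\Lambda^{-m}: L^q(\Rn) \to H^m_q(\Rn)$ continuously, so $S \in \mathscr{L}(L^q(\Rn))$. The first main task is to verify $S \in \mathcal{A}^{0,M}_{0,0}(\tilde m, q)$; Theorem \ref{thm:classA00} will then produce a symbol $\sigma \in C^s S^0_{0,0}(\RnRn; \tilde M - 1)$ with $S = \sigma(x, D_x)$, and a final composition argument will yield the symbol of $T$.

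For the commutator condition on $S$, I would use that each $\ad(-ix_j)$ and $\ad(D_{x_j})$ is a derivation on operator products, $\ad(X)(AB) = \ad(X)(A) B + A\, \ad(X)(B)$. Iterating this identity $l$ times and distributing the single commutators between $T$ and $\Lambda^{-m}$ yields an expansion
\begin{align*}
\adc{\alpha}{\beta}(T\Lambda^{-m}) = \sum_{\substack{\alpha'+\alpha''=\alpha\\ \beta'+\beta''=\beta}} c_{\alpha',\beta'}\, \widetilde A_{\alpha',\beta'}\, \widetilde B_{\alpha'',\beta''},
\end{align*}
where each $\widetilde A_{\alpha',\beta'}$ is an iterated commutator of $T$ of the form appearing in the definition of $\mathcal{A}^{m,M}_{0,0}(\tilde m, q)$ and each $\widetilde B_{\alpha'',\beta''}$ the corresponding iterated commutator of $\Lambda^{-m}$. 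By Remark \ref{bem:SymbolOfIteratedCommutator}, $\widetilde B_{\alpha'',\beta''} \in \op S^{-m-|\alpha''|}_{1,0}(\RnRn)$ (and in fact vanishes unless $\beta'' = 0$, since $\<{\xi}^{-m}$ is $x$-independent). Theorem \ref{thm:stetigInBesselPotRaum} therefore gives $\widetilde B_{\alpha'',\beta''}: L^q(\Rn) \to H^{m+|\alpha''|}_q(\Rn) \hookrightarrow H^m_q(\Rn)$, while the assumption on $T$ — with $\rho = 0$, so that $H^{m-\rho|\alpha'|}_q(\Rn) = H^m_q(\Rn)$ — gives $\widetilde A_{\alpha',\beta'}: H^m_q(\Rn) \to L^q(\Rn)$. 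Each summand is consequently in $\mathscr{L}(L^q(\Rn))$, and so is $\adc{\alpha}{\beta}S$.

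Applying Theorem \ref{thm:classA00} to $S$ supplies $\sigma \in C^s S^0_{0,0}(\RnRn; \tilde M - 1)$ with $S = \sigma(x,D_x)$. Since $\Lambda^m$ is a pure Fourier multiplier, the elementary identity $\op(a)\,\op(b(\xi)) = \op(a(x,\xi) b(\xi))$ — immediate from $\widehat{\Lambda^m u}(\xi) = \<{\xi}^m \hat u(\xi)$ and the definition of $\op(\sigma)$ on $\s$ — gives, for every $u \in \s$,
\begin{align*}
Tu = T\Lambda^{-m}\Lambda^m u = S\,\Lambda^m u = \op\bigl(\sigma(x,\xi)\<{\xi}^m\bigr)\, u.
\end{align*}
It remains to check $p(x,\xi) := \sigma(x,\xi)\<{\xi}^m \in C^s S^m_{0,0}(\RnRn; \tilde M - 1)$. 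The Leibniz rule, together with $|\pa{\alpha_2}\<{\xi}^m| \leq C\<{\xi}^{m-|\alpha_2|} \leq C\<{\xi}^m$, yields for $|\alpha| \leq \tilde M - 1$ and every $\xi \in \Rn$
\begin{align*}
\|\pa{\alpha} p(\cdot, \xi)\|_{C^s} \leq \sum_{\alpha_1 + \alpha_2 = \alpha} \binom{\alpha}{\alpha_1} \|\pa{\alpha_1}\sigma(\cdot,\xi)\|_{C^s}\, |\pa{\alpha_2}\<{\xi}^m| \leq C_\alpha \<{\xi}^m,
\end{align*}
and the continuity properties in $(x,\xi)$ follow from those of $\sigma$ and $\<{\xi}^m$. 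The boundedness $T \in \mathscr{L}(H^m_q(\Rn), L^q(\Rn))$ is built into the hypothesis $T \in \mathcal{A}^{m,M}_{0,0}(\tilde m, q)$. The principal obstacle I anticipate is the careful bookkeeping in the second step: tracking which multi-indices end up on which factor in the commutator expansion, so that the $\rho = 0$ hypothesis on $T$ and the mapping properties of $\Lambda^{-m}$ fit together correctly in each summand.
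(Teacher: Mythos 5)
Your proof is correct and follows essentially the same strategy as the paper's: reduce to order zero by considering $T\Lambda^{-m}$, expand the iterated commutators via the derivation property and the fact that $\Lambda^{-m}$ is a pure Fourier multiplier so that commutators with $D_{x_j}$ vanish, invoke Theorem~\ref{thm:classA00}, and compose with $\Lambda^m$. The one place you are more explicit than the paper is the final step: you actually exhibit the symbol $p(x,\xi) = \sigma(x,\xi)\<{\xi}^m$ and verify via Leibniz that it lies in $C^s S^m_{0,0}(\RnRn;\tilde M - 1)$, whereas the paper compresses this into a one-line appeal to $\Lambda^m \in \op S^m_{1,0}(\RnRn)$ and Theorem~\ref{thm:stetigInBesselPotRaum}; your spelled-out version is a welcome clarification, not a divergence.
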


\begin{proof}
  Let $ s \in ( 0, \tilde{m}-n/q ] $ with $s\notin \N_0$ and $\delta \in \N_0^n$. Due to  Remark \ref{bem:SymbolOfIteratedCommutator} 
  and Theorem \ref{thm:stetigInBesselPotRaum} we get that
  \begin{eqnarray} \label{13}
    \ad(-ix)^{\delta}\Lambda^{-m}: L^q(\Rn) \rightarrow H_q^{m+|\delta|}(\Rn) \subseteq H_q^{m}(\Rn) \textrm{ is continuous}.
  \end{eqnarray}
  Now let $l\in \N_0$, $\alpha_1,\ldots, \alpha_l \in \N_0^n$ and $\beta_1,\ldots, \beta_l \in \N_0^n$ such that $|\beta|\leq \tilde{m}$ and $|\alpha| \leq M$, where $\beta:= \beta_1 + \ldots + \beta_l$ and $\alpha:= \alpha_1 + \ldots + \alpha_l$. 
  Since $\ad(-ix)^{\tau_2}\ad(D_x)^{\delta} \Lambda^{-m} \equiv 0$ for every $\tau_2, \delta \in \Non$ with $|\delta| \neq 0$ due to Remark \ref{bem:SymbolOfIteratedCommutator}, we can iteratively show
  \begin{eqnarray*}
    \lefteqn{ \adc{\alpha}{\beta} (T\Lambda^{-m}) }\\
    &=& \hspace{-0,3cm} \sum_{
	  \substack{\gamma_1 + \delta_1 = \alpha_1 \vspace{-1.7mm} \\  \vdots \vspace{0.2mm} \\ \gamma_l + \delta_l = \alpha_l}
	 } 
	\hspace{-0,1cm} C_{\gamma_1,\ldots,\gamma_l} [\adc{\gamma}{\beta} T][\ad(-ix)^{\delta} \Lambda^{-m}],
  \end{eqnarray*}
  where $\delta$ is defined by $\delta:=\delta_1+ \ldots + \delta_l$. 
  Combining (\ref{13}) and $T \in \mathcal{A}^{m,M}_{0,0}(\tilde{m},q)$ we obtain the continuity of
  \begin{eqnarray*}
         \adc{\alpha}{\beta} (T\Lambda^{-m}) : L^q(\Rn) \rightarrow L^q(\Rn).
  \end{eqnarray*}
  Therefore $T\Lambda^{-m} \in \mathcal{A}^{0,M}_{0,0}(\tilde{m},q)$. If we use Theorem \ref{thm:classA00}, we get
  \begin{align*}
    T\Lambda^{-m} \in \op C^{s} S^0_{0,0}(\RnRn; \tilde{M}-1) \cap \mathscr{L}(L^q(\Rn)).
  \end{align*}
  On account of $\Lambda^{m} \in \op S^m_{1,0}(\RnRn)$ and Theorem \ref{thm:stetigInBesselPotRaum} we have 
  \begin{align*}
     T \in \op C^{s} S^m_{0,0}(\RnRn; \tilde{M}-1) \cap \mathscr{L}(H^m_q(\Rn),L^q(\Rn)).
  \end{align*}
  \vspace*{-1cm}

\end{proof}

\subsection[Characterization of Operators with Symbols in $C^s S^m_{1,0}$]{Characterization of Pseudodifferential Operators with Symbols in $C^s S^m_{1,0}$}\label{section:classificationA10}

In applications to partial nonlinear differential equations the pseudodifferential operators are often in the class $C^s S^m_{1,0}(\RnRn)$. As we have seen in Example \ref{bsp:ElementDerCharakterisierungsmenge}, these operators are elements of the set $\mathcal{A}^{m}_{1,0}(\lfloor s \rfloor,q)$ with $1 < q < \infty$. In the present subsection we show that operators belonging to the set $ \mathcal{A}^{m, M}_{1,0}(\tilde{m},q)$ for sufficiently large $\tilde{m}$ are non-smooth pseudodifferential operators of the order $m$ whose coefficients are in a Hölder space. As an ingredient we use $ \mathcal{A}^{m, M}_{1,0}(\tilde{m},q) \subseteq \mathcal{A}^{m, M}_{0,0}(\tilde{m},q)$. Consequently we may apply the characterization of the pseudodifferential operators of the class $C^s S^m_{0,0}(\RnRn, M)$ in order to obtain the following main result of this section:

\begin{thm}\label{thm:classificationA10}
  Let  $m\in \R$, $1<q<\infty$ and $\tilde{m} \in \N_0$ with $\tilde{m}>n/q$. Additionally let $M \in \N_0$ with $M>n$. We define $\tilde{M}:=M-(n+1)$. Assuming $P \in \mathcal{A}^{m, M}_{1,0}(\tilde{m},q)$ and $\tilde{M} \geq 1$, we obtain for all $ \tau \in \left(0,\tilde{m}-n/q \right]$ with $\tau \notin \N_0$: 
  $$P \in \op C^{\tau} S^m_{1,0}(\RnRn; \tilde{M}-1) \cap \mathscr{L}(H^m_q(\Rn),L^q(\Rn)).$$
\end{thm}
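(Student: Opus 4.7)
The plan is to combine Lemma \ref{lemma:setA} and Lemma \ref{lemma:classA00} in a bootstrap fashion, exploiting that for $\rho = 1$ the iterated $x$-commutators of $P$ satisfy stronger continuity properties than what the $\rho=0$ characterization already requires, and that by Remark \ref{bem:SymbolOfIteratedCommutator} these iterated commutators are pseudodifferential operators whose symbols are the corresponding $\xi$-derivatives of the symbol of $P$. Since $\mathcal{A}^{m,M}_{1,0}(\tilde{m},q) \subseteq \mathcal{A}^{m,M}_{0,0}(\tilde{m},q)$ by Lemma \ref{lemma:setA}, a direct application of Lemma \ref{lemma:classA00} to $P$ already produces a symbol $p \in C^{\tau} S^{m}_{0,0}(\RnRn; \tilde{M}-1)$ with $P = p(x,D_x) \in \mathscr{L}(H_q^m(\Rn), L^q(\Rn))$. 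The regularity and continuity conditions required for $p$ to belong to $C^\tau S^m_{1,0}(\RnRn; \tilde{M}-1)$ are already satisfied; only the improved $\xi$-decay
\begin{equation*}
\|\pa{\alpha} p(\cdot, \xi)\|_{C^\tau(\Rn)} \leq C_\alpha \<{\xi}^{m-|\alpha|}, \qquad \xi \in \Rn,\; |\alpha| \leq \tilde{M}-1,
\end{equation*}
has to be extracted.

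For this, I would fix $\alpha \in \Non$ with $|\alpha| \leq \tilde{M}-1$ and consider $P_\alpha := \ad(-ix)^\alpha P$. Because $[-ix_j, D_{x_k}] = \delta_{jk}\,\mathrm{Id}$ is scalar, the Jacobi identity gives $[\ad(-ix_j),\ad(D_{x_k})]=0$, so every iterated commutator of $P_\alpha$ of $x$-order $\alpha'$ and $D_x$-order $\beta'$ with $|\alpha|+|\alpha'|\leq M$, $|\beta'|\leq \tilde{m}$ coincides with $\ad(-ix)^{\alpha+\alpha'}\ad(D_x)^{\beta'}P$, which maps $H^{m-|\alpha|-|\alpha'|}_q(\Rn) \to L^q(\Rn)$ continuously by hypothesis. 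Hence $P_\alpha \in \mathcal{A}^{m-|\alpha|, M-|\alpha|}_{1,0}(\tilde{m},q) \subseteq \mathcal{A}^{m-|\alpha|, M-|\alpha|}_{0,0}(\tilde{m},q)$. Since $|\alpha| \leq \tilde{M}-1 = M-n-2$, the prerequisites of Lemma \ref{lemma:classA00} hold at the shifted order, yielding $P_\alpha = q_\alpha(x,D_x)$ for some $q_\alpha \in C^\tau S^{m-|\alpha|}_{0,0}(\RnRn; \tilde{M}-|\alpha|-1)$.

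On the other hand, Remark \ref{bem:SymbolOfIteratedCommutator} applied to $p \in C^\tau S^m_{0,0}(\RnRn; \tilde{M}-1)$ shows $P_\alpha = (\pa{\alpha} p)(x,D_x)$ (this is where the range $|\alpha| \leq \tilde{M}-1$ enters, ensuring $\pa{\alpha} p$ exists as a function). By the uniqueness of symbols, which follows from the fact that the Schwartz kernel of $r(x,D_x)$ determines $r$ via partial Fourier transform in its second variable, we conclude $q_\alpha = \pa{\alpha} p$. Varying $\alpha$ over $\{|\alpha|\leq \tilde{M}-1\}$ and combining the estimates $\|q_\alpha(\cdot,\xi)\|_{C^\tau} \leq C_\alpha \<{\xi}^{m-|\alpha|}$ furnishes exactly the decay required for $p \in C^\tau S^m_{1,0}(\RnRn; \tilde{M}-1)$, and the continuity statement $P \in \mathscr{L}(H_q^m,L^q)$ has already been inherited from the first step.

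The principal obstacle is the bookkeeping: checking that $\ad(-ix)$-type and $\ad(D_x)$-type commutators genuinely commute (so that iterated commutators of $P_\alpha$ can be re-expressed as iterated commutators of $P$), verifying that $M-|\alpha|$ and $\tilde{M}-|\alpha|$ remain in the range to which Lemma \ref{lemma:classA00} applies, and identifying the two a priori distinct symbols $q_\alpha$ and $\pa{\alpha} p$ via uniqueness. Once these identifications are in place, the theorem is a clean reduction to the already-established $C^{\tau} S^{m}_{0,0}$ characterization applied simultaneously to all admissible $x$-commutators of $P$.
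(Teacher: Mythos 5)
Your proof is correct and follows essentially the same route as the paper's: reduce to the $\rho=0$ case via Lemma \ref{lemma:setA}, invoke Lemma \ref{lemma:classA00} once for $P$ itself and once for each $\ad(-ix)^{\alpha}P$ with $|\alpha|\le\tilde{M}-1$, and identify the resulting symbol with $\pa{\alpha}p$ via Remark \ref{bem:SymbolOfIteratedCommutator} to obtain the improved $\xi$-decay. You merely make explicit two points the paper treats tacitly, namely the commutation of $\ad(-ix_j)$ with $\ad(D_{x_k})$ needed to verify $\ad(-ix)^{\alpha}P\in\mathcal{A}^{m-|\alpha|,M-|\alpha|}_{1,0}(\tilde{m},q)$, and the uniqueness of the symbol used to conclude $q_\alpha=\pa{\alpha}p$.
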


\begin{proof}
  Let $\tilde{m}- n/q \geq \tau > 0$ with $\tau \notin \N_0$ and $P \in \mathcal{A}^{m, M}_{1,0}(\tilde{m},q)$. Due to Lemma \ref{lemma:setA} we have
    $P \in \mathcal{A}^{m, M}_{1,0}(\tilde{m},q) \subseteq \mathcal{A}^{m, M}_{0,0}(\tilde{m},q).$
  Hence we get by means of Lemma \ref{lemma:classA00}:
  \begin{align*}
    P \in \op C^{\tau} S^m_{0,0}(\RnRn; \tilde{M}-1) \cap \mathscr{L}(H^m_q(\Rn),L^q(\Rn)).
  \end{align*} 
  Let $\alpha \in \N_0^n$ with $|\alpha| \leq \tilde{M}-1$. 
  Then $\ad(-ix)^{\alpha} P \in \mathcal{A}^{m-|\alpha|, M-|\alpha|}_{1,0}(\tilde{m},q) $. Because of Lemma \ref{lemma:setA} and Lemma \ref{lemma:classA00},
  we obtain
   \begin{align*}
     \ad(-ix)^{\alpha} P \in \op C^{\tau} S^{m-|\alpha|}_{0,0}(\RnRn; \tilde{M}-|\alpha|-1).
  \end{align*}
  Due to Remark \ref{bem:SymbolOfIteratedCommutatorNonSmooth} the symbol of $\ad(-ix)^{\alpha} P$ is $\pa{\alpha} p(x,\xi)$ if $p$ is the symbol of $P$. Hence
  \begin{align*}
    \| \pa{\alpha} p(.,\xi) \|_{  C^{\tau} (\Rn) } \leq C_{\alpha} \<{\xi}^{m-|\alpha|} \qquad \textrm{for all } \xi \in \R^n.
  \end{align*}
  Consequently $p$ is an element of $C^{\tau} S^{m}_{1,0}(\RnRn; \tilde{M}-1)$.
\end{proof}

In the case $\tilde{M}-1>\max\{n/2, n/q\}$, $1<q< \infty$, every pseudodifferential operator whose symbol is in the class $C^{\tau} S^m_{1,0}(\RnRn; \tilde{M}-1)$, where $\tau>0$ and $m \in \R$, is an element of $\mathscr{L}(H^m_q(\Rn),L^q(\Rn))$ due to Theorem \ref{thm:stetigInHoelderRaum}. Therefore we have in this case
\begin{align*}
  \op C^{\tau} S^m_{1,0}(\RnRn; \tilde{M}-1) \cap \mathscr{L}(H^m_q(\Rn),L^q(\Rn)) = \op C^{\tau} S^m_{1,0}(\RnRn; \tilde{M}-1).
\end{align*}
\section{Composition of Pseudodifferential Operators}\label{section:AnwendungCaracterization}

A calculus for non-smooth pseudodifferential operators in the non-smooth symbol-class $C^{\tau} S^{m}_{1,\delta}(\R^{n}\times\R^{n})$ was first developed by Kumano-Go and Nagase in \cite{KumanoGoNagase}. Meyer and Marschall improved this calculus in \cite{Meyer} and \cite[Chapter\,\,6]{MarschallThesis}. Later Marschall adapted the arguments given there to obtain a calculus for the general case $C^{\tau}\Snn{m}{\rho}{\delta}{N}$ in \cite{Marschall}. Moreover a calculus for operator-valued pseudodifferential operators with non-smooth symbols of class $C^{\tau}\Snn{m}{1}{0}{\mathscr{L}(X_1,X_2)}$ was treated by A.\,in \cite{AbelsPseudoPaper}.

We recall that the composition of two smooth pseudodifferential operators is also a smooth pseudodifferential operator, cf.\;e.\;g.\;\cite[Theorem 3.16]{PDO}. 
But in contrast to the smooth case, the composition of two non-smooth pseudodifferential operators is in general not a pseudodifferential operator with the same regularity with respect to its coefficient, cf.\;\cite[p.1465]{AbelsPseudoPaper}. To illustrate this, let $p\in C^{\tau} S^{m}_{1,0}(\R^{n}_x \times\R^{n}_{\xi})$ with $\tau \in (0,1)$, $m \in \R$ and $p(x,\xi) \notin C^1(\Rn_x)$ for all $\xi \in \Rn$ and $j \in \{ 1, \ldots, n\}$.  
Then $p(x,D_x)D_{x_j}  =\op(p(x,\xi)\xi_j)$ is an element of $ \op C^{\tau} S^{m+1}_{1,0}(\R^{n}\times\R^{n})$. 
Therefore the question arises whether $\op (\xi_j)p(x,D_x)$ is also a pseudodifferential operator. If this would be the case, the iterated commutator 
\begin{align}
 \ad(D_{x_j})p(x,D_x)  = \op (\xi_j)p(x,D_x) - p(x,D_x) \op(\xi_j)= (\p_{x_j} p)(x,D_x)
\end{align}
would be a pseudodifferential operator, too. Hence $\p_{x_j} p \in C^0(\Rn)$ for all $j \in \{ 1, \ldots, n\}$ and $p(.,\xi) \in C^1(\Rn)$ for all $\xi \in \Rn$, which contradicts the assumptions.
Consequently $\ad(D_{x_j})p(x,D_x)$ is not a pseudodifferential operator
just like $\op(\xi_j)p(x,D_x)$.

With the characterization of non-smooth pseudodifferential operators at hand, we can prove a result for the composition of two non-smooth pseudodifferential operators:

\begin{thm}\label{thm:symbolComposition}
  Let $m_i \in \R$, $M_i \in \N \cup \{ \infty \}$ and $\rho_i \in \{0,1\}$ for $i \in \{ 1,2 \}$. Additionally let $0< \tau_i < 1$ and $\tilde{m}_i \in \N_0$ be such that $\tau_i + \tilde{m}_i > (1-\rho_i)n/2$ for $i \in \{ 1,2 \}$. We define $k_{i}:=(1-\rho_i)n/2$ for $i \in \{ 1,2\}$, $\rho:= \min\{\rho_1; \rho_2 \}$ and $m:= m_1+ m_2 + k_{1} + k_{2}$.   
  Moreover, let $\tilde{m}, M \in \N$ and $1< q < \infty$ be such that 
  \begin{itemize}
    \item[i)] $M \leq \min \left\{ M_i- \max \{ n/q; n/2 \}: i \in \{1,2\} \right\}$,
    \item[ii)] $n/q < \tilde{m} \leq \min\{ \tilde{m}_1 ; \tilde{m}_2\}$,
    \item[iii)] $\tilde{m} < \tilde{m}_2 + \tau_2  - m_1 - k_{1}$,
    \item[iv)] $\rho M + \tilde{m} < \tilde{m}_2 + \tau_2 + m_1  + k_{1}$,
    \item[v)] $\tilde{M} \geq 1$, where $\tilde{M}:= M-(n+1)$, 
    \item[vi)] $q=2$ in the case $(\rho_1, \rho_2) \neq (1,1)$.
  \end{itemize}
  Considering two symbols $p_i \in C^{\tilde{m}_i, \tau_i} \Snn{m_i}{\rho_i}{0}{M_i}$, $i \in \{1,2\}$, we obtain
  \begin{align*}
    p_1(x,D_x) p_2(x,D_x) \in \op C^{\tilde{m}-n/q} \Snn{m}{\rho}{0}{\tilde{M}-1}.
  \end{align*}
\end{thm}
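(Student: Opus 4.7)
The strategy is to verify that $P := p_1(x,D_x) p_2(x,D_x)$ belongs to $\mathcal{A}^{m,M}_{\rho,0}(\tilde{m},q)$ and then invoke the characterization result: Theorem~\ref{thm:classificationA10} if $\rho = 1$, Lemma~\ref{lemma:classA00} if $\rho = 0$ (hypothesis~(v) is what is needed to apply them). Since each $\ad(-ix_j)$ and $\ad(D_{x_j})$ is a derivation on operator products, iterating the Leibniz rule gives
\begin{align*}
\adc{\alpha}{\beta} P = \sum_{\substack{\alpha_1+\alpha_2=\alpha \\ \beta_1+\beta_2=\beta}} C_{\alpha_1,\alpha_2,\beta_1,\beta_2}\,[\adc{\alpha_1}{\beta_1} p_1(x,D_x)]\,[\adc{\alpha_2}{\beta_2} p_2(x,D_x)],
\end{align*}
and by Remark~\ref{bem:SymbolOfIteratedCommutatorNonSmooth} each factor is the pseudodifferential operator with symbol $\pa{\alpha_i} D_x^{\beta_i} p_i \in C^{\tilde{m}_i - |\beta_i|, \tau_i} S^{m_i - \rho_i|\alpha_i|}_{\rho_i,0}(\RnRn; M_i - |\alpha_i|)$, which coincides with $C^{\tau_i'}_{\ast} S^{m_i - \rho_i|\alpha_i|}_{\rho_i,0}$ for $\tau_i' := \tilde{m}_i + \tau_i - |\beta_i|$ (non-integer by $0<\tau_i<1$). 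Condition~(i) ensures the remaining symbol-smoothness $M_i - |\alpha_i|$ exceeds $\max\{n/q, n/2\}$, so the mapping theorems below apply.

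For each summand I would factor the desired boundedness $H^{m-\rho|\alpha|}_q(\Rn) \to L^q(\Rn)$ through an intermediate Bessel potential space $H^s_q(\Rn)$, with $s$ chosen in the interval
\begin{align*}
[L,U] := \bigl[\,m_1 + k_1 - \rho_1|\alpha_1|\,,\; m - \rho|\alpha| - m_2 + \rho_2|\alpha_2| - k_2\,\bigr].
\end{align*}
Here $L$ is the minimal source index from which $\adc{\alpha_1}{\beta_1} p_1(x,D_x)$ reaches $L^q(\Rn)$, and $U$ is the maximal target index that $\adc{\alpha_2}{\beta_2} p_2(x,D_x)$ achieves starting from $H^{m-\rho|\alpha|}_q(\Rn)$. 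The mapping properties at these endpoints are supplied by Theorem~\ref{thm:stetigInHoelderRaum} when $\rho_i = 1$ and by Theorem~\ref{thm:stetigInHoelderRaum00} when $\rho_i = 0$ (the latter relying on $q=2$ via hypothesis~(vi)); in both cases the Bessel index used must lie in the admissible window $(-\tau_i', \tau_i')$, and the target $0$ for the $p_1$-factor is trivially in $(-\tau_1', \tau_1')$. The identity $m = m_1 + m_2 + k_1 + k_2$ together with $\rho = \min(\rho_1, \rho_2)$ forces $L \le U$, and the shift constants $k_i = (1-\rho_i)n/2$ are precisely the order loss built into Theorem~\ref{thm:stetigInHoelderRaum00} when $\rho_i = 0$.

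I would then pick $s \in [L,U]$ closest to $0$ (taking $s = 0$ whenever $0 \in [L,U]$, otherwise the endpoint of smaller absolute value). A brief case analysis on the signs of $L$ and $U$ shows that hypothesis~(iii) supplies the upper estimate $s < \tau_2'$ in the cases $s > 0$, via $m_1 + k_1 < \tilde{m}_2 + \tau_2 - |\beta_2|$, while hypothesis~(iv) supplies $-s < \tau_2'$ in the cases $s < 0$; the $\rho M$ term in (iv) is essential exactly in the subcase $\rho_1 = \rho_2 = 1$, where $L = U = m_1 - |\alpha_1|$ becomes maximally negative for $|\alpha_1| = M$, whereas in all cases with $\rho = 0$ the bound $\tilde{m}$ alone closes the estimate thanks to the slack $|\alpha_i| \ge 0$. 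Summing over splits yields $\adc{\alpha}{\beta} P \in \mathscr{L}(H^{m-\rho|\alpha|}_q(\Rn), L^q(\Rn))$, hence $P \in \mathcal{A}^{m,M}_{\rho,0}(\tilde{m},q)$, and the characterization theorem concludes. The main obstacle is precisely the bookkeeping of the four parameters $|\alpha_i|, |\beta_i|$ against $\tau_i'$, $L$, $U$ across all splits — once the right intermediate $s$ is chosen in each case, the remaining work reduces to the mapping theorems.
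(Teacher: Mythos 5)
Your proposal is correct and follows essentially the same route as the paper: expand $\adc{\alpha}{\beta}[p_1(x,D_x)p_2(x,D_x)]$ by the Leibniz rule, apply Remark~\ref{bem:SymbolOfIteratedCommutatorNonSmooth} plus Theorems~\ref{thm:stetigInHoelderRaum} and~\ref{thm:stetigInHoelderRaum00} to each factor through an intermediate Bessel-potential space, conclude membership in $\mathcal{A}^{m,M}_{\rho,0}(\tilde{m},q)$, and then invoke Theorem~\ref{thm:classificationA10} (resp.\ Lemma~\ref{lemma:classA00}). The only minor difference is the explicit intermediate index: the paper fixes $s = m_1 + k_1 - \rho|\alpha_1|$ (which lies in your interval $[L,U]$ and satisfies $|s|<\tau_2'$ by exactly the same use of (iii) and (iv)), whereas you take the point of $[L,U]$ closest to $0$ — either choice works and leads to the same bookkeeping.
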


\begin{proof}
  Let $l \in \N$, $\varf{\tilde{\alpha}}{l} \in \Non$ and $\varf{\tilde{\beta}}{l} \in \Non$ with $|\tilde{\alpha}| \leq M$, $|\tilde{\beta}| \leq \tilde{m}$ and $|\tilde{\alpha}_1| + |\tilde{\beta}_1| = \ldots = |\tilde{\alpha}_l| + |\tilde{\beta}_l| = 1$ be arbitrary. Here $\tilde{\alpha}:= \vara{\tilde{\alpha}}{l}$ and $\tilde{\beta}:= \vara{\tilde{\beta}}{l}$. Due to Remark \ref{bem:SymbolOfIteratedCommutatorNonSmooth}, $i)$ and $ii)$ we know that
  \begin{align*}
    \ad(-ix)^{\tilde{\alpha}_l} \ad(D_x)^{\tilde{\beta}_l} \ldots \ad(-ix)^{\tilde{\alpha}_1} \ad(D_x)^{\tilde{\beta}_1} p_i(x,D_x)
  \end{align*}
  is a pseudodifferential operator with symbol 
  $$\pa{\tilde{\alpha}} D_x^{\tilde{\beta}} p_i \in C^{\tilde{m}_i-|\tilde{\beta}|, \tau_i} \Snn{m_i-\rho_i |\tilde{\alpha}|}{\rho_i}{0}{M_i-|\tilde{\alpha}|}$$
  for $i \in \{1,2\}$. Since $ 0 < \tilde{m}_1 - |\tilde{\beta}| + \tau_1 $ because of $ii)$, an application of Theorem \ref{thm:stetigInHoelderRaum} if $\rho_1=1$ and Theorem \ref{thm:stetigInHoelderRaum00} else provides for all elements $u$ of $ H^{m_1-\rho|\tilde{\alpha}|+ k_{1}}_q(\Rn)$:
  \begin{align}\label{eq30}
    \| \ad(-ix)^{\tilde{\alpha}_l} \ad(D_x)^{\tilde{\beta}_l} \ldots &\ad(-ix)^{\tilde{\alpha}_1} \ad(D_x)^{\tilde{\beta}_1} p_1(x,D_x) u\|_{L^q} \leq C \|u\|_{H^{m_1-\rho_1|\tilde{\alpha}|+ k_{1}}_q} \notag \\
    & \qquad \qquad \leq C \|u\|_{H^{m_1-\rho|\tilde{\alpha}|+ k_{1}}_q}.
  \end{align}
  Now let $k \in \N_0$ with $k \leq M$ be arbitrary. Making use of estimate $iii)$ and $iv)$
  we are able to verify that the assumptions of 
  Theorem \ref{thm:stetigInHoelderRaum} hold. An application of Theorem \ref{thm:stetigInHoelderRaum} in the case $\rho_1=1$ and Theorem \ref{thm:stetigInHoelderRaum00} else yields for all $u \in H^{m-\rho (k+|\tilde{\alpha}|)}_q(\Rn)$:
  \begin{align}\label{eq31}
    \| \ad(-ix)^{\tilde{\alpha}_l} \ad(D_x)^{\tilde{\beta}_l} \ldots &\ad(-ix)^{\tilde{\alpha}_1} \ad(D_x)^{\tilde{\beta}_1} p_2(x,D_x) u\|_{H^{m_1-\rho k+ k_{1}}_q} \notag \\
    &\leq C \|u\|_{H^{m-\rho k - \rho_2 |\tilde{\alpha}|}_q} 
    \leq C \|u\|_{H^{m-\rho (|\tilde{\alpha}| + k)}_q}.
  \end{align}
  We assume arbitrary $l \in \N$, $\varf{\alpha}{l} \in \Non$ and $\varf{\beta}{l} \in \Non$ with $|\alpha| \leq M$, $|\beta| \leq \tilde{m}$ and $|\alpha_1| + |\beta_1| = \ldots = |\alpha_l| + |\beta_l| = 1$. Here $\alpha:= \vara{\alpha}{l}$ and $\beta:= \vara{\beta}{l}$. Using (\ref{eq30}) and (\ref{eq31}) we obtain iteratively
  \begin{align*}
    &\left\| \ad(-ix)^{\alpha_l} \ad(D_x)^{\beta_l} \ldots \ad(-ix)^{\alpha_1} \ad(D_x)^{\beta_1} [ p_1(x,D_x) p_2(x,D_x) ] u \right\|_{L^q} \\
    &\qquad \leq C \sum_{ \substack{ \tilde{\alpha}_j + \gamma_j = \alpha_j \\ \tilde{\beta}_j + \delta_j = \beta_j} }
      \| [\ad(-ix)^{\tilde{\alpha}_l} \ad(D_x)^{\tilde{\beta}_l} \ldots \ad(-ix)^{\tilde{\alpha}_1} \ad(D_x)^{\tilde{\beta}_1} p_1(x,D_x)] \\
      &\qquad \qquad \qquad \quad \qquad \left. [\ad(-ix)^{\gamma_l} \ad(D_x)^{\delta_l} \ldots \ad(-ix)^{\gamma_1} \ad(D_x)^{\delta_1} p_2(x,D_x)]u \right\|_{L^q} \\
    &\qquad \leq C \|u\|_{H^{m-\rho |\alpha|}_q} \qquad \text{for all } u \in H^{m-\rho |\alpha|}_q(\Rn).
  \end{align*}
  Consequently $p_1(x,D_x) p_2(x,D_x) \in \mathcal{A}^{m,M}_{\rho,0}(\tilde{m},q)$. Due to $ii)$ and $v)$ we can apply Theorem \ref{thm:classificationA10}in the case $\rho=1$ and Lemma \ref{lemma:classA00} if $\rho=0$ and we get 
  the claim.
\end{proof}


\begin{thebibliography}{10}

\bibitem{AbelsPseudoPaper}
H.~Abels.
\newblock Pseudodifferential boundary value problems with non-smooth
  coefficients.
\newblock {\em Comm. Partial Differential Equations}, 30(10-12):1463--1503,
  2005.

\bibitem{PDO}
H.~Abels.
\newblock {\em Pseudodifferential and singular integral operators: an
  introduction with applications}.
\newblock De Gruyter, Berlin/Boston, 2012.

\bibitem{Amann2}
H.~Amann.
\newblock {\em Vector-Valued Distributions and Fourier Multipliers [online]}.
\newblock available in internet:
  http://user.math.uzh.ch/amann/files/distributions.pdf [29.02.2012, 13:15].

\bibitem{Beals}
R.~Beals.
\newblock Characterization of pseudodifferential operators and applications.
\newblock {\em Duke Math. J.}, 44(1):45--57, 1977.

\bibitem{Beals3}
R.~Beals.
\newblock Weighted distribution spaces and pseudodifferential operators.
\newblock {\em J. Analyse Math.}, 39:131--187, 1981.

\bibitem{CoifmanMeyer}
R.R. {Coifman} and Y.~{Meyer}.
\newblock {Au dela des op\'erateurs pseudo-diff\'erentiels.}
\newblock {Asterisque 57}, 1978.

\bibitem{Cordes1}
H.O. Cordes.
\newblock On pseudodifferential operators and smoothness of special {L}ie-group
  representations.
\newblock {\em Manuscripta Math.}, 28(1-3):51--69, 1979.

\bibitem{Cordes2}
H.O. {Cordes}.
\newblock {On some $C\sp *$-algebras and Fr\'eche$t\sp *$-algebras of
  pseudodifferential operators.}
\newblock {Pseudodifferential operators and applications, Proc. Symp., Notre
  Dame/Indiana 1984, Proc. Symp. Pure Math. 43, 79-104}, 1985.

\bibitem{Kryakvin}
V.D. Kryakvin.
\newblock {Characterization of pseudodifferential operators in H\"older-Zygmund
  spaces.}
\newblock {\em Differ. Equ.}, 49(3):306--312, 2013.

\bibitem{KumanoGo}
H.~Kumano-Go.
\newblock {\em Pseudo-Differential Operators}.
\newblock MIT Press, Cambridge, Massachusetts, and London, 1974.

\bibitem{KumanoGoNagase}
H.~{Kumano-Go} and M.~{Nagase}.
\newblock {Pseudo-differential operators with non-regular symbols and
  applications.}
\newblock {\em {Funkc. Ekvacioj, Ser. Int.}}, 21:151--192, 1978.

\bibitem{Koeppl}
D.~KÃ¶ppl.
\newblock {\em Pseudodifferentialoperatoren mit nichtglatten Koeffizienten auf
  Mannigfaltigkeiten}.
\newblock Diploma thesis, University Regensburg, 2011.

\bibitem{SchroheLeopold1993}
H.-G. {Leopold} and E.~{Schrohe}.
\newblock {Spectral invariance for algebras of pseudodifferential operators on
  Besov-Triebel-Lizorkin spaces.}
\newblock {\em {Manuscr. Math.}}, 78(1):99--110, 1993.

\bibitem{MarschallThesis}
J.~{Marschall}.
\newblock {\em {Pseudo-differential operators with non regular symbols.}}
\newblock Berlin: Freie Univ. Berlin, FB Math., 1985.

\bibitem{Marschall}
J.~Marschall.
\newblock Pseudodifferential operators with nonregular symbols of the class
  {$S^m_{\rho\delta}$}.
\newblock {\em Comm. Partial Differential Equations}, 12(8):921--965, 1987.

\bibitem{Meyer}
Y.~{Meyer}.
\newblock {Remarques sur un th\'eor\`eme de J. M. Bony.}
\newblock {\em {Suppl. Rend. Circ. Mat. Palermo (2)}}, 1:1--20, 1981.

\bibitem{Diss}
C.~Pfeuffer.
\newblock Characterization of non-smooth pseudodifferential operators.
\newblock {urn:nbn:de:bvb:355-epub-31776. Verf\"ugbar im Internet:
  http://epub.uni-regensburg.de/31776/ [23.06.2015]}, Mai 2015.

\bibitem{Rudin}
W.~{Rudin}.
\newblock {Functional analysis.}
\newblock {McGraw-Hill Series in Higher Mathematics. New York etc.: McGraw-Hill
  Book Comp. XIII}, 1973.

\bibitem{Schrohe1990}
E.~{Schrohe}.
\newblock {Boundedness and spectral invariance for standard pseudodifferential
  operators on anisotropically weighted $L\sp p$-Sobolev spaces.}
\newblock {\em {Integral Equations Oper. Theory}}, 13(2):271--284, 1990.

\bibitem{Taylor2}
M.E. Taylor.
\newblock {\em Pseudodifferential Operators and Nonlinear PDE}.
\newblock BirkhÃ¤user Boston, Basel, Berlin, 1991.

\bibitem{Treves}
F.~Treves.
\newblock {\em Topological Vector Spaces and Distributions and Kernels}.
\newblock Academic Press New York, San Francisco, London, 1967.

\bibitem{Ueberberg}
J.~Ueberberg.
\newblock Zur {S}pektralinvarianz von {A}lgebren von
  {P}seudodifferentialoperatoren in der {$L^p$}-{T}heorie.
\newblock {\em Manuscripta Math.}, 61(4):459--475, 1988.

\end{thebibliography}

\end{document}